\title[Hochschild homology of nc symmetric quotient stack]{The Hochschild homology of a noncommutative symmetric quotient stack}
\author{Rina Anno}
\email{ranno@math.ksu.edu}
\address{Department of Mathematics \\
Kansas State University \\
138 Cardwell Hall \\
Manhattan, KS 66506\\
USA}
\author{Vladimir Baranovsky}
\email{vbaranov@math.uci.edu}
\address{Department of Mathematics \\
UC Irvine \\
Irvine, CA 92697-3875 \\
USA}
\author{Timothy Logvinenko} 
\email{LogvinenkoT@cardiff.ac.uk} 
\address{School of Mathematics\\ 
Cardiff University\\
Senghennydd Road,\\
Cardiff, CF24 4AG\\
UK}
\let\amsamp=&
\gdef\smallampmatrix{%
  \begingroup
  \let&=\amsamp
  \begin{smallmatrix}%
}
\gdef\endsmallampmatrix{\end{smallmatrix}\endgroup}
\DeclareMathOperator{\chr}{char}
\DeclareMathOperator{\obj}{Ob}
\DeclareMathOperator{\homm}{Hom}
\DeclareMathOperator{\eend}{End}
\DeclareMathOperator{\picr}{Pic}
\DeclareMathOperator{\cl}{Cl}
\DeclareMathOperator{\spec}{Spec}
\DeclareMathOperator{\action}{act}
\DeclareMathOperator{\modd}{\bf Mod}
\DeclareMathOperator{\lder}{\bf L}
\DeclareMathOperator{\ldertimes}{\overset{\lder}{\otimes}}
\DeclareMathOperator{\id}{Id}
\DeclareMathOperator{\opp}{{opp}}
\DeclareMathOperator{\fg}{{\it fg}}
\DeclareMathOperator{\qrep}{\it \mathcal{Q}r}
\DeclareMathOperator{\hproj}{\mathcal{P}}
\DeclareMathOperator{\semifree}{\mathcal{S}\mathcal{F}}
\DeclareMathOperator{\sffg}{\mathcal{S}\mathcal{F}_{\fg}}
\DeclareMathOperator{\perf}{{\it \mathcal{P}erf}}
\DeclareMathOperator{\hperf}{{\it h\mathcal{P}erf}}
\DeclareMathOperator{\hmtpy}{{Ho}}
\DeclareMathOperator{\tria}{{Tria}}
\DeclareMathOperator{\pretriag}{{Pre\text{-}Tr}}
\DeclareMathOperator{\TPair}{{\bf TPair}}
\DeclareMathOperator{\alg}{{\bf Alg}}
\DeclareMathOperator{\forget}{{Forg}}
\DeclareMathOperator{\eilmoor}{{\mathcal{E}}}
\DeclareMathOperator{\coeilmoor}{{co\mathcal{E}}}
\DeclareMathOperator{\hochhom}{{HH}}
\DeclareMathOperator{\hochcx}{{HC}}
\DeclareMathOperator{\shfl}{{\mathrm{Sh}}} 
\DeclareMathOperator\Sym{Sym}
\DeclareMathOperator\ezmap{{EZ}}
\DeclareMathOperator\awmap{{AW}}
\DeclareMathOperator\Ind{{Ind}}     
\DeclareMathOperator\Res{{Res}}
\begin{document}

\def\bv{\mathbf{v}}
\def\kgc_{K^*_G(\mathbb{C}^n)}
\def\kgchi_{K^*_\chi(\mathbb{C}^n)}
\def\kgcf_{K_G(\mathbb{C}^n)}
\def\kgchif_{K_\chi(\mathbb{C}^n)}
\def\gpic_{G\text{-}\picr}
\def\gcl_{G\text{-}\cl}
\def\trch_{{\chi_{0}}}
\def\regring{{R}}
\def\regrep{{V_{\text{reg}}}}
\def\givrep{{V_{\text{giv}}}}
\def\lbar{{(\mathbb{Z}^n)^\vee}}
\def\genpx_{{p_X}}
\def\genpy_{{p_Y}}
\def\genpcn_{p_{\mathbb{C}^n}}
\def\gnat{gnat}
\def\twalg{{\regring \rtimes G}}
\def\L{{\mathcal{L}}}
\def\gcd{\mbox{gcd}}
\def\lcm{\mbox{lcm}}
\def\tf{{\tilde{f}}}
\def\tD{{\tilde{D}}}
\def\A{{\mathcal{A}}}
\def\B{{\mathcal{B}}}
\def\C{{\mathcal{C}}}
\def\D{{\mathcal{D}}}
\def\F{{\mathcal{F}}}
\def\H{{\mathcal{H}}}
\def\L{{\mathcal{L}}}
\def\M{{\mathcal{M}}}
\def\N{{\mathcal{N}}}
\def\R{{\mathcal{R}}}
\def\T{{\mathcal{T}}}
\def\RF{{\mathcal{R}\mathcal{F}}}
\def\barA{{\bar{\mathcal{A}}}}
\def\barAi{{\bar{\mathcal{A}}_1}}
\def\barAj{{\bar{\mathcal{A}}_2}}
\def\barB{{\bar{\mathcal{B}}}}
\def\barC{{\bar{\mathcal{C}}}}
\def\barD{{\bar{\mathcal{D}}}}
\def\barT{{\bar{\mathcal{T}}}}
\def\barM{{\bar{\mathcal{M}}}}
\def\Aopp{{\A^{\opp}}}
\def\Bopp{{\B^{\opp}}}
\def\Copp{{\C^{\opp}}}
\def\aA{\leftidx{_{a}}{\A}}
\def\bA{\leftidx{_{b}}{\A}}
\def\Aa{{\A_a}}
\def\Ea{E_a}
\def\aE{\leftidx{_{a}}{E}{}}
\def\Eb{E_b}
\def\bE{\leftidx{_{b}}{E}{}}
\def\Fa{F_a}
\def\aF{\leftidx{_{a}}{F}{}}
\def\Fb{F_b}
\def\bF{\leftidx{_{b}}{F}{}}
\def\aM{\leftidx{_{a}}{M}{}}
\def\aN{\leftidx{_{a}}{N}{}}
\def\aMb{\leftidx{_{a}}{M}{_{b}}}
\def\aNb{\leftidx{_{a}}{N}{_{b}}}
\def\rfRFa{\leftidx{_{\RF}}{\RF}{_{\A}}}
\def\aRFrf{\leftidx{_{\A}}{\RF}{_{\RF}}}
\def\biAMA{\leftidx{_{\A}}{M}{_{\A}}}
\def\biAMC{\leftidx{_{\A}}{M}{_{\C}}}
\def\biCMA{\leftidx{_{\C}}{M}{_{\A}}}
\def\biCMC{\leftidx{_{\C}}{M}{_{\C}}}
\def\biALA{\leftidx{_{\A}}{L}{_{\A}}}
\def\biALC{\leftidx{_{\A}}{L}{_{\C}}}
\def\biCLA{\leftidx{_{\C}}{L}{_{\A}}}
\def\biCLC{\leftidx{_{\C}}{L}{_{\C}}}
\def\Na{{N_a}}
\def\modk{{\modd\text{-}\mathbb{k}}}
\def\Amod{{\A\text{-}\modd}}
\def\modA{{\modd\text{-}\A}}
\def\modbar{{\overline{\modd}}}
\def\modbarA{{\overline{\modd}\text{-}\A}}
\def\modbarAopp{{\overline{\modd}\text{-}\Aopp}}
\def\modB{{\modd\text{-}\B}}
\def\modC{{\modd\text{-}\C}}
\def\modD{{\modd\text{-}\D}}
\def\modbarB{{\overline{\modd}\text{-}\B}}
\def\modbarC{{\overline{\modd}\text{-}\C}}
\def\modbarD{{\overline{\modd}\text{-}\D}}
\def\modbarBopp{{\overline{\modd}\text{-}\Bopp}}
\def\AmodA{{\A\text{-}\modd\text{-}\A}}
\def\AmodM{{\A\text{-}\modd\text{-}\M}}
\def\AmodB{{\A\text{-}\modd\text{-}\B}}
\def\AmodT{{\A\text{-}\modd\text{-}\T}}
\def\BmodB{{\B\text{-}\modd\text{-}\B}}
\def\BmodA{{\B\text{-}\modd\text{-}\A}}
\def\DmodD{{\D\text{-}\modd\text{-}\D}}
\def\MmodA{{\M\text{-}\modd\text{-}\A}}
\def\MmodM{{\M\text{-}\modd\text{-}\M}}
\def\TmodA{{\T\text{-}\modd\text{-}\A}}
\def\TmodT{{\T\text{-}\modd\text{-}\T}}
\def\AmodbarA{\A\text{-}{\overline{\modd}\text{-}\A}}
\def\AmodbarB{\A\text{-}{\overline{\modd}\text{-}\B}}
\def\AmodbarC{\A\text{-}{\overline{\modd}\text{-}\C}}
\def\AmodbarD{\A\text{-}{\overline{\modd}\text{-}\D}}
\def\AmodbarM{\A\text{-}{\overline{\modd}\text{-}\M}}
\def\AmodbarT{\A\text{-}{\overline{\modd}\text{-}\T}}
\def\BmodbarA{\B\text{-}{\overline{\modd}\text{-}\A}}
\def\BmodbarB{\B\text{-}{\overline{\modd}\text{-}\B}}
\def\BmodbarC{\B\text{-}{\overline{\modd}\text{-}\C}}
\def\BmodbarD{\B\text{-}{\overline{\modd}\text{-}\D}}
\def\CmodbarA{\C\text{-}{\overline{\modd}\text{-}\A}}
\def\CmodbarB{\C\text{-}{\overline{\modd}\text{-}\B}}
\def\CmodbarC{\C\text{-}{\overline{\modd}\text{-}\C}}
\def\CmodbarD{\C\text{-}{\overline{\modd}\text{-}\D}}
\def\DmodbarA{\D\text{-}{\overline{\modd}\text{-}\A}}
\def\DmodbarB{\D\text{-}{\overline{\modd}\text{-}\B}}
\def\DmodbarC{\D\text{-}{\overline{\modd}\text{-}\C}}
\def\DmodbarD{\D\text{-}{\overline{\modd}\text{-}\D}}
\def\TmodbarA{\T\text{-}{\overline{\modd}\text{-}\A}}
\def\MmodbarA{\M\text{-}{\overline{\modd}\text{-}\A}}
\def\MmodbarM{\M\text{-}{\overline{\modd}\text{-}\M}}
\def\modbarT{{\overline{\modd}\text{-}T}}
\def\sfA{{\semifree(\A)}}
\def\sfB{{\semifree(\B)}}
\def\sffgA{{\sffg(\A)}}
\def\sffgB{{\sffg(\B)}}
\def\hprojA{{\hproj(\A)}}
\def\hprojB{{\hproj(\B)}}
\def\qrepA{{\qrep(\A)}}
\def\qrepB{{\qrep(\B)}}
\def\opp{{\text{opp}}}
\def\hperfA{{\hperf(\A)}}
\def\hperfB{{\hperf(\B)}}
\def\barperf{{\it\mathcal{P}\overline{er}f}}
\def\barperfA{{\barperf(\A)}}
\def\barperfB{{\barperf(\B)}}
\def\qrhpr{{\hproj^{qr}}}
\def\qrhprA{{\qrhpr(\A)}}
\def\qrhprB{{\qrhpr(\B)}}
\def\qrsf{{\semifree^{qr}}}
\def\qrsf{{\semifree^{qr}}}
\def\qrsfA{{\qrsf(\A)}}
\def\qrsfB{{\qrsf(\B)}}
\def\Aperfsf{{\semifree^{\A\text{-}\perf}(\AbimB)}}
\def\Bperfsf{{\semifree^{\B\text{-}\perf}(\AbimB)}}
\def\Aprfhpr{{\hproj^{\A\text{-}\perf}(\AbimB)}}
\def\Bprfhpr{{\hproj^{\B\text{-}\perf}(\AbimB)}}
\def\Aqrhpr{{\hproj^{\A\text{-}qr}(\AbimB)}}
\def\Bqrhpr{{\hproj^{\B\text{-}qr}(\AbimB)}}
\def\Aqrsf{{\semifree^{\A\text{-}qr}(\AbimB)}}
\def\Bqrsf{{\semifree^{\B\text{-}qr}(\AbimB)}}
\def\modAopp{{\modd\text{-}\Aopp}}
\def\modBopp{{\modd\text{-}\Bopp}}
\def\AmodA{{\A\text{-}\modd\text{-}\A}}
\def\AmodB{{\A\text{-}\modd\text{-}\B}}
\def\AmodC{{\A\text{-}\modd\text{-}\C}}
\def\BmodA{{\B\text{-}\modd\text{-}\A}}
\def\BmodB{{\B\text{-}\modd\text{-}\B}}
\def\BmodC{{\B\text{-}\modd\text{-}\C}}
\def\CmodA{{\C\text{-}\modd\text{-}\A}}
\def\CmodB{{\C\text{-}\modd\text{-}\B}}
\def\CmodC{{\C\text{-}\modd\text{-}\C}}
\def\AbimA{{\A\text{-}\A}}
\def\AbimC{{\A\text{-}\C}}
\def\AbimM{{\A\text{-}\M}}
\def\BbimA{{\B\text{-}\A}}
\def\BbimB{{\B\text{-}\B}}
\def\BbimC{{\B\text{-}\C}}
\def\BbimD{{\B\text{-}\D}}
\def\CbimA{{\C\text{-}\A}}
\def\CbimB{{\C\text{-}\B}}
\def\CbimC{{\C\text{-}\C}}
\def\DbimA{{\D\text{-}\A}}
\def\DbimB{{\D\text{-}\B}}
\def\DbimC{{\D\text{-}\C}}
\def\DbimD{{\D\text{-}\D}}
\def\MbimA{{\M\text{-}\A}}
\def\AhprA{{\hproj\left(\AbimA\right)}}
\def\BhprB{{\hproj\left(\BbimB\right)}}
\def\AhprB{{\hproj\left(\AbimB\right)}}
\def\BhprA{{\hproj\left(\BbimA\right)}}
\def\AbarA{{\overline{\A\text{-}\A}}}
\def\AbarB{{\overline{\A\text{-}\B}}}
\def\BbarA{{\overline{\B\text{-}\A}}}
\def\BbarB{{\overline{\B\text{-}\B}}}
\def\QAbimB{{Q\A\text{-}\B}}
\def\AbimB{{\A\text{-}\B}}
\def\AbimC{{\A\text{-}\C}}
\def\AonebimB{{\A_1\text{-}\B}}
\def\AtwobimB{{\A_2\text{-}\B}}
\def\BbimA{{\B\text{-}\A}}
\def\Aperf{{\A\text{-}\perf}}
\def\Bperf{{\B\text{-}\perf}}
\def\MddA{{M^{\tilde{\A}}}}
\def\MddB{{M^{\tilde{\B}}}}
\def\MhdA{{M^{h\A}}}
\def\MhdB{{M^{h\B}}}
\def\NhdB{{N^{h\B}}}
\def\Ainfty{{A_{\infty}}}
\def\Cat{{\it \mathcal{C}at}}
\def\twoCat{{\bicat{Cat}}}
\def\DGCat{{\bicat{DGCat}}}
\def\oneDGCat{{\bicat{DGCat}^{1}}}
\def\HoDGCat{{\hmtpy(\oneDGCat)}}
\def\HoDGCatV{{\hmtpy(\DGCat_\mathbb{V})}}
\def\MoDGCat{{Mo(\oneDGCat)}}
\def\EnhCat{{\bicat{EnhCat}}}                       
\def\EnhCatone{{\bicat{EnhCat}^1}}                  
\def\EnhCatCC{{\bicat{EnhCat}_{\mathrm{big}}}}                
\def\dgEnhCatCC{{\bicat{EnhCat}_{\mathrm{big}}^{\mathrm{dg}}}}                
\def\ainfEnhCatCC{{\bicat{EnhCat}_{\mathrm{big}}^{\Ainfty}}}                
\def\EnhCatKC{{\bicat{EnhCat}_{\mathrm{kc}}}}                
\def\dgEnhCatKC{{\bicat{EnhCat}_{\mathrm{kc}}^{\mathrm{dg}}}}         
\def\tr{{tr}}
\def\pretr{{pretr}}
\def\kctr{{kctr}}
\def\PreTrCat{{\DGCat^\pretr}}
\def\KcTrCat{{\DGCat^\kctr}}
\def\HoPretrCat{{\hmtpy(\PreTrCat)}}
\def\HoKcTrCat{{\hmtpy(\KcTrCat)}}
\def\Aquasirep{{\A\text{-}qr}}
\def\QAquasirep{{Q\A\text{-}qr}}
\def\Bquasirep{{\B\text{-}qr}} 
\def\lderA{{\tilde{\A}}} 
\def\lderB{{\tilde{\B}}} 
\def\adjunit{{\text{adj.unit}}}
\def\adjcounit{{\text{adj.counit}}}
\def\degzero{{\text{deg.0}}}
\def\degone{{\text{deg.1}}}
\def\degminusone{{\text{deg.-$1$}}}
\def\bareta{{\overline{\eta}}}
\def\barzeta{{\overline{\zeta}}}
\def\Ract{{R {\action}}}
\def\barRact{{\overline{\Ract}}}
\def\actL{{{\action} L}}
\def\baractL{{\overline{\actL}}}
\def\noddinf{{{\bf Nod}_{\infty}}}
\def\perfinf{{{\bf Perf}_{\infty}}}
\def\conodddgstrict{{{\bf coNod}_{dg}^{strict}}}
\def\conodddg{{{\bf coNod}_{dg}}}
\def\conodddghu{{{\bf coNod}_{dg}^{hu}}}
\def\conoddinf{{{\bf coNod}_{\infty}}}
\def\noddinfstr{{{\bf Nod}^{\text{strict}}_{\infty}}}
\def\noddinfA{{\noddinf\A}}
\def\noddinfB{{\noddinf\B}}
\def\perfinfA{{\perfinf\A}}
\def\perfinfB{{\perfinf\B}}
\def\noddinfAB{{\noddinf\AbimB}}
\def\noddinfBA{{\noddinf\BbimA}}
\def\noddinfu{{({\bf Nod}_{\infty})_u}}
\def\noddinfuA{{(\noddinfA)_u}}
\def\noddinfhu{{({\bf Nod}_{\infty})_{hu}}}
\def\noddinfhuA{{(\noddinfA)_{hu}}}
\def\noddinfhuB{{(\noddinfB)_{hu}}}
\def\noddinfdg{{({\bf Nod}_{\infty})_{dg}}}
\def\noddinfdgA{{(\noddinfA)_{dg}}}
\def\noddinfdgAA{{(\noddinf\AbimA)_{dg}}}
\def\noddinfdgAB{{(\noddinf\AbimB)_{dg}}}
\def\noddinfdgB{{(\noddinfB)_{dg}}}
\def\moddinf{{\modd_{\infty}}}
\def\moddinfA{{\modd_{\infty}\A}}
\def\naug{{\text{na}}}
\def\infbar{{B_\infty}}
\def\infbarnaug{{B^{\naug}_\infty}}
\def\infcobar{{CB_\infty}}
\def\infbarres{{\bar{B}_\infty}}
\def\infcobarres{{C\bar{B}_\infty}}
\def\infbarA{{B^\A_\infty}}
\def\infbarB{{B^\B_\infty}}
\def\infbarC{{B^\C_\infty}}
\def\inftimes{{\overset{\infty}{\otimes}}}
\def\infhom{{\overset{\infty}{\homm}}}
\def\barhom{{\rm H\overline{om}}}
\def\barend{{\overline{\eend}}}
\def\bartimes{{\;\overline{\otimes}}}
\def\bartimesA{{\;\overline{\otimes}_\A\;}}
\def\bartimesB{{\;\overline{\otimes}_\B\;}}
\def\bartimesC{{\;\overline{\otimes}_\C\;}}
\def\triaA{{\tria \A}}
\def\TPairdg{{\TPair^{dg}}}
\def\algA{{\alg(\A)}}
\def\Ainfty{{A_{\infty}}}
\def\gpmu{{\boldsymbol{\mu}}}
\def\odd{{\text{odd}}}
\def\even{{\text{even}}}
\def\pretriagmns{{\pretriag^{-}}}
\def\pretriagpls{{\pretriag^{+}}}
\def\eilmoordg{\eilmoor^{\text{dg}}}
\def\hprojemdg{\hproj^{\text{dg}}}
\def\hperfemdg{\hperf^{\text{dg}}}
\def\coeilmoordg{\coeilmoor^{\text{dg}}}
\def\eilmoorwk{\eilmoor^{\text{wk}}}
\def\eilmoorwkpf{\eilmoor^{\text{wk,perf}}}
\def\eilmoorwmor{\eilmoor^{\text{dg,wkmor}}}
\def\coeilmoorwk{\coeilmoor^{\text{wk}}}
\def\coeilmoorwmor{\coeilmoor^{\text{dg,wkmor}}}
\def\dnat{{d_{\text{nat}}}}
\def\bareta{{\tilde{\eta}}}
\def\bikmodk{{{\bf{biMod}}_k}}
\def\bidgbimod{{{\bf{biMod}}_{DG}}}
\def\kk{{\mathbb{k}}} 
\def\sym{{\mathcal{S}}}

\newcommand\bicat\mathbf            

\theoremstyle{definition}
\newtheorem{defn}{Definition}[section]
\newtheorem*{defn*}{Definition}
\newtheorem{exmpl}[defn]{Example}
\newtheorem*{exmpl*}{Example}
\newtheorem{exrc}[defn]{Exercise}
\newtheorem*{exrc*}{Exercise}
\newtheorem*{chk*}{Check}
\newtheorem*{remarks*}{Remarks}
\theoremstyle{plain}
\newtheorem{theorem}{Theorem}[section]
\newtheorem*{theorem*}{Theorem}
\newtheorem{conj}[defn]{Conjecture}
\newtheorem*{conj*}{Conjecture}
\newtheorem{question}[defn]{Question}
\newtheorem*{question*}{Question}
\newtheorem{prps}[defn]{Proposition}
\newtheorem*{prps*}{Proposition}
\newtheorem{cor}[defn]{Corollary}
\newtheorem*{cor*}{Corollary}
\newtheorem{lemma}[defn]{Lemma}
\newtheorem*{claim*}{Claim}
\newtheorem{Specialthm}{Theorem}
\renewcommand\theSpecialthm{\Alph{Specialthm}}
\numberwithin{equation}{section}
\renewcommand{\textfraction}{0.001}
\renewcommand{\topfraction}{0.999}
\renewcommand{\bottomfraction}{0.999}
\renewcommand{\floatpagefraction}{0.9}
\setlength{\textfloatsep}{5pt}
\setlength{\floatsep}{0pt}
\setlength{\abovecaptionskip}{2pt}
\setlength{\belowcaptionskip}{2pt}

\begin{abstract}
We prove an orbifold type decomposition theorem for the Hochschild homology
of the symmetric powers of a small DG category $\A$. In noncommutative
geometry, these can be viewed as the noncommutative symmetric quotient 
stacks of $\A$. We use this decomposition to show that the total 
Hochschild homology of the symmetric powers of $\A$ is isomorphic
to the symmetric algebra $S^*(\hochhom_\bullet(\A) \otimes t \kk[t])$. 
Our methods are explicit - we construct mutually inverse homotopy
equivalences of the standard Hochschild complexes involved.
These explicit maps are then used to induce from the symmetric algebra
onto the total Hochschild homology the structures of the Fock space for 
the Heisenberg algebra of $\A$, of a Hopf algebra, and 
of a free $\lambda$-ring generated by $\hochhom_\bullet(\A)$. 
\end{abstract}

\maketitle

\section{Introduction}
\label{section-introduction}

This paper is a step towards developing ``orbifold cohomology theory"
for DG categories. It derives its inspiration from the computation of
Hochschild and cyclic homology of twisted group rings 
(crossed product rings).  

Our starting point is the following well-known result. Let 
$X = \spec A$ be a smooth affine variety over an algebraically
closed field $\kk$ of characteristic $0$. Let $G$ be a finite
group acting on $X$ via a group homomorphism $\alpha: G \to Aut(X)$. 
Let $A_\alpha[G]$ be the corresponding twisted group ring. 
Its homology groups admit the following decomposition:
\begin{equation}
\label{eqn-orbifold-mixed-complex-decomposition-affine-case}
\hochcx_\bullet(A_\alpha[G], W) \simeq 
\big(\bigoplus_{g \in G}  \hochcx_\bullet (A(X^g), W)\big)_G
\end{equation}
Here $\hochcx_\bullet(\cdot, W)$ is the cyclic homology with coefficients
in a module $W$: for different $W$ this gives Hochschild, cyclic and periodic
cyclic homology 
\cite[\S4]{GetzlerJones-TheCyclicHomologyOfCrossedProductAlgebras}. 
Furthermore, $A(X^g)$ is the 
algebra of regular functions on the fixed point set $X^g$ and 
$\big(\ldots\big)_G$ denotes taking the coinvariants of the 
$G$ action $g\colon X^h \rightarrow X^{ghg^{-1}}$. 

The decomposition \eqref{eqn-orbifold-mixed-complex-decomposition-affine-case}
has two steps. The first step is
based on spectral sequence constructed by Feigin and Tsygan 
\cite{FeiginTsygan-CyclicHomologyOfAlgebrasWithQuadraticRelationsUniversalEnvelopingAlgebrasAndGroupAlgebras}
for any $G$ acting on a unital algebra $A$ over a commutative ring $R$.
For finite $G$ whose order is invertible in $R$, 
the spectral sequence reduces to a decomposition into a direct sum of
the twists $\leftidx{_g}A$ of the trivial $A\text{-}A$-bimodule by action 
of $g \in G$
\cite[\S4]{GetzlerJones-TheCyclicHomologyOfCrossedProductAlgebras}.
For a smooth commutative $A$, the second step is the interpretation of
these direct summands by Brylinski in terms of the fixed point sets
of the $G$-action on $X = \spec A$ \cite{Brylinski-CyclicHomologyAndEquivariantTheories}.

There are many geometrical generalisations of the decomposition 
\eqref{eqn-orbifold-mixed-complex-decomposition-affine-case}:
for smooth quasi-projective varieties over $\kk$ 
\cite{Baranovsky-OrbifoldCohomologyAsPeriodicCyclicHomology},  
as an orbifold version of the HKR isomorphism for a global quotient of a smooth variety \cite{ArinkinCaldararuHablicsek-FormalityOfDerivedIntersectionsAndTheOrbifoldHKRIsomorphism}
and a recent version for derived Deligne-Mumford stacks 
\cite{FuPortaSibillaScherotzke-HKRIsomorphismForDerivedDM}.

Our aim is a noncommutative generalisation of 
\eqref{eqn-orbifold-mixed-complex-decomposition-affine-case} 
replacing $A$ by a smooth and proper DG category $\A$. We are guided 
by a noncommutative geometry perspective 
which views small DG categories considered up to Morita equivalence 
as noncommutative schemes
\cite{KontsevichSoibelman-NotesOnAInftyAlgebrasAInftyCategoriesAndNoncommutativeGeometry,KatzarkovKontsevichPantev-HodgeTheoreticAspectsOfMirrorSymmetry, 
Kaledin-HomologicalMethodsInNoncommutativeGeometry, 
Orlov-SmoothAndProperNoncommutativeSchemesAndGluingofDGcategories,
Efimov-HomotopyFinitenessOfSomeDGCategoriesFromAlgebraicGeometry}. 
The first step of the noncommutative decomposition 
\eqref{eqn-orbifold-mixed-complex-decomposition-affine-case}
was established by Nordstr\"om for any small DG category $\A$ over  
a field $\kk$ with $\chr(\kk) = 0$
\cite{Nordstrom-FiniteGroupActionsOnDGCategoriesAndHochschildHomology}. 
The more delicate second step with its interpretation in 
terms of the fixed point loci, where we expect the smoothness
of $\A$ to become necessary, remains open. 

In this paper, we do it for the symmetric group $S_n$ 
acting on $\A^{\otimes n}$, the $n$-th tensor power of a DG category $\A$. 
The geometric fixed point loci are the diagonals $X^m
\subset X^n$ which are easily interpreted as tensor powers
$\A^{\otimes m}$. The conjugacy classes of $S_n$ are given by 
unordered partitions $\underline{\lambda} \vdash n$. 
Write $r_i(\underline{\lambda})$ for the number of parts of size $i$
in $\underline{\lambda}$ and
$r(\underline{\lambda})$ for the total number of parts.  
The fixed point locus of $\underline{\lambda}$ is
$\A^{\otimes r(\underline{\lambda})}$ and the centraliser acts on it 
as $S_{\underline{r}(\underline{\lambda})} := 
S_{r_1(\underline{\lambda})} \times \dots \times 
S_{r_n(\underline{\lambda})}$.
By the K{\"u}nneth formula
\begin{small}
$\hochhom_{\bullet}(\A^{r(\underline{\lambda})})_
{S_{\underline{r}(\underline{\lambda})}}
\simeq \Sym^{\underline{r}(\underline{\lambda})}
\hochhom_\bullet(\A)$\end{small}. Our first main result is:
\begin{theorem}[see Theorem
\ref{theorem-noncommutative-baranovsky-decomposition}]
\label{theorem-intro-noncommutative-orbifold-decomposition}
Let $\A$ be a small DG category and $n \geq 0$. The following
compositions are mutually inverse isomorphisms:
\begin{small}
\begin{equation}
\label{eqn-intro-noncommutative-baranovsky-decomposition-complexes}
\begin{tikzcd}
\hochhom_\bullet(\sym^n \A)
\ar[shift left = 1ex]{r}{\nu}
&
\ar[shift left = 1ex]{l}{\xi}
\bigoplus_{\underline{\lambda} \vdash n}
\hochhom_{\bullet}(\A^{\otimes n};\sigma_{\underline{\lambda}})_{C(\sigma_{\underline{\lambda}})}
\ar[shift left = 1ex]{r}{\sum f_{\underline{\lambda}}}
&
\ar[shift left = 1ex]{l}{\sum g_{\underline{\lambda}}}
\bigoplus_{\underline{\lambda} \vdash n}
\Sym^{\underline{r}(\underline{\lambda})} \hochhom_{\bullet}(\A),
\end{tikzcd}
\end{equation}
\end{small}
where $\nu$ and $\xi$ are the maps explicitly constructed in 
\S\ref{section-from-hh-of-quotient-stack-to-twisted-hhs}
and $f_{\underline{\lambda}}$ and $g_{\underline{\lambda}}$ -- in 
\S\ref{section-from-underline-n-twisted-hh-to-sym-underline-n-hh}. 
\end{theorem}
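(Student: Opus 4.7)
The theorem asserts that a two-stage composite is a homotopy equivalence of complexes, so my plan is to address each stage separately and then check the two compositions that appear in the statement. The first stage $(\nu,\xi)$ is a noncommutative generalisation of the Feigin--Tsygan / Getzler--Jones decomposition, already available at the level of DG categories by Nordstr\"om's theorem recalled in the introduction; the second stage $(f_{\underline{\lambda}}, g_{\underline{\lambda}})$ is the more delicate ``geometric'' step where the product structure of the fixed loci becomes transparent.

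For the first stage, I would work directly with the standard Hochschild complex of the crossed product DG category $\A^{\otimes n} \rtimes S_n$. Every Hochschild chain splits as a sum, indexed by tuples of group elements, of chains of $\A^{\otimes n}$ twisted by these elements; the differential groups the twists together multiplicatively, so that after taking coinvariants the subcomplexes indexed by different conjugacy classes decouple. The map $\nu$ projects onto the component of a given conjugacy class $\underline{\lambda}$ and then averages over the orbit to produce a representative chain twisted by the chosen $\sigma_{\underline{\lambda}}$; the map $\xi$ is the $S_n$-equivariant inclusion summing over the orbit of $\sigma_{\underline{\lambda}}$. Since $\chr(\kk) = 0$ makes $|S_n|$ invertible, the two compositions are, respectively, the identity on coinvariants and an explicit averaging projector; verifying this is a direct computation on the bar complex.

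For the second stage, the key observation is that the centraliser of $\sigma_{\underline{\lambda}}$ in $S_n$ is isomorphic to $\prod_{i\geq 1} (\mathbb{Z}/i)^{r_i(\underline{\lambda})} \rtimes S_{r_i(\underline{\lambda})}$, and the twisted complex factorises along the cycles of $\sigma_{\underline{\lambda}}$:
\begin{equation*}
\hochhom_\bullet(\A^{\otimes n}; \sigma_{\underline{\lambda}})
\;\cong\;
\bigotimes_{i \geq 1}
\bigl(\hochhom_\bullet(\A^{\otimes i}; \tau_i)\bigr)^{\otimes r_i(\underline{\lambda})},
\end{equation*}
where $\tau_i$ denotes the standard $i$-cycle. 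The crucial ``cycle reduction'' lemma is then that $\hochhom_\bullet(\A^{\otimes i}; \tau_i) \simeq \hochhom_\bullet(\A)$, via an explicit pair of chain maps: one contracts an $\A^{\otimes i}$-chain by composing its entries around the cycle to give a single $\A$-chain, the other inserts identities and shuffles back, using Eilenberg--Zilber / shuffle technology. Given this, $f_{\underline{\lambda}}$ and $g_{\underline{\lambda}}$ are obtained by applying cycle reduction factor-by-factor and averaging over the residual $S_{r_i(\underline{\lambda})}$ permuting equal-length cycles, which yields exactly $\Sym^{\underline{r}(\underline{\lambda})} \hochhom_\bullet(\A)$.

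The main obstacle lies in the cycle reduction step. One must exhibit explicit chain-level maps that are honest quasi-inverses (or at least come with an explicit chain homotopy) and which are moreover equivariant for the centraliser action, so that passing to coinvariants preserves the isomorphism. This involves delicate bookkeeping of signs arising from the cyclic twist and from Koszul conventions, and the choice of shuffle homotopy must be made compatibly with the $S_{r_i(\underline{\lambda})}$-actions permuting equal cycles. Once these explicit maps and homotopies are recorded, the verification that each composition in the theorem is chain-homotopic to the identity reduces to standard Eilenberg--Zilber / Alexander--Whitney identities on the multi-bar complexes involved.
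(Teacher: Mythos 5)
Your architecture mirrors the paper's exactly: stage one is the crossed-product decomposition implemented by the explicit maps $\nu$ and $\xi$, stage two is the K{\"u}nneth (Eilenberg--Zilber/Alexander--Whitney) factorisation of $\hochcx_\bullet(\A^{\otimes n};\sigma_{\underline{\lambda}})$ along the cycles of $\sigma_{\underline{\lambda}}$ followed by a ``cycle reduction'' to $\hochcx_\bullet(\A)$, made equivariant so it descends to centraliser coinvariants. The problem is that at both stages the step you defer is the actual mathematical content, and your description of stage one is moreover inaccurate. While $\nu\xi=\id$ is indeed immediate, $\xi\nu$ is not ``an explicit averaging projector'' verified by a direct computation: it pushes all the group labels into the zeroth tensor slot and conjugates by the averaging variable, and it is only \emph{chain homotopic} to the $G$-averaging map via a genuinely intricate homotopy (the paper's Proposition~\ref{prps-nu-and-xi-are-mutually-inverse-homotopy-equivalences} constructs a page-long explicit formula for it); on top of that, to conclude that the averaging map acts as the identity on $\hochhom_\bullet(\A\rtimes G)$ one needs the nontrivial fact that the induced $G$-action on $\hochhom_\bullet(\A\rtimes G)$ is homologically trivial. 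Leaning on Nordstr\"om's abstract decomposition does not close this either: the theorem asserts that \emph{these particular} maps $\nu$ and $\xi$ are mutually inverse isomorphisms, and ``$\nu\xi=\id$ on homology plus existence of some isomorphism'' does not make $\nu$ an isomorphism (the spaces are infinite-dimensional) unless you identify Nordstr\"om's isomorphism with $\nu$, which you do not do.

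For stage two, the cycle-reduction lemma --- explicit mutually inverse chain maps between $\hochcx_\bullet(\A^{\otimes i};t_i)_{t_i}$ and $\hochcx_\bullet(\A)$ together with explicit homotopies --- is precisely the technical heart of the paper (Section~\ref{section-long-cycle-case}: the maps $f_n$, $g_n$ and the homotopies $\Phi_n$, $\Psi_n$, whose verification requires the combinatorial bookkeeping encoded in $N_{k,l}$, $Y_{p,q}$ and $T^{p,q}_{m,n}$). Your proposal names this as ``the main obstacle'' and then simply asserts that such maps and homotopies can be recorded; nothing in it would let a reader construct them, check the signs you yourself flag as delicate, or verify the centraliser equivariance needed to pass to $S_{\underline{r}(\underline{\lambda})}$-coinvariants. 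Note also that you state the reduction as $\hochhom_\bullet(\A^{\otimes i};\tau_i)\simeq\hochhom_\bullet(\A)$ without cyclic coinvariants; either one works with the $t_i$-coinvariant complex from the outset (as the paper does in defining $f_n$ and $g_n$) or one must separately prove that the residual cyclic action on the twisted homology is trivial. As it stands the proposal is a correct plan along the paper's route, but the steps it leaves out are exactly the ones the theorem requires to be proved.
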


The action of $S_n$ on $\A^{\otimes n}$ is strong, i.e. it acts by
automorphisms, rather than autoequivalences. We can thus use 
the formalism of equivariant DG categories introduced in 
\cite[\S4.8]{GyengeKoppensteinerLogvinenko-TheHeisenbergCategoryOfACategory}. 
For any DG category $\A$ with a strong action of a finite group $G$, 
the equivariant category $\A \rtimes G$ is an analogue of
the twisted group ring, see Defn.~\ref{defn-equivariant-dg-category}. 
There is an isomorphism 
$\modd^G\text{-}\A \simeq \modd\text{-}(\A \rtimes G)$, i.e. the
$G$-equivariant $\A$-modules are the same 
as ordinary $(\A \rtimes G)$-modules. 
We can thus use $\A \rtimes S_n$ for  $\Sym^n \A$, as oppposed to
$\hperf^{S_n}\text{-}\A$ in an earlier approach of Kapranov and Ganter
\cite{GanterKapranov-SymmetricAndExteriorPowersOfCategories}.

In the Morita DG enhancement framework, both 
$\A \rtimes S_n$ and $\hperf^{S_n}\text{-}\A$ correspond
to the same enhanced triangulated category $\Sym^n \A$, 
the noncommutative analogue of the symmetric quotient stack $[X^n/S_n]$. 
The advantage is that $\A \rtimes G$ is a much smaller category, and
many computations become simpler. 
In particular, the explicit isomorphisms $\nu$ and $\xi$ in 
\eqref{eqn-intro-noncommutative-baranovsky-decomposition-complexes}
give in this simpler language the decomposition obtained by Nordstr\"om in 
\cite{Nordstrom-FiniteGroupActionsOnDGCategoriesAndHochschildHomology}
using the formalism of 
\cite{GanterKapranov-SymmetricAndExteriorPowersOfCategories}.

The decomposition of Theorem
\ref{eqn-intro-noncommutative-baranovsky-decomposition-complexes} 
implies that the total Hochschild homology $\bigoplus_{n \geq 0}
\hochhom_\bullet(\Sym^n \A)$ can be given the structure of a certain
symmetric algebra, the result
conjectured in \cite[Conj.~3.24]{BelmansFuKrug-HochschildCohomologyOfHilbertSchemesOfPointsOnSurfaces} and
\cite[Cor.~8.6]{GyengeKoppensteinerLogvinenko-TheHeisenbergCategoryOfACategory}
:
\begin{theorem}[see Theorem
\ref{theorem-total-hh-of-sym-stacks-iso-to-sym-algebra-of-hh-otimes-tkt}]
\label{theorem-intro-symmetric-algebra-structure-on-total-hh}
Let $\A$ be a small DG category. We have explicit mutually inverse 
isomorphisms  
\begin{equation}
\label{eqn-intro-symmetric-algebra-isomorphism-hh}
\begin{tikzcd}
\bigoplus_{n \geq 0} \hochhom_\bullet(\sym^n \A)
\ar[shift left = 1.24ex]{r}{\zeta}[']{\simeq}
&
S^*(\hochhom_\bullet(\A) \otimes t \kk[t]),
\ar[shift left = 1.25ex]{l}{\eta}
\end{tikzcd}
\end{equation}
where the RHS is the graded 
symmetric algebra of $\hochhom_\bullet(\A) \otimes t \kk[t]$ with 
$t \kk[t]= \kk t \oplus \kk t^2 \oplus \dots$ being of $\deg = 0$. 
Isomorphisms $\eta$ and $\zeta$ are the compositions of 
isomorphisms 
\eqref{eqn-intro-noncommutative-baranovsky-decomposition-complexes} 
of Theorem \ref{theorem-intro-noncommutative-orbifold-decomposition}
with the explicit isomorphism \eqref{eqn-symmetric-algebra-decomposition}. 
\end{theorem}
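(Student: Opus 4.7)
The plan is to reduce Theorem \ref{theorem-intro-symmetric-algebra-structure-on-total-hh} to a purely combinatorial repackaging of Theorem \ref{theorem-intro-noncommutative-orbifold-decomposition} plus the standard fact that the symmetric algebra turns direct sums into tensor products. Concretely, I would take $\zeta$ and $\eta$ to be the direct sum over $n \geq 0$ of the two compositions in \eqref{eqn-intro-noncommutative-baranovsky-decomposition-complexes}, followed by the identification \eqref{eqn-symmetric-algebra-decomposition}, so nothing new needs to be constructed. The task is therefore just to check that the two sides match piece by piece, in a way compatible with the $t$-grading.

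First, by Theorem \ref{theorem-intro-noncommutative-orbifold-decomposition} there are mutually inverse explicit isomorphisms
\begin{equation*}
\bigoplus_{n \geq 0} \hochhom_\bullet(\sym^n \A) \;\simeq\;
\bigoplus_{n \geq 0}\, \bigoplus_{\underline{\lambda} \vdash n}
\Sym^{\underline{r}(\underline{\lambda})} \hochhom_\bullet(\A).
\end{equation*}
Next I would use the bijection between pairs $(n, \underline{\lambda}\vdash n)$ and finitely supported sequences $\underline{r} = (r_1, r_2, \dots)$ of non-negative integers, where $\underline{\lambda} = (1^{r_1}, 2^{r_2}, \dots)$ and $n = \sum_{i \geq 1} i \, r_i$. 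Under this bijection the multi-symmetric power factorises as
\begin{equation*}
\Sym^{\underline{r}(\underline{\lambda})} \hochhom_\bullet(\A) \;=\;
\bigotimes_{i \geq 1} \Sym^{r_i} \hochhom_\bullet(\A),
\end{equation*}
which I read as the $\underline{r}$-isotypic piece of $\bigotimes_{i\geq 1} S^*\bigl(\hochhom_\bullet(\A) \otimes \kk t^i\bigr)$ after placing the $i$-th tensor factor in $t$-weight $i$.

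Then I would invoke the standard isomorphism
\begin{equation*}
S^*\bigl(\hochhom_\bullet(\A) \otimes t\kk[t]\bigr) \;=\;
S^*\Bigl(\bigoplus_{i \geq 1} \hochhom_\bullet(\A) \otimes \kk t^i\Bigr) \;\simeq\;
\bigotimes_{i \geq 1} S^*\bigl(\hochhom_\bullet(\A) \otimes \kk t^i\bigr),
\end{equation*}
which is the content of \eqref{eqn-symmetric-algebra-decomposition}. Summing the $\underline{r}$-isotypic pieces of the right-hand side over all finitely supported $\underline{r}$ recovers the whole symmetric algebra, while on the other side summing over the corresponding $(n, \underline{\lambda})$ recovers the total Hochschild homology. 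The $t$-grading on the symmetric algebra then agrees with the $n$-grading on the total Hochschild homology because the $\underline{r}$-isotypic piece sits in $t$-weight $\sum i\, r_i = n$.

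The only point demanding care, and the part I would treat as the ``main obstacle'', is bookkeeping: verifying that the direct sum decomposition indexed by partitions in Theorem \ref{theorem-intro-noncommutative-orbifold-decomposition} lines up with the tensor product decomposition of the symmetric algebra under this bijection, with the grading, and with the Koszul signs coming from the graded symmetric structure on $\Sym^{r_i}\hochhom_\bullet(\A)$. Since Theorem \ref{theorem-intro-noncommutative-orbifold-decomposition} gives genuinely explicit maps $\nu,\xi,f_{\underline{\lambda}},g_{\underline{\lambda}}$, and the symmetric algebra decomposition \eqref{eqn-symmetric-algebra-decomposition} is an equality of graded vector spaces, this is a finite check: once it is made in degree $n$ for all $n$, the composition of explicit maps produces $\zeta$ and $\eta$, which are mutually inverse because each constituent isomorphism is, completing the proof.
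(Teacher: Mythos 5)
Your proposal is correct and follows essentially the same route as the paper: the paper likewise defines $\zeta$ and $\eta$ as the sum over $n$ of the maps from Theorem \ref{theorem-intro-noncommutative-orbifold-decomposition} composed with the regrouping isomorphism \eqref{eqn-symmetric-algebra-decomposition}, whose content is exactly your bijection between pairs $(n,\underline{\lambda}\vdash n)$ and finitely supported sequences $\underline{r}$ with $n=\sum i\,r_i$, and then concludes they are mutually inverse because each constituent is. The bookkeeping you flag (matching the partition-indexed summands with the $t$-weight decomposition, including the graded symmetric structure) is precisely what the paper carries out in the paragraphs preceding its Theorem \ref{theorem-total-hh-of-sym-stacks-iso-to-sym-algebra-of-hh-otimes-tkt}.
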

  
It is very important that we construct the isomorphisms 
of Theorems 
\ref{theorem-intro-noncommutative-orbifold-decomposition}
and \ref{theorem-intro-symmetric-algebra-structure-on-total-hh} 
explicitly, by constructing them as explicits maps of the standard
Hochschild complexes. For any graded vector space $E$, the symmetric
algebra $S^*(E \otimes t \kk[t])$ carries a number of natural 
structures: of a Hopf algebra, of the free $\lambda$-ring generated by $E$,
and of the Fock space for the Heisenberg algebra $H_{E}$ of $E$.  
Isomorphisms \eqref{eqn-intro-symmetric-algebra-isomorphism-hh}
allow us to transfer these structures to the total Hochschild homology 
$\bigoplus_{n \geq 0} \hochhom_\bullet(\sym^n \A)$ and compute
explicitly the result. 

In particular, one immediate application of our Theorem 
\ref{theorem-intro-symmetric-algebra-structure-on-total-hh}
is that $\bigoplus_{n \geq 0} \hochhom_\bullet(\sym^n \A)$  
is the Fock space for the Heisenberg algebra
$H_{\hochhom_\bullet(\A)}$. This generalises
the K-theoretic results of Segal
\cite{Segal-EquivariantKTheoryAndSymmetricProducts} and Wang
\cite{Wang-EquivariantKTheoryWreathProductsAndHeisenbergAlgebra}
inspired by the famous Heisenberg action of Nakajima 
\cite{Nakajima-HeisenbergAlgebraAndHilbertSchemesOfPointsOnProjectiveSurfaces}
and Grojnowski
\cite{Grojnowski-InstantonsAndAffineAlgebrasITheHilbertSchemeAndVertexOperators}.
In \cite{GyengeLogvinenko-TheHeisenbergAlgebraOfAVectorSpaceAndHochschildHomology}
Gyenge and the third author use the results of this paper
to compute explicitly the induced Heisenberg action on
$\bigoplus_{n \geq 0} \hochhom_\bullet(\sym^n \A)$
and prove it to be the same as the decategorification of the categorical
Heisenberg action of
\cite{GyengeKoppensteinerLogvinenko-TheHeisenbergCategoryOfACategory}.  

In this paper, we use the isomorphisms of Theorem 
\ref{theorem-intro-symmetric-algebra-structure-on-total-hh}
to compute the induced Hopf algebra and $\lambda$-ring structures on 
$\bigoplus_{n \geq 0} \hochhom_\bullet(\sym^n \A)$. 
Our third main result is:

\begin{theorem}[see Theorem
\ref{theorem-hopf-algebra-structure-on-the-total-hh-of-symn-a}]
\label{theorem-intro-hopf-algebra-structure-on-the-total-hh-of-symn-a}
Let $\A$ be a small DG category. The isomorphisms 
of Theorem \ref{theorem-intro-symmetric-algebra-structure-on-total-hh}
identify the standard Hopf algebra operations $\mu$, $\Delta$, $u$, $\epsilon$,
and $H$ on $S^*(\hochhom_\bullet(\A) \otimes t \kk[t])$ 
with the following operations on 
$\bigoplus_{n \geq 0} \hochhom_\bullet(\sym^n \A)$:
\begin{itemize}
\item The multiplication map $\mu$
given by the sum over all $n,m \geq 0$ of the maps
\begin{equation*}
\hochhom_\bullet(\sym^n \A) \otimes \hochcx_\bullet(\sym^m \A)
\xrightarrow{\simeq}
\hochhom_\bullet(\sym^n \A \otimes \sym^m \A)
\xrightarrow{\Ind_{S_n \times S_m}^{S_{n+m}}}
\hochhom_\bullet(\sym^{n+m} \A). 
\end{equation*}
\item The comultiplication map $\Delta$
given by the sum over all $n \geq 0$ and all subsets
$I \subseteq \left\{ 1,\dots, n \right\}$ of the maps
\begin{small}
\begin{equation*}
\begin{tikzcd}
\hochhom_\bullet(\sym^n \A) 
\ar{r}{\Res_{S_{I} \times S_{\bar{I}}}^{S_{n}}}
&
\hochhom_\bullet(\sym^{|I|} \A \otimes \sym^{n-|I|} \A)
\ar{r}{\simeq}
&
\hochhom_\bullet(\sym^{|I|} \A) \otimes \hochcx_\bullet(\sym^{n-|I|} \A), 
\end{tikzcd}
\end{equation*}
\end{small}
where $\bar{I} =  \left\{ 1, \dots, n \right\} \setminus I$, 
where $S_I, S_{\bar{I}} < S_n$ 
are the subgroups that only permute 
the elements of $I$ and of $\bar{I}$, and where $|I|$ is the size of $I$. 
\item The unit map $u'$ and the counit map $\epsilon'$
given by the inclusion of and the projection onto 
$\hochhom_\bullet(\sym^0 \A) \simeq \kk$, 
\item The antipode map $H'$
given on $\hochhom_\bullet(\sym^n \A)$ by $(-1)^n \id$. 
\end{itemize}
\end{theorem}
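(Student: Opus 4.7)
The plan is to verify each of the five claims by tracking the operations through the explicit decomposition
$$\hochhom_\bullet(\sym^n \A) \;\xrightarrow{\;\simeq\;}\; \bigoplus_{\underline{\lambda} \vdash n} \Sym^{\underline{r}(\underline{\lambda})} \hochhom_\bullet(\A) \;\xrightarrow{\;\simeq\;}\; \bigoplus_{\underline{\lambda} \vdash n} S^{r(\underline{\lambda})}\bigl(\hochhom_\bullet(\A)\otimes t\kk[t]\bigr)_{\underline{\lambda}}$$
supplied by Theorem \ref{theorem-intro-noncommutative-orbifold-decomposition} and the symmetric algebra decomposition, where the summand indexed by $\underline{\lambda}$ corresponds to the monomial $\prod_i t^{i\, r_i(\underline{\lambda})}$. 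Because the maps $\nu, \xi, f_{\underline{\lambda}}, g_{\underline{\lambda}}$ are written down on the level of standard Hochschild complexes, everything reduces to a calculation on the direct sum of twisted Hochschild homologies indexed by conjugacy classes of $S_n$.

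For the multiplication $\mu$, I first observe that the K\"unneth map $\hochhom_\bullet(\sym^n\A) \otimes \hochhom_\bullet(\sym^m\A) \xrightarrow{\simeq} \hochhom_\bullet(\sym^n\A\otimes \sym^m\A)$ is compatible with the partition decomposition: a pair of conjugacy classes $([\sigma_{\underline{\lambda}}], [\sigma_{\underline{\mu}}])$ in $S_n\times S_m$ maps to the pair summand $(\underline{\lambda},\underline{\mu})$ on the right. The induction map $\Ind_{S_n\times S_m}^{S_{n+m}}$ then takes this to the conjugacy class $[\sigma_{\underline{\lambda}}\cdot \sigma_{\underline{\mu}}]$ in $S_{n+m}$, which has cycle type $\underline{\lambda}\cup\underline{\mu}$. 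Under the identification with the symmetric algebra, the operation $(\underline{\lambda},\underline{\mu}) \mapsto \underline{\lambda}\cup\underline{\mu}$ is exactly concatenation of monomials, i.e.\ multiplication in $S^*(\hochhom_\bullet(\A)\otimes t\kk[t])$. The only non-formal point is bookkeeping the centralisers: one checks $C_{S_{n+m}}(\sigma_{\underline{\lambda}\cup\underline{\mu}})$ fits into the standard short exact sequence involving $C_{S_n}(\sigma_{\underline{\lambda}})\times C_{S_m}(\sigma_{\underline{\mu}})$ and the symmetric group that permutes equally-sized cycles between $\underline{\lambda}$ and $\underline{\mu}$; taking coinvariants recovers the symmetric product on common parts, which is precisely the multiplication rule in $\Sym^*$.

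For the comultiplication $\Delta$, for $\sigma \in S_n$ of cycle type $\underline{\lambda}$, restriction to $S_I\times S_{\bar I}$ vanishes unless $I$ is $\sigma$-stable, i.e.\ a union of cycles of $\sigma$. The $\sigma$-stable subsets are in bijection with subsets of the set of $r(\underline{\lambda})$ cycles, and each yields a bipartition $\underline{\lambda}=\underline{\mu}\cup\underline{\nu}$ with $\underline{\mu}\vdash|I|$ and $\underline{\nu}\vdash n-|I|$. Summing over all such $I$ and then using the K\"unneth isomorphism to split the target, I get exactly the sum over splittings of the $r(\underline{\lambda})$ primitive tensor factors, which on the symmetric algebra side is the unshuffle coproduct $\Delta(v_1\cdots v_r) = \sum_{J\subseteq\{1,\ldots,r\}} v_J\otimes v_{\bar J}$. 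Combinatorial care is needed to show that the multiplicities obtained from non-$\sigma$-stable subsets after averaging over the $S_n$-conjugation action indeed cancel or reassemble into the same count.

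The unit and counit claims are immediate from the identifications $\hochhom_\bullet(\sym^0\A)\simeq \kk \simeq S^0(\hochhom_\bullet(\A)\otimes t\kk[t])$ together with the fact that the isomorphisms of Theorem \ref{theorem-intro-symmetric-algebra-structure-on-total-hh} are the identity in weight $0$. Once $\mu, \Delta, u, \epsilon$ are pinned down, the antipode $H$ is forced by the Hopf algebra axioms; the claim that it acts as $(-1)^n\id$ on $\hochhom_\bullet(\sym^n\A)$ then follows from explicitly applying the convolution-inverse formula on each weight-graded component, using that the primitive generators indexed by cycles $t^i \cdot a$ have antipode $-t^i\cdot a$ but recombine, after accounting for the sign of $\sigma_{\underline{\lambda}}$ inherent in the Hochschild complex of the twisted category, to give the total sign $(-1)^n$. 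The main obstacle will be the combinatorial bookkeeping in the comultiplication step, namely systematically matching the $2^n$-fold subset sum on the HH side with the $2^{r}$-fold cycle-splitting sum on the symmetric algebra side, and tracking Koszul signs carefully to confirm the antipode formula.
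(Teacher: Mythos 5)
Your overall strategy --- pushing $\mu'$ and $\Delta'$ through the explicit decomposition and matching partition/conjugacy-class combinatorics --- is in the spirit of the paper, but as written it defers exactly the points that constitute the paper's proof. For the multiplication, what has to be checked is that the chain-level isomorphism $\zeta$ (a composition of $\nu$, $\xi$ --- which involves the symmetriser $\frac{1}{|G|}\sum_h h.(-)$ and a choice of conjugacy-class representatives --- together with the Alexander--Whitney/Eilenberg--Zilber maps and the maps $f_{n_i}$) intertwines $\ezmap$ followed by $\Ind_{S_n\times S_m}^{S_{n+m}}$ with concatenation in $S^*(\hochhom_\bullet(\A)\otimes t\kk[t])$. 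Asserting that the K\"unneth map ``is compatible with the partition decomposition'' and that induction ``takes the pair to the union cycle type'' presupposes this compatibility rather than proving it: the decomposition \emph{is} $\zeta$, so tracking sectors on homology is circular unless the chain maps are shown to commute with $\ezmap$ and $\Ind$. The paper proves this by an explicit diagram chase at the level of Hochschild complexes (the three squares assembled in the proof of Theorem \ref{theorem-hopf-algebra-structure-on-the-total-hh-of-symn-a}), using the associativity of $\ezmap$, the compatibility of the maps $\xi_g$ and of the symmetrisers with $\ezmap$, the fact that $\Ind_{S_n\times S_m}^{S_{n+m}}$ restricts on representables to the embedding $\A^{\otimes(n+m)}\rtimes(S_n\times S_m)\hookrightarrow \A^{\otimes(n+m)}\rtimes S_{n+m}$, and the triviality of the $G$-action on $\hochhom_\bullet(\A\rtimes G)$; the nontrivial compatibilities are imported from \cite{GyengeLogvinenko-TheHeisenbergAlgebraOfAVectorSpaceAndHochschildHomology}. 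Your ``centraliser bookkeeping'' remark corresponds to only one of these squares (the passage from $C(\sigma_{\underline{\nu}})\times C(\sigma_{\underline{\lambda}})$- to $C(\sigma_{\underline{\nu}}\times\sigma_{\underline{\lambda}})$-coinvariants), so the multiplication step is incomplete as it stands.

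The gap is sharper for the comultiplication: your pivotal claims --- that $\Res_{S_I\times S_{\bar I}}^{S_n}$ kills the sector of $\sigma$ unless $I$ is $\sigma$-stable, and that the $2^n$-fold subset sum reassembles into the $2^{r(\underline{\lambda})}$-fold unshuffle coproduct after averaging over conjugation --- are precisely the content of the statement, and you explicitly leave them open (``combinatorial care is needed''). The paper disposes of this by reducing to the commutativity of a diagram proved in \cite[Lemma 5.18]{GyengeLogvinenko-TheHeisenbergAlgebraOfAVectorSpaceAndHochschildHomology}; without that, or an equivalent direct computation of how $\Res$ interacts with $\zeta$ at the chain level, the $\Delta$ part is a plan, not a proof. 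Finally, your antipode argument invokes a ``sign of $\sigma_{\underline{\lambda}}$ inherent in the Hochschild complex of the twisted category'' that recombines the signs; no such sign is produced by $\zeta$ or $\eta$, so this derivation is not valid --- the paper treats the statements about $u$, $\epsilon$ and $H$ as immediate from the definitions, and deriving $H'$ from uniqueness of the antipode would in any case require $\mu'$ and $\Delta'$ to have been fully established first. The unit/counit part of your argument is fine.
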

Finally, in Section \ref{section-k-theoretic-parallels-and-lambda-ring-structure} we compute the induced $\lambda$-ring structure on 
$\bigoplus_{n \geq 0} \hochhom_\bullet(\sym^n \A)$
and discuss the parallels with the $K$-theoretic results of Wang
\cite{Wang-EquivariantKTheoryWreathProductsAndHeisenbergAlgebra}.

When finishing our work on this paper, it was brought to our attention 
that the decompositions of Theorems 
\ref{theorem-intro-noncommutative-orbifold-decomposition} and
\ref{theorem-intro-symmetric-algebra-structure-on-total-hh}
were independently and simultaneously established by Nordstr\"om 
in \cite{Nordstrom-HochschildHomologyOfSymmetricPowersOfDGcategories}. 
He establishes them via general considerations which do 
not yield explicit maps on the level of Hochschild complexes. 
It is not possible, using his approach, to compute the induced 
Fock space, Hopf algebra, and $\lambda$-ring structures on
$\bigoplus_{n \geq 0} \hochhom_\bullet(\sym^n \A)$.

This paper is organized as follows. In Section \ref{section-preliminaries}
we give a quick overview of
the standard tools for DG categories: tensor products, modules, Hochschild 
complexes, semidirect products by a finite group and Eilenberg-Zilber theorem
for Hochschild complexes. In particular, for any functor $F: \A \to \A$ 
we have the twist $\leftidx{_F} \A$ of the $\AbimA$ bimodule $\A$ and
the Hochschild complex $\hochcx_\bullet(\A, F)$ computing Hochschild
homology of this bimodule. 
The emphasis is on explicit maps and constructions. 
In Section \ref{section-long-cycle-case} we consider the case of 
the cycle $t_n = (1, 2, \ldots, n)$ acting on $\mathcal{A}^{\otimes n}$ 
and construct an explicit homotopy equivalence between the 
complex of coinvariants $\hochcx_\bullet(\A^{\otimes n};t_n)_{t_n}$ and $\hochcx_\bullet(\A)$. In Section \ref{section-noncommutative-baranovsky-decomposition} we use this computation and Eilenberg-Zilber
theorem to write down the explicit isomorphisms of Theorems 
\ref{theorem-intro-noncommutative-orbifold-decomposition} and
\ref{theorem-intro-symmetric-algebra-structure-on-total-hh}. 
In Section 5 we use these explicit isomorphisms to compute
to compute the Hopf algebra and $\lambda$-ring structures on the 
direct sum $\bigoplus_{n \geq 0} \hochhom_\bullet(\sym^n \A)$ in terms of 
standard algebraic constructions involving $\hochhom_\bullet(\A)$.  

\bigskip

\noindent
\textbf{Acknowledgements.} 
We are grateful to Adam Gyenge and Andreas Krug for helpful comments
and suggestions. We are grateful to Hiraku Nakajima for introducing
the third author to the works of Segal
\cite{Segal-EquivariantKTheoryAndSymmetricProducts} and Wang
\cite{Wang-EquivariantKTheoryWreathProductsAndHeisenbergAlgebra}. 
The first author would also like to thank the participants of SUMaR at 
K-State 2025 --- Matthew Garwacke, Caleb Hornbuckle, Ben Joseph, and 
Emmerson Taylor. 
The first author was partially supported by the NSF grant DMS-2243854.

\section{Preliminaries}
\label{section-preliminaries}

\subsection{DG categories, modules and tensor products}

We briefly recall here the basics on DG categories (always assumed small in 
this paper) mostly to fix the notation, sending the reader to \cite{Keller-OnDifferentialGradedCategories}
for a more detailed treatment. 
A DG category over a field $\kk$ is a category $\A$ in which for any 
pair of objects $a, b$ the set $\leftidx{_a}{{\A}}{_b}$ of morphisms
$b \to a$ is a differential graded $\kk$-module, i.e. a complex of 
vector spaces over $\kk$. The inversion of order in the notation 
$\leftidx{_a}{{\A}}{_b}$ for morphisms makes it easier to identify composable morphisms
when working with Hochschild complexes
as compositions of morphisms are described by 
morphisms of complexes $\leftidx{_a}{{\A}}{_b} \otimes_\kk \;\leftidx{_b}{{\A}}{_c}
\to \leftidx{_a}{{\A}}{_c}$,  $\alpha \otimes \beta \mapsto \alpha \beta$. 

DG functors between DG categories are usual functors that are compatible with 
differentials on morphisms. 

A \textit{left DG module} $M$ over a DG category $\A$ is a DG functor 
$M: \A \to C_{dg}(\kk)$ taking values in the DG category of complexes over
$\kk$. Similarly, a \textit{right DG module} is a functor $\A^{op}$ from the opposite 
category. Hence for each object $a \in obj(\A)$ we have a complex 
$\leftidx{_a}M := M(a)$ with morphisms of complexes $\leftidx{_a}{{\A}}{_b}
\otimes_\kk \leftidx{_b}M \to \leftidx{_a}M$ for any pair of objects $a, b$, and 
similarly for right DG modules. 

A \textit{tensor product} $\A_1 \otimes \A_2$
of two DG categories $\A_1$, $\A_2$ has the set of objects
$\obj(\A_1) \times \obj(\A_2)$ with the morphisms
$$
\leftidx{_{a_1, a_2}}{{\left(\A_1 \otimes \A_2\right)}}{_{b_1, b_2}} := 
\leftidx{_{a_1}}{{\left(\A_1\right)}}{_{b_1}} \otimes_\kk\leftidx{_{a_2}}{{\left(\A_2\right)}}{_{b_2}}
$$
and natural compositions (involving the Koszul sign rule) and units. 

A \textit{DG $\A_1\text{-}\A_2$-bimodule} $M$ if a left DG module over $\A_1 \otimes \A_2^{op}$ which is
given by the complexes $\leftidx{_a}{{M}}{_b}: = M(a, b)$ similarly to the above. 
Central to this paper is the following special case of this notion. 
Let $F\colon \A \rightarrow \A$ be a DG functor. We define the bimodule
$$ \leftidx{_F}{\A} \in \AmodA $$
by setting 
$$ \leftidx{_a}{\left(\leftidx{_F}{\A}\right)}{_b} := 
\leftidx{_{Fa}}{{\A}}{_b}, $$
and letting $\A$ act naturally on the right and via $F$ on the left: 
$$ \alpha.(\beta).\gamma = F(\alpha) \beta \gamma \in \leftidx{_{Fc}}{{\A}}{_d} \quad \quad \quad 
\forall\ \alpha \in \leftidx{_c}{{\A}}{_a},\ 
\beta \in \leftidx{_{Fa}}{{\A}}{_b},\  \gamma
\in \leftidx{_b}{{\A}}{_d}. $$

\subsection{Equivariant DG categories}
\label{section-equivariant-dg-categories}

In this paper, we have to work with equivariant DG categories –– 
DG categories equipped with \textit{strong} group actions - and
 equivariant modules over them.  To that end, a new approach
 was introduced in
 \cite[\S4.8]{GyengeKoppensteinerLogvinenko-TheHeisenbergCategoryOfACategory}
 and further refined in
 \cite[\S5.2]{GyengeLogvinenko-TheHeisenbergAlgebraOfAVectorSpaceAndHochschildHomology}.
 It was 
 designed for working with DG categories up to Morita equivalence and it differs
 e.g. from the one   introduced by Kapranov and Ganter
in \cite{GanterKapranov-SymmetricAndExteriorPowersOfCategories}. 
We give below a small overview to set up notation. 

Let $\A$ be a small DG category. A \emph{strong} action 
of a finite group $G$ on $\A$ is an embedding of $G$ 
into the group of DG automorphisms of $\A$.  In particular, $G$ acts by 
\textit{automorphisms} rather than autoequivalences. It is a fairly rare situation 
but in this paper we work with the action of $G = S_n$ and its subgroups, on the tensor powers 
$\A^{\otimes n}$. In this case the action is strong. See \cite{Nordstrom-FiniteGroupActionsOnDGCategoriesAndHochschildHomology} for the discussion of
Hochschild complexes in the setting of weaker actions of groups on DG categories. 

\begin{defn}[\cite{GyengeKoppensteinerLogvinenko-TheHeisenbergCategoryOfACategory}, Defn.~4.45]
	\label{defn-equivariant-dg-category}
	The \emph{semi-direct product} $\A \rtimes G$ is the following
	DG category:
	\begin{itemize}
		\item $\obj \A \rtimes G = \obj \A$,
		\item For any two objects $a,b \in \obj(\A \rtimes G)$ their morphism complex is 
		\begin{equation*}
	\leftidx{_{a}}{{\A \rtimes G}}{_b} := 	\bigoplus_{g \in G}  \leftidx{_{ga}}{{\A}}{_b}.
		\end{equation*}
	We denote by $(\alpha, g)$ an element in the term corresponding to $g \in  G$ and $\alpha \in \leftidx{_{ga}}{{\A}}{_b}$. Then
		$\deg_{\A \rtimes G} (\alpha,g) = \deg_{\A} \alpha$ and 
		$d_{\A \rtimes G}(\alpha,g) = (d_\A \alpha,g)$,
		\item The composition in $\A \rtimes G$ is given, similarly to the case of a twisted group ring, by  the formula
		\begin{equation*}
			(\alpha_1, g_1) \circ (\alpha_2, g_2) =
			(\alpha_1 \circ g_1.\alpha_2,\, g_1 g_2). 
		\end{equation*}
		\item For any $a \in \obj (\A \rtimes G)$ the identity morphism 
		of $a$ is $(\id_a, 1_G)$. 
	\end{itemize}
\end{defn}
With this definition, modules over $\A \rtimes G$ 
are $G$-equivariant modules over $\A$ in the sense of 
\cite[\S2.1.3]{GanterKapranov-SymmetricAndExteriorPowersOfCategories}. 
\begin{lemma}[\cite{GyengeKoppensteinerLogvinenko-TheHeisenbergCategoryOfACategory}, Lemma 4.41]
	\label{lemma-A-rtimes-G-modules-are-G-equiv-A-modules}
	There are mutually inverse isomorphisms of categories
	\[ \modd\text{-}(\A \rtimes G) \leftrightarrows \modd^G\text{-}\A, \]
	\[ \hperf\text{-}(\A \rtimes G) \leftrightarrows \hperf^G\text{-}\A.\]
\end{lemma}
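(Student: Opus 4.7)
The plan is to exploit a universal property of $\A \rtimes G$: every morphism $(\alpha, g) \in \leftidx{_a}{(\A \rtimes G)}{_b}$ admits the canonical factorization
\[
(\alpha, g) = (\alpha, 1_G) \circ (\id_{ga}, g),
\]
which exhibits $\A \rtimes G$ as generated by the natural DG inclusion $\iota\colon \A \hookrightarrow \A \rtimes G$, $\alpha \mapsto (\alpha, 1_G)$, together with the invertible ``shift'' morphisms $\rho^a_g := (\id_{ga}, g) \in \leftidx{_a}{(\A \rtimes G)}{_{ga}}$.

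Using the composition formula of Definition \ref{defn-equivariant-dg-category}, I would first verify the semi-direct product relations
\[
\rho^{ga}_h \circ \rho^a_g = \rho^a_{hg}, \qquad \rho^a_{1_G} = (\id_a, 1_G), \qquad \rho^b_g \circ \iota(\alpha) = \iota(g.\alpha) \circ \rho^a_g,
\]
for $\alpha \in \leftidx{_a}{\A}{_b}$ and $g, h \in G$. Together with the factorization above, these show that specifying a DG right $(\A \rtimes G)$-module $M$ is the same as specifying its restriction $N := M \circ \iota$ along $\iota$, which is a right $\A$-module, together with the family of isomorphisms $\phi^a_g := M(\rho^a_g)$ --- i.e.\ a $G$-equivariant right $\A$-module in the sense of \cite[\S2.1.3]{GanterKapranov-SymmetricAndExteriorPowersOfCategories}. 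Conversely, given such data $(N, \{\phi^a_g\})$, one defines $M(\alpha, g)$ as the composition $N(\alpha) \circ \phi^a_g$; the relations above guarantee this respects the composition rule in $\A \rtimes G$.

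To upgrade this object-level bijection to a functor of categories, I would check that a morphism of $(\A \rtimes G)$-modules is exactly an $\A$-linear natural transformation intertwining the $\phi^a_g$ --- a routine diagram chase on the generators $\iota(\alpha)$ and $\rho^a_g$. Functoriality of both constructions and the identity property of the two compositions in either order are then immediate, yielding the first isomorphism of categories.

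For the second isomorphism $\hperf\text{-}(\A \rtimes G) \leftrightarrows \hperf^G\text{-}\A$, observe that under the first equivalence the representable $(\A \rtimes G)$-modules correspond to representable $\A$-modules equipped with the natural $G$-equivariant structure coming from the shifts $\rho^a_g$; since the perfect category on either side is the smallest thick subcategory generated by the representables, the equivalence restricts to the desired isomorphism. The main bookkeeping obstacle is tracking the Koszul signs consistently through the equivariance relation and the intertwining condition on morphisms, but conceptually everything reduces to the universal property observed at the outset.
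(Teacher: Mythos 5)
The paper itself does not prove this lemma: it is imported verbatim from \cite[Lemma 4.41]{GyengeKoppensteinerLogvinenko-TheHeisenbergCategoryOfACategory}, so there is no internal proof to compare against, and your attempt has to be judged on its own. The first half of your argument is the expected one and is essentially sound: $\A \rtimes G$ is generated by $\iota(\A)$ together with the invertible morphisms $(\id,g)$, and unwinding the composition rule shows that a right $(\A \rtimes G)$-module is exactly a right $\A$-module together with a coherent family of isomorphisms intertwining the $G$-action, with module morphisms matching equivariant morphisms. (Be careful with the order of composition in your factorization of $(\alpha,g)$ and in the relation $\rho^{ga}_h \circ \rho^a_g = \rho^a_{hg}$: with the paper's reversed convention $\leftidx{_a}{\A}{_b}=\homm(b,a)$ the shift morphism must be composed on the other side, and Definition \ref{defn-equivariant-dg-category} as printed has a convention clash one must resolve before ``verifying'' the relations.)

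The genuine gap is in the second isomorphism. Your claim that ``representable $(\A \rtimes G)$-modules correspond to representable $\A$-modules equipped with the natural $G$-equivariant structure'' is false: restricting the representable $\leftidx{_a}{(\A\rtimes G)}{_{(-)}}$ along $\iota$ gives $\bigoplus_{g\in G}\leftidx{_{ga}}{\A}{_{(-)}}$, i.e.\ the \emph{induced} equivariant module, not $h_a$; a single representable $\A$-module generally admits no equivariant structure at all. Consequently $\hperf^G\text{-}\A$ is not presented as ``the thick subcategory generated by equivariant representables'' --- it is cut out by a condition on the underlying $\A$-module (h-projective and perfect; cf.\ the perfect hull of Definition 4.22 of \emph{loc.cit.}, which the paper explicitly flags as needed for this part). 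So what actually has to be proved is that under the first isomorphism an $(\A\rtimes G)$-module is h-projective and perfect over $\A\rtimes G$ if and only if its restriction to $\A$ is h-projective and perfect over $\A$. One direction uses that $\Res$ takes representables to finite direct sums of representables (since $G$ is finite); the converse uses that $|G|$ is invertible in $\kk$, so that averaging exhibits $M$ as a direct summand of $\Ind\Res M$, which is h-projective and perfect whenever $\Res M$ is. Your proposal contains neither of these steps, and in particular never uses the characteristic-zero/finiteness hypotheses on which the $\hperf$ statement genuinely depends.
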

The 
second part of the following lemma uses the notion of the perfect hull
explained in Definition 4.22 or \textit{loc.cit.}.

\bigskip
\noindent
For any $g \in G$ the autoequivalence $g\colon \A \xrightarrow{\sim} \A$ 
extends to $\A \rtimes G$:
\begin{defn}
	\label{defn-group-element-as-autoequivalence-of-the-equivariant-category}
	Let $\A$ be a small DG category and $G$ be a group acting strongly on
	$\A$. For any $g \in G$ define the autoequivalence 
	$$ g\colon \A \rtimes G \xrightarrow{\sim} \A \rtimes G $$
	to have the same action on objects as
	$g\colon \A \xrightarrow{\sim} \A$ and to act on morphisms 
	by $g(\alpha, f) = (g(\alpha), gfg^{-1})$. 
\end{defn}

\subsection{Partitions}
\label{section-partitions}

\begin{defn}
Let $n \geq 0$. An \em (unordered) partition of $n$ \rm 
is an unordered collection 
$\underline{\lambda}\coloneqq \left\{ n_1, \dots, n_m \right\}$ of strictly
positive integers $n_i$ with $\sum_{i=1}^m n_i = n$. We denote this by 
$\underline{\lambda} \vdash n$. 
\end{defn}

The integers $n_i$ are the \em parts \rm of $\underline{\lambda} $. 
We write $r(\underline{\lambda} )$ for the \em length \rm of $\underline{\lambda} $. 
By this we mean the total number of parts in $\underline{\lambda}$, i.e. $m$. 
We write $r_k(\underline{\lambda} )$ for
the number of parts of size $k$ in $\underline{\lambda} $, i.e. the number of $i$ such 
that $n_i = k$.

For $n = 0$, we use the convention that there exists a unique
partition $\underline{0}$ of $0$ with $r(\underline{0}) =
r_k(\underline{0}) = 0$. 

\begin{defn}
Let $n \geq 0$. An \em ordered partition of $n$ \rm 
is $(n_1, \dots, n_m) \in \mathbb{Z}^m_{\geq 0}$ with 
$\sum_{i=1}^m n_i = n$. We denote this by $(n_1, \dots, n_m) \vdash n$. 
\end{defn}

We write $\forget$ for the forgetful map 
from all ordered partitions of $n$ to all unordered partitions of $n$ which 
forgets the ordering of the parts. 

\begin{defn}
Let $n \geq 0$ and let $\underline{\lambda}  \vdash n$ be
an unordered partition. Define $\underline{r}(\underline{\lambda} )$
to be the ordered partition $\left((r_1(\underline{\lambda} ), \dots,
r_n(\underline{\lambda} )\right)$ of $r(\underline{\lambda} )$.  
\end{defn}

\subsection{Symmetric powers of DG categories}
\label{section-symmetric-powers}

For any small DG category $\A$ and 
any $n \geq 0$ we write $\A^{\otimes n}$ for the $n$-fold tensor
product over $\kk$, viewed as an object in the category $\A$-$\A$-bimodules, cf.  
\cite[\S6.1]{Keller-DerivingDGCategories}. In other words
$$
\leftidx{_a}{\left(\A^{\otimes n}\right)}{_b} 
:= 
\bigoplus_{a_{1}, \dots, a_{n-1} \in \A} 
\leftidx{_a}{\A}{_{a_1}} \otimes_\kk
\leftidx{_{a_1}}{\A}{_{a_2}} \otimes_\kk \dots
\otimes \leftidx{_{a_{n-1}}}{\A}{_{b}}. 
$$
For $n = 0$, we use the convention that $\A^{\otimes 0} = \kk$.  Our main object of study in this paper is introduced in the following definition. 

\begin{defn}
Let $\A$ be a small DG category.  
Let $n \geq 0$ and let $(n_1, \dots, n_m) \vdash n$ be an ordered
partition. Define 
$$ S_{n_1, \dots, n_m} : = S_{n_1} \times \dots \times S_{n_m} < S_n,  $$
and define \em $(n_1, \dots, n_m)$-th symmetric power of $\A$ \rm to be 
$$ \sym^{n_1, \dots, n_m} \A : = 
\A^{\otimes n} \rtimes \left( S_{n_1} \times \dots \times S_{n_m} \right). $$
\end{defn}
\begin{defn}
Let $\A$ be a small DG category.  
Let $n \geq 0$ and let $\underline{\lambda}  \vdash n$ be a partition. 
Define $S_{\underline{\lambda} }$ and $\sym^{\underline{\lambda} } \A$ to be the
isomorphism classes of $S_{n_1, \dots, n_m}$ and 
$\sym^{n_1, \dots, n_m} \A$ for any $(n_1, \dots, n_m)$
such that $\forget(n_1, \dots, n_m) = \underline{\lambda}$. 
\end{defn}

We make similar definitions for graded vector spaces, complexes of
vector spaces, etc. All of these can be viewed as a special case 
of the following definition, which we need to work with Hochschild
complexes: 

\begin{defn}
Let $E \in \pretriagmns(\modk)$ 
be a bounded above twisted complex over $\modk$,
cf.~\cite[\S3]{AnnoLogvinenko-UnboundedTwistedComplexes}. 
Let $n \geq 0$ and let $(n_1, \dots, n_m) \vdash n$ be an ordered
partition. Define 
\em $(n_1, \dots, n_m)$-th symmetric power of $E$ \rm to be 
$$ \sym^{n_1, \dots, n_m} E : = 
(E^{\otimes n})_{S_{n_1, \dots, n_m}} \in \pretriagmns(\modk),$$
where the index denotes taking the coinvariants under the action 
of $S_{n_1, \dots, n_m}$ which permutes the corresponding factors. 

For any unordered partition $\underline{\lambda} \vdash n$
define $\sym^{\underline{\lambda}} E$ to be the isomorphism class of 
$\sym^{n_1, \dots, n_m} E$ for any $(n_1, \dots, n_m)$
such that $\forget(n_1, \dots, n_m) = \underline{\lambda}$.
\end{defn}

\subsection{Hochschild homology of a bimodule}

Once we have the notion of a bimodule $M$ over a category $\A$, the usual 
formalism of resolutions and derived functors can be extended to the categorical
level in a reasonably straghtforward way, see 
\cite[\S2 and \S3]{Keller-DerivingDGCategories},
 and the usual definition makes sense, cf.
 \cite[\S6.8]{Keller-HochschildCohomologyAndDerivedCategories}
 for the case 
$M = \A$:

\begin{defn}
	Let $\A$ be a small DG-category and let $M \in \AmodA$. 
	The \em Hochschild homology of $\A$ with coefficients in $M$ \rm is 
	\begin{equation}
		\hochhom_\bullet(\A;M) := H^\bullet(M \ldertimes_{\A\text{-}\A} \A),
	\end{equation}
\end{defn}
See also \cite[\S1.1.3]{Loday-CyclicHomology}
\cite[\S3, Step
4]{Baranovsky-OrbifoldCohomologyAsPeriodicCyclicHomology}\cite[\S3.2]{Nordstrom-FiniteGroupActionsOnDGCategoriesAndHochschildHomology}. 
Here $\ldertimes_{\A\text{-}\A}$ is the derived functor of the DG functor
\begin{equation}
	\label{eqn-bimodule-left-right-tensor}
	\otimes_{\A\text{-}\A}\colon \AmodA \otimes_k \AmodA \rightarrow \modk 
\end{equation}
where we tensor the left $\A$-action with the right $\A$-action and vice
versa. More precisely, $\A$-$\A$-bimodules are, equivalently, right
$\Aopp \otimes_{\kk} \A$-modules or left $\A \otimes_\kk \Aopp$-modules . 
We can view them as both left and right $\Aopp \otimes_{\kk} A$-modules 
via the canonical isomorphism on objects
\begin{align*}
	\A \otimes_\kk \Aopp & \xrightarrow{\sim} \Aopp \otimes_\kk \A,
	\\
	a \otimes b \quad & \mapsto \quad b \otimes a
\end{align*}
while for morphisms on the right hand side there is a 
factor $(\pm 1)$ determined by the Koszul rule. Then
\eqref{eqn-bimodule-left-right-tensor} is the functor of tensoring 
over this module structure. Explicitly, it sends any pair $E, F \in
\AmodA$ to the complex of $\kk$-modules
\begin{small}
	\begin{equation}
		E \otimes_{\AbimA} F \coloneqq 
		\big(\bigoplus_{a, b \in obj(\A)} \leftidx{_a}E{_b} \otimes_\kk \leftidx_{b}F{_a}\big) /
		\left\{ e \otimes \alpha.f.\beta - \beta.e.\alpha \otimes f \right\}
	\end{equation}
\end{small}
for all $e \in \leftidx{_a} E {_b}$, $\alpha \in \leftidx{_b}\A{_c}$, 
$f \in \leftidx{_c}F{_d}$, $\beta \in \leftidx{_d}\A{_a}$ and quadriples
of objects $a, b, c, d$ in $\A$. Note that the term  
$e \otimes \alpha.f.\beta$ is in $\leftidx{_a}E{_b} \otimes_\kk \leftidx_{b}F{_a}$ while $\beta.e.\alpha \otimes f$ is 
in $\leftidx{_d}E{_c} \otimes_\kk \leftidx_{c}F{_d}$.

\medskip
\noindent
We can compute $\ldertimes_{\A\text{-}\A}$ by taking an h-projective
resolution in either variable. Using the standard bar-complex
resolution $\barA$ of the diagonal bimodule $\A$, see e.g. 
\cite[Section~2.11]{AnnoLogvinenko-BarCategoryOfModulesAndHomotopyAdjunctionForTensorFunctors},
we see that  $\hochhom_\bullet(\A;M)$ are isomorphic to the cohomology
of the convolution of the \em Hochschild complex 
$\hochcx_\bullet(\A;M)$ with coefficients in $M$\rm:
\begin{small}
	\begin{equation}
		\label{eqn-hochschild-complex-of-a-category-with-coefficients-in-bimod}
		\dots
		\rightarrow 
		\bigoplus_{a,b,c \in \A} 
		\leftidx{_a}{M}{_c}
		\otimes_\kk
		\leftidx{_c}{\A}{_b}
		\otimes_\kk
		\leftidx{_b}{\A}{_a}
		\rightarrow 
		\bigoplus_{a,b \in \A} 
		\leftidx{_a}{M}{_b}
		\otimes_\kk
		\leftidx{_b}{\A}{_a}
		\rightarrow 
		\bigoplus_{a \in \A} 
		\leftidx{_a}{M}{_a}
	\end{equation}
\end{small}
with the differentials defined by 
\begin{align*}
	m_0 \otimes \alpha_1 \otimes \dots \otimes \alpha_n 
	& \quad \mapsto \quad 
	\quad \quad \quad  m_0.\alpha_1 \otimes \dots
	\otimes \alpha_{n-1}\; +
	\\
	& \quad \quad \quad \quad \quad + \sum_{i = 1}^{n-1} (-1)^i \; m_0 \otimes \dots \otimes \alpha_{i} \alpha_{i+1}
	\otimes \dots \otimes \alpha_n\; + \\ 
	& \quad \quad \quad \quad \quad + (-1)^{n + |\alpha_n|(|m_0| + \dots +
		|\alpha_{n-1}|)} \alpha_n.m_0 \otimes \alpha_1 \otimes \dots
	\otimes \alpha_{n-1}
\end{align*}

 \bigskip
 \noindent
 \textbf{Main Example.} For $M = \A$ we get Hochschild homology 
  $\hochhom_\bullet(\A; \A)$. If $F: \A \to \A$ is a functor then 
we define the \em $F$-twisted Hochschild homology $\hochhom_\bullet(\A;F)$ \rm 
to be the Hochschild homology $\hochhom_\bullet(\A;\leftidx{_F}{\A})$
of $\A$ with coefficients in the bimodule $\leftidx{_F}{\A}$ induced by $F$.   

The main computation of this paper is carried out in the case when 
$F$ is the permutation functor $\mathcal{A}^{\otimes n} \to \A^{\otimes n}$
given by the cycle $t_n = (1, 2, \ldots n)$.

\subsection{Shuffles}
\label{section-shuffles}
The usual K{\"u}nneth isomorphism works for DG categories with very little change. 
As in the case of $\kk$-algebras, one starts by defining shuffles: 
\begin{defn}
	Let $p,q \in \mathbb{Z}_{\leq 0}$. Define the subset of $(p,q)$-shuffles
	\begin{equation}
		S_{p,q} := \left\{ \sigma \in S_{p+q} \;\middle|\;
		\sigma(i) < \sigma(j) \text{ if } 
		\begin{matrix}
			1 \leq i < j \leq p, \\
			\text{ or }\\
			p+1 \leq i < j \leq p+q. 
		\end{matrix}
		\right\}
	\end{equation}
	The sign $(-1)^{\sigma}$ of a shuffle $\sigma \in S_{p,q}$ is its sign
	as a permutation in $S_n$. 
\end{defn}

In other words, $(p,q)$-shuffles are the elements of the permutation
group $S_{p+q}$ which preserve relative order of the first $p$ and 
the last $q$ elements.  Later in the paper we will need a computation 
of the number defined as follows: 
\begin{defn}
	Let $p,q \in \mathbb{Z}_{\leq 0}$. Define
	$$ Y_{p,q} : = 
	\begin{cases}
		\binom{\lfloor \frac{p+q}{2} \rfloor}{\lfloor \frac{p}{2} \rfloor}
		\quad \quad & \text{ either } p \text{ or } q \text{ is even},
		\\
		0, & \text{ otherwise. }
	\end{cases}
	$$
\end{defn}

\begin{lemma}
	Let $p,q \in \mathbb{Z}_{\leq 0}$. Then 
	\begin{equation}
		\label{eqn-formula-for-Ypq}
		Y_{p,q} = \sum_{\sigma \in S_{p,q}} (-1)^{\sigma}.
	\end{equation}
\end{lemma}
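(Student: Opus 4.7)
The plan is to prove \eqref{eqn-formula-for-Ypq} by induction on $p+q$, checking that both sides satisfy the same recursion and the same base cases.

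For the base cases, when $p=0$ or $q=0$ the set $S_{p,q}$ consists only of the identity permutation, so the sum on the right of \eqref{eqn-formula-for-Ypq} equals $1$; and $Y_{0,q} = \binom{\lfloor q/2\rfloor}{0} = 1$ (and similarly $Y_{p,0}=1$) since $0$ is even, so the identity holds.

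For the inductive step I would first establish the combinatorial recursion
\begin{equation*}
\sum_{\sigma \in S_{p,q}} (-1)^\sigma \;=\; (-1)^q \sum_{\tau \in S_{p-1,q}} (-1)^\tau \;+\; \sum_{\tau \in S_{p,q-1}} (-1)^\tau
\end{equation*}
by partitioning $(p,q)$-shuffles according to the preimage of the maximal value $p+q$. Because $\sigma$ preserves the relative order of $\{1,\dots,p\}$ and of $\{p+1,\dots,p+q\}$, the shuffle condition forces $\sigma^{-1}(p+q) \in \{p,\,p+q\}$. If $\sigma(p+q)=p+q$, then $\sigma$ restricts to a $(p,q-1)$-shuffle with the same sign. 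If $\sigma(p)=p+q$, then erasing the value $p+q$ and relabeling the remaining values of the first block as $1,\dots,p-1$ gives a bijection with $S_{p-1,q}$; the sign changes by exactly $(-1)^q$, since the value $p+q$ at position $p$ forms an inversion with each of the $q$ strictly smaller entries to its right.

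Finally I would verify that the closed formula for $Y_{p,q}$ obeys the same recursion by a short case analysis on the parities of $p$ and $q$ combined with Pascal's identity. When at least one of $p,q$ is even the recursion reduces immediately to Pascal. The only case demanding a moment's care is when $p$ and $q$ are both odd: here $Y_{p,q}=0$, while $(-1)^qY_{p-1,q}+Y_{p,q-1}$ must also vanish. This works because $(p-1,q)$ and $(p,q-1)$ both fall into the ``one even, one odd'' regime and the formula assigns them the common value $\binom{(p+q-1)/2}{(p-1)/2}$, which cancels against itself through the sign $(-1)^q = -1$. This parity bookkeeping is the only mild subtlety in what is otherwise a routine induction, and I do not anticipate a serious obstacle.
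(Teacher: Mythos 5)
Your proof is correct and follows essentially the paper's argument: induction on $p+q$, splitting the shuffles according to where an extremal value goes, and checking that $Y_{p,q}$ satisfies the resulting Pascal-type recursion, with the trivial base cases $p=0$ or $q=0$. The only (harmless) difference is that you peel off the maximal value $p+q$, giving $Y_{p,q}=(-1)^q Y_{p-1,q}+Y_{p,q-1}$, whereas the paper peels off the value $1$, giving the mirror recursion $Y_{p,q}=Y_{p-1,q}+(-1)^p Y_{p,q-1}$ (which it also reuses later in Lemma \ref{lemma-technical-results-on-Tpqmn}); both verifications go through, up to the cosmetic point that in the doubly-odd case the common value of $Y_{p-1,q}$ and $Y_{p,q-1}$ is $\binom{(p+q-2)/2}{(p-1)/2}$ rather than the non-integral $\binom{(p+q-1)/2}{(p-1)/2}$ you wrote.
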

\begin{proof}
	We prove by induction on $p+q$.
	All $(p,q)$-shuffles $\sigma$ can be divided into those where the
	$\sigma^{-1}(1) = 1$ and $\sigma^{-1}(1) = p+1$. The former 
	correspond bijectively to $(p-1,q)$-shuffles of the same parity, 
	the latter correspond bijectively to $(p,q-1)$-shuffles whose
	parity differs by $(-1)^{p}$. Thus 
	$$ \sum_{\sigma \in S_{p,q}} (-1)^{\sigma}
	= \sum_{\sigma \in S_{p-1,q}} (-1)^{\sigma} + (-1)^p
	\sum_{\sigma \in S_{p,q-1}} (-1)^{\sigma}. $$
	By direct computation, we also have 
	\begin{equation}
		\label{eqn-recursive-formula-for-Ypq}
		Y_{p,q} = Y_{p-1,q} + (-1)^p Y_{p,q-1}.
	\end{equation}
	It remains to establish \eqref{eqn-formula-for-Ypq}
	in the cases where either $p=0$ or $q=0$, which is trivial. 
\end{proof}

We will also need below a slight generalisation of shuffles:
\begin{defn}
	Let $P,Q \subset \left\{ 1,\dots,n \right\}$ be two disjoint subsets. 
	Define the subset of $(P,Q)$-shuffles
	\begin{equation}
		S_{P,Q} := \left\{ \sigma \in S_{n} \;\middle|\;
		\begin{matrix}
			\sigma(i) < \sigma(j) \quad \text{ if } \quad 
			i < j \text{ with } i,j \in P \text{ or } i,j \in Q, \quad \quad \\
			\sigma(i) = i \quad \quad \;\; \text{ if } \quad i \neq P \cup Q
			\quad \quad\quad \quad\quad \quad\quad \quad\quad  \quad \quad
		\end{matrix}
		\right\}
	\end{equation}
	The sign $(-1)^{\sigma}$ of a shuffle $\sigma \in S_{P,Q}$ is its sign
	as a permutation in $S_n$. 
\end{defn}

\subsection{K{\"u}nneth isomorphism}
\label{subsection-kunneth-isomorphism}

Let $\A$ and $\B$ be two DG categories. The well-known K{\"u}nneth isomorphism 
\begin{equation} 
\hochhom_\bullet(\A; M) \otimes_{\kk}  \hochhom_\bullet(\B; N)
\simeq 
\hochhom_\bullet(\A \otimes_{\kk} \B; M \otimes_\kk N)
\end{equation}
can be written in terms of the Hochschild complexes
\eqref{eqn-hochschild-complex-of-a-category-with-coefficients-in-bimod} as follows. First we
define it on the level of bar complexes:
\begin{defn}
Let $\A$ and $\B$ be two DG categories. Let $\barA$ and $\barB$ be
their bar-complexes viewed as twisted complexes of $\A$-$\A$- 
and $\B$-$\B$-bimodules respectively
\cite[Defn.
2.24]{AnnoLogvinenko-BarCategoryOfModulesAndHomotopyAdjunctionForTensorFunctors}.
Similarly, let $\overline{\A \otimes_\kk \B}$ be the bar complex of
the tensor product category $\A \otimes_\kk \B$. 

Define the Eilenberg-Zilber map 
$$ \ezmap\colon
\barA \otimes_\kk \barB \rightarrow \overline{\A \otimes_\kk \B} $$
of twisted complexes of $\A \otimes_\kk \B$-$\A \otimes_\kk \B$-bimodules
to comprise the components
\begin{equation}
\label{eqn-kunneth-morphism-for-bar-complexes}
\A^{\otimes n} \otimes_\kk \B^{\otimes m}
\xrightarrow{\sum_{\sigma \in S_{n-2,m-2}} (-1)^{\sigma}
\iota_\sigma}
\A^{\otimes n+m-2} \otimes_\kk \B^{\otimes n+m-2}
\xrightarrow{\sim}
(\A\otimes_\kk \B)^{\otimes n+m-2}
\end{equation}
where $n,m \geq 2$ and  
for any $(n-2, m-2)$-shuffle $\sigma \in S_{n-2,m-2}$ the map 
$$ \iota_\sigma\colon 
\A^{\otimes n} \otimes_\kk \B^{\otimes m}
\rightarrow 
\A^{\otimes n+m-2} \otimes_\kk \B^{\otimes n+m-2}
$$
is given first by the repeated application of the identity insertion map 
\begin{align*}
\A^{\otimes 2} & \rightarrow \A^{\otimes 3} \\
\alpha_1 \otimes \alpha_2 & \mapsto \alpha_1 \otimes \id_{a_1} \otimes \alpha_2
\quad \quad \forall\; \alpha_i \in \homm_{\A}(a_i, a_{i-1}),
\end{align*}
to insert $m-2$ identity maps into $\A^{\otimes n}$ onto the positions 
$\sigma((n-2)+1)+1$, $\sigma((n-2)+2)+1$, \dots, $\sigma((n-1)+m-2)+1$ in 
$\A^{\otimes(n+m-2)}$, 
and then inserting $n-2$ identity maps 
onto the positions $\sigma(1)+1, \dots, \sigma(n-2)+1$
in $\B^{\otimes (n+m-2)}$.
\end{defn}

Since for any DG category $\C$ and a bimodule $L$ we have  
$\hochcx_\bullet(\C; M) \simeq \barC \otimes_{\CbimC} L$, we
can now define the Eilenberg-Zilber morphism for the Hochschild complexes:
\begin{defn}
 Let $\A$ and $\B$ be two DG categories. The Eilenberg-Zilber
morphism 
\begin{equation}
\label{eqn-kunneth-morphism-for-hochschild complexes}
\ezmap\colon \hochcx_\bullet(\A; M) \otimes_\kk \hochcx_\bullet(\B; N)
\rightarrow \hochcx_\bullet(\A \otimes \B; M \otimes_\kk N), 
\end{equation}
is the composition
\begin{equation}
\begin{tikzcd}[row sep = 0.5cm]
\left(\barA \otimes_{\AbimA} M\right)
\otimes_\kk
\left(\barB \otimes_{\BbimB} N\right)
\ar{d}{\sim}
\\
\left(\barA \otimes_{\kk} \barB \right) 
\otimes_{\A\otimes_\kk\B\text{-} \A\otimes_\kk\B}
\left(M \otimes_{\kk} N \right) 
\ar{d}{\eqref{eqn-kunneth-morphism-for-bar-complexes}}
\\
\overline{\A \otimes_{\kk} \B}
\otimes_{\A\otimes_\kk\B\text{-}\A\otimes_\kk\B}
\left(M \otimes_{\kk} N \right). 
\end{tikzcd}
\end{equation}
\end{defn}

Explicitly, for any Hochschild chains
$$\underline{\alpha} = m \otimes \dots \otimes \alpha_n \in \hochcx_n(\A; M)$$  
$$\underline{\beta} = n \otimes \dots \otimes \beta_m \in \hochcx_m(\B; N)$$
we have
\begin{small}
\begin{equation}
\label{eqn-explicit-formula-for-kunneth-morphism}
\ezmap\left(\underline{\alpha} \otimes \underline{\beta}\right)
= 
\sum_{\sigma \in S_{n,m}} (-1)^{\sigma}
(-1)^{\deg_\sigma(\underline{\alpha},\underline{\beta})}
(m \otimes n) \otimes \dots \otimes (\alpha_i \otimes \id)
\otimes \dots (\id \otimes \beta_j) \otimes \dots
\end{equation}
\end{small}
where in the summand indexed by $\sigma \in S_{n,m}$ the
factors $(\alpha_i \otimes \id)$  and $(\id \otimes \beta_j)$ occur
in the positions $\sigma(i)$ and $\sigma(n+j)$, respectively. The 
second sign is computed by setting 
$d_\sigma(\underline{\alpha},\underline{\beta})$ to be the sum of 
$\deg(\alpha_i) \deg(\beta_j)$ for all $0 \leq i \leq n$ and 
$0 \leq j \leq m$ such that the factor containing $\alpha_i$ occurs to 
the right of the factor containing $\beta_j$. 

As in \cite[\S4.2]{Loday-CyclicHomology} 
the morphism $\ezmap$ induces isomorphism of the Hochschild homology groups. 
The inverse isomorphism is induced by the  Alexander-Whitney map
\cite[Corollary X.7.2]{MacLane-Homology}

$$
\awmap ((a_0\otimes b_0) \otimes (a_1 \otimes b_1) \otimes \ldots \otimes
(a_n \otimes b_n))= 
$$
$$
\sum_{p=0}^n (-1)^\eqref{eqn-sign-for-AW-map}
\big(a_0  a_1 a_2 \ldots a_p \otimes a_{p+1} \otimes
\ldots \otimes a_n\big) \otimes \big( b_{p+1} \ldots b_n b_0 \otimes b_1 \otimes \ldots \otimes b_p \big)
$$
where the sign is given by
\begin{equation}
\label{eqn-sign-for-AW-map}
p(n-p)+\sum\limits_{i=0}^{n-1} \deg b_i\left( \sum\limits_{j=i+1}^n \deg a_j  \right)
+\left(\sum\limits_{i=p+1}^n \deg b_i \right) \left(\sum\limits_{j=0}^p \deg b_j \right).
\end{equation}

\subsection{Shuffle product with the constants}

We need to introduce the following notion:
\begin{defn}
Let $\A$ be a DG category. Let 
$$ \underline{\alpha} = \alpha_0 \otimes \alpha_1 \otimes \dots \otimes \alpha_n \in
\hochcx_{n}(\A) $$
be a Hochschild $(n+1)$-chain in $\A$ and let 
$$ \underline{\kappa} = \kappa_0 \otimes \kappa_1 \otimes \dots
\otimes \kappa_m \in \hochcx_{m}(\kk) $$
be a Hochschild $(m+1)$-chain in $\kk$. 
Define the shuffle 
\begin{equation}
\shfl\left(\underline{\alpha}, \underline{\kappa}\right) \in 
\hochcx_{n+m}(\A)
\end{equation}
to be the image of $\underline{\alpha} \otimes \underline{\kappa}$
under the composition 
$$ \hochcx_\bullet(\A) \otimes \hochcx_\bullet(\kk) \xrightarrow{K}
\hochcx_\bullet(\A \otimes_\kk \kk) \xrightarrow{\sim}
\hochcx_\bullet(\A) $$
where the first map is the K{\"u}nneth morphism 
\eqref{eqn-kunneth-morphism-for-hochschild complexes}
and the second map is induced by the canonical DG category isomorphism 
$\A \otimes_\kk \kk \xrightarrow{\sim} \A$. 
\end{defn}

Our principal application is the case $\underline{\kappa} =
1^{\otimes (m+1)}$. Explicitly, 
$\shfl\left(\underline{\alpha}, 1^{\otimes (m+1)} \right)$ 
is the sum over all $\sigma \in S_{n,m}$ of 
the $n+m$-chains in $\A$ obtained by using $\sigma$ to shuffle $m$ 
appropriate identity morphisms into $\underline{\alpha}$. The sign of
each summand is $(-1)^\sigma$. 

\section{Long cycle case}
\label{section-long-cycle-case}

Let $\A$ be a small DG category and let $n \geq 1$. The permutation group 
$S_n$ acts on the tensor power $\A^{\otimes n}$ by permuting the factors. 
For example, the long cycle $t_n:= (1 \dots n) \in S_n$ acts by the
isomorphism of categories
$$ t_n \colon \A^{\otimes n} \rightarrow \A^{\otimes n} $$
which maps
$$ a_1 \otimes a_2 \otimes \dots \otimes a_n \rightarrow a_n \otimes a_1 \otimes
\dots a_{n-1} $$
on objects and correspondingly on the morphisms. We use the same notation 
$t_n$ for the above permutation functor and write just $t$ for $t_n$ 
when no confusion is possible. 
Since the cycle $t_n$ commutes with itself, the isomorphism functor induced
by it induces an isomorphism  
$$ t_n \colon \hochcx_\bullet(\A^{\otimes n};t) \xrightarrow{\sim} 
\hochcx_\bullet(\A^{\otimes n};t).$$
Let $\hochcx_\bullet(\A^{\otimes n};t_n)_{t_n}$ denote the coinvariants of
the action of $t$, that is, the vector space quotient by the subspace
generated by $\underline{\alpha} - t_n.\underline{\alpha}$ for all 
$\underline{\alpha} \in \hochcx_\bullet(\A^{\otimes n};t_n)$. 

The main result of this section is
the construction of two mutually inverse homotopy equivalences 
\begin{equation}
\begin{tikzcd}
\hochcx_\bullet(\A^{\otimes n};t_n)_{t_n}
\ar[shift left = 1ex]{r}{f}
&
\hochcx_\bullet(\A).
\ar[shift left = 1ex]{l}{g}
\end{tikzcd}
\end{equation}

\subsection{Notation}

For a chain
$$ \underline{\alpha}_0 \otimes \dots \otimes
\underline{\alpha}_{m-1} \in \hochcx_{m-1}(\A^n;t)$$ 
we will shift indexing and decompose it 
 as a product of basic chains as follows
\begin{equation}
\label{eqn-basic-chain-in-hochcx-A^n}
\underline{\alpha}_i = \alpha_{(i+1)1} \otimes \dots \otimes
\alpha_{(i+1)n} 
\end{equation}
for some
$$ a_1, \dotsm, a_{nm+1} \in \A \text{ with } a_1 = a_{nm+1},$$
$$ \alpha_{ij} \in
\leftidx{_{a_{(j-1)m+i+1}}}\A{_{a_{(j-1)m+i}}}
$$ 
The index shift in \eqref{eqn-basic-chain-in-hochcx-A^n} is
due to us wanting to use the matrix notation:
the chain above can be visualised as 
\begin{equation}
\label{eqn-m-times-n-hochschild-chain-with-the-composable-order-indicated}
\begin{tikzcd}[row sep = 0.15cm, column sep = 0.15cm]
(\alpha_{11} 
\ar[dashed]{dd}
\ar[dashed, rounded corners, to path={(-4.25,2.50)
|- ([yshift=2.25ex]\tikztostart.north) -- (\tikztostart)}]
& \otimes &
\alpha_{12}
\ar[dashed]{dd}
& \otimes &
\dots
\ar[dashed]{dd}
& \otimes &
\alpha_{1n})
\ar[dashed]{dd}
\\
& & & \bigotimes & & & 
\\
(\alpha_{21} 
\ar[dashed]{dd}
& \otimes &
\alpha_{22}
\ar[dashed]{dd}
& \otimes &
\dots
\ar[dashed]{dd}
& \otimes &
\alpha_{2n})
\ar[dashed]{dd}
\\
& & & \bigotimes & & & 
\\
\dots 
\ar[dashed]{dd}
& \dots & 
\dots 
\ar[dashed]{dd}
& \dots & 
\dots 
\ar[dashed]{dd}
& \dots & 
\dots
\ar[dashed]{dd}
\\
& & & \bigotimes & & & 
\\
(\alpha_{m1} 
\ar[dashed, rounded corners, to
path={(\tikztostart.south) |- (-2.2,-3.25) -- 
(-2.2,3.25) -| (\tikztotarget.north)}]{rruuuuuu}
& \otimes &
\alpha_{m2}
\ar[dashed, rounded corners, to
path={(\tikztostart.south) |- (0,-3.25) -- 
(0,3.25) -| (\tikztotarget.north)}]{rruuuuuu}
& \otimes &
\dots 
\ar[dashed, rounded corners, to
path={(\tikztostart.south) |- (2.025,-3.25) -- 
(2.025,3.25) -| (\tikztotarget.north)}]{rruuuuuu}
& \otimes &
\alpha_{mn}). 
\ar[dashed, rounded corners, to path={
-- ([yshift=-2.25ex]\tikztostart.south) -|
(4.25,-2.0)}]
\ar[dashed, rounded corners, to path={
(4.25,-2.0) -- (-4.25,2.50)}]
\end{tikzcd}
\end{equation}
where the dotted line denotes the order in which 
the morphisms $\alpha_{ij}$ can be composed in. 
We denote such chain simply by the $m \times n$ matrix
\begin{equation}
\begin{pmatrix}
\alpha_{11} & \alpha_{12} & \dots & \alpha_{1n}  
\\
\alpha_{21} & \alpha_{22} & \dots & \alpha_{2n}  
\\
& & \dots & 
\\
\alpha_{m1} & \alpha_{m2} & \dots & \alpha_{mn}. 
\end{pmatrix}
\end{equation}

Note that for $n = 1$ we simply have 
$$\hochcx_{\bullet}(\A^n;t) = \hochcx_{\bullet}(\A),$$
so the chain
$$ \alpha_1 \otimes \dots \alpha_{m} \in \hochcx_{m-1}(\A) $$
is represented in the above notation by the column vector
\begin{equation}
\begin{pmatrix}
\alpha_{1} \\
\alpha_{2} \\
\dots
\\
\alpha_{m} 
\end{pmatrix}.
\end{equation}
Again, to facilitate the matrix notation we switched our notation 
for a chain in   $\hochcx_{m-1}(\A)$ from 
$\alpha_0 \otimes \dots \otimes \alpha_{m-1}$ to $\alpha_1 \otimes \dots \otimes \alpha_{m}$.

For example, we write a basic $4$-chain in $\hochcx_3(\A)$ as 
\begin{equation*}
\left(\begin{matrix}
\alpha_1 
\\
\alpha_2 
\\
\alpha_3 
\\
\alpha_4 
\end{matrix}
\right).
\end{equation*}
Each $\alpha_i$ is a morphism $a_{i+1} \rightarrow a_{i}$ for some
$a_1, \dots, a_5 \in \A$ with $a_5 = a_1$:
\begin{equation*}
a_1 \xrightarrow{\alpha_4} a_4 
\xrightarrow{\alpha_3} a_3 \xrightarrow{\alpha_2} a_2
\xrightarrow{\alpha_1} a_1.
\end{equation*}
Similarly, we write a basic $3$-chain in $\hochcx_{2}(\A^2;t)$ as 
\begin{equation*}
\left(\begin{matrix}
\alpha_{11} 
&
\alpha_{12} 
\\
\alpha_{21} 
&
\alpha_{22} 
\\
\alpha_{31} 
&
\alpha_{32}
\end{matrix}
\right).
\end{equation*}
Each $\alpha_{ij}$ is a morphism $a_{(i+1)j} \rightarrow a_{ij}$ for
some $a_{11}, a_{12}, \dots, a_{41}, a_{42} \in \A$ with $a_{41} =
a_{12}$ and $a_{42} = a_{11}$:
$$
a_{11} 
\xrightarrow{\alpha_{32}} a_{32}
\xrightarrow{\alpha_{22}} a_{22}
\xrightarrow{\alpha_{12}} a_{12}
\xrightarrow{\alpha_{31}} a_{31}
\xrightarrow{\alpha_{21}} a_{21}
\xrightarrow{\alpha_{11}} a_{11}.
$$

\subsection{Map $f$}
\label{section-map-f}

\begin{defn}
\label{defn-the-map-f}
Let $\A$ be a small DG category. 
For each $n \geq 1$ define the map 
$$ f \colon 
\hochcx_\bullet(\A^{\otimes n};t)_t
\rightarrow 
\hochcx_\bullet(\A)
$$
by setting for each $m \geq 1$ the map 
$f \colon 
\hochcx_{m-1}(\A^{\otimes n};t)_t
\rightarrow 
\hochcx_{m-1}(\A)$ to be
\begin{equation}
\label{eqn-quasi-iso-f-long-cycle}
\begin{pmatrix}
\alpha_{11} & \alpha_{12} & \dots & \alpha_{1n}  
\\
\alpha_{21} & \alpha_{22} & \dots & \alpha_{2n}  
\\
& & \dots & 
\\
\alpha_{m1} & \alpha_{m2} & \dots & \alpha_{mn}. 
\end{pmatrix} 
\quad \mapsto \quad 
\frac{1}{n}
\sum_{s = 1}^n
\begin{pmatrix}
\alpha_{1(s+1)} \dots \alpha_{m{s-1}}\alpha_{1s} \\
\alpha_{2s} \\
\dots
\\
\alpha_{ms} 
\end{pmatrix}
\end{equation}
 
\end{defn}

\subsection{Map $g$}
\label{section-map-g}

\begin{defn}
\label{defn-the-map-g}
Let $\A$ be a small DG category. For each $n \geq 1$ define the map 
$$ g \colon 
\hochcx_\bullet(\A)
\rightarrow 
\hochcx_\bullet(\A^{\otimes n};t)_t
$$
by setting for each $m \geq 1$ the map 
$g \colon 
\hochcx_{m-1}(\A)
\rightarrow 
\hochcx_{m-1}(\A^{\otimes n};t)_t$
to be
\begin{equation}
\label{eqn-quasi-iso-g-long-cycle}
\begin{pmatrix}
\alpha_{1} \\
\alpha_{2} \\
\dots
\\
\alpha_{m} 
\end{pmatrix}
\quad \mapsto \quad 
\sum_{c \in \left\{1,\dots,n\right\}^{m}
\\
\text{ with } c_1 = 1
}
(-1)^{\sigma_{c}}
\begin{pmatrix}
\beta_{11} & \beta_{12} & \dots & \beta_{1n}  
\\
\beta_{21} & \beta_{22} & \dots & \beta_{2n}  
\\
& & \dots & 
\\
\beta_{m1} & \beta_{m2} & \dots & \beta_{mn}.
\end{pmatrix}
\end{equation}
Here: 
\begin{itemize}
\item The summation is taken over all $c = (c_1, \dots, c_m)$ with 
$c_i \in \left\{1, \dots, n\right\}$ and $c_1 = 1$.
We think of each $c_i$ as a choice of a column position in the $i$-th row 
of the matrix $(\beta_{ij})$ and we always choose 
the first position in the first row.
\item We define $\sigma_c \in S_m$ to be 
the permutation where $\sigma_c(i)$ is 
the number of $c_k$ with $c_k < c_i$ or with
$c_k = c_i$ and $k \leq i$. 
\item We define
\begin{equation*}
\beta_{ij} := 
\begin{cases}
\alpha_{\sigma_c(i)} \quad \quad j = c_i, \\ 
\id \quad \quad \; j \neq c_i, \\ 
\end{cases}
\end{equation*}
\end{itemize}
\end{defn}

This definition can be understood as follows: take the matrix
$(\beta_{ij})$, start at $\beta_{11}$, and follow the composable
order of its entries (down each column and then to the top of 
the column to its right), placing $\alpha_1, \dots, \alpha_n$, 
in that order, in the chosen column position in each row. The remaining
entries are filled with the identity maps. We obtain a matrix 
each whose row contains exactly one $\alpha_i$. 
The permutation $\sigma_{c}$ corresponds to the 
new ordering on $\alpha_i$'s given by counting from the top to the bottom row.

For example, for $n = 2$ and $m = 4$ the map $g$ is
the map 
\begin{align*}
\begin{pmatrix}
\alpha_1
\\
\alpha_2
\\
\alpha_3 
\\
\alpha_4
\end{pmatrix}
\mapsto 
\quad
&
\begin{pmatrix}
\alpha_1 & \id
\\
\alpha_2 & \id
\\
\alpha_3 & \id
\\
\alpha_4 & \id
\end{pmatrix}
+
\begin{pmatrix}
\alpha_1 & \id
\\
\alpha_2 & \id 
\\
\alpha_3 & \id   
\\
\id & \alpha_4 
\end{pmatrix}
-
\begin{pmatrix}
\alpha_1 & \id
\\
\alpha_2 & \id 
\\
\id & \alpha_4 
\\
\alpha_3 & \id  
\end{pmatrix}
+
\begin{pmatrix}
\alpha_1 & \id
\\
\alpha_2 & \id 
\\
\id & \alpha_3 
\\
\id & \alpha_4 
\end{pmatrix}
+
\\
+ 
&\begin{pmatrix}
\alpha_1 & \id
\\
\id & \alpha_4 
\\
\alpha_2 & \id
\\
\alpha_3 & \id
\end{pmatrix}
-
\begin{pmatrix}
\alpha_1 & \id
\\
\id & \alpha_3 
\\
\alpha_2 & \id
\\
\id & \alpha_4
\end{pmatrix}
+
\begin{pmatrix}
\alpha_1 & \id
\\
\id & \alpha_3 
\\
\id & \alpha_4
\\
\alpha_2 & \id
\end{pmatrix}
+
\begin{pmatrix}
\alpha_1 & \id
\\
\id & \alpha_2 
\\
\id & \alpha_3
\\
\id & \alpha_4 
\end{pmatrix}
\end{align*}

\subsection{Homotopy $\Phi$}

We first need the following auxilliary construction:

\begin{defn}
\label{defn-cut-and-slice-matrix-operation-bpqdrc}
Let $m,n \geq 1$. We define the map 
\begin{equation}
B_{p,q,d,r,\underline{c}}\colon 
\hochcx_{m-1}(\A^{\otimes n})  \rightarrow 
\hochcx_{m}(\A^{\otimes n})
\end{equation}
parametrised by
\begin{itemize}
\item $p \in \left\{1, \dots, m\right\}$,
\item $q \in \left\{2, \dots, n\right\}$,
\item $d \in \left\{1, \dots, p\right\}$, 
\item $r \in \left\{0, \dots, d-1\right\}$ if $q < n$ and $r = 0$ if $q = n$, 
\item $\underline{c} = (c_1, \dots, c_r) \in \left\{ 1, \dots, n-q \right\}^r$,
\end{itemize}
to be the map 
which sends any chain
\begin{equation*}
\underline{\alpha}
:= 
\begin{pmatrix}
\alpha_{11} & \alpha_{12} & \dots & \alpha_{1n}  
\\
\alpha_{21} & \alpha_{22} & \dots & \alpha_{2n}  
\\
& & \dots & 
\\
\alpha_{m1} & \alpha_{m2} & \dots & \alpha_{mn}. 
\end{pmatrix} 
\end{equation*}
to the chain $\underline{\beta} = (\beta_{ij})$ defined by 
\\
\begin{tabular}{|l|l|l|l|}
\hline
Row group:
&
$i$
&
$j$
&
$\beta_{ij}$
\\
\hline
$1$
&
$1$ 
&
$1$
&
$\alpha_{(p+1)q}\dots\alpha_{mn}\alpha_{11}$
\\
&
&
otherwise
&
$\id$
\\
\hline
$2$
&
$2,\dots,r+1$
&
$c_{i-1}$
&
$\alpha_{\left(\sigma_{\underline{c}}(i-1)+1\right)1}$
\\
&
&
otherwise
&
$\id$
\\
\hline
$3$
&
$r+2, \dots, d$
& 
$n-q+1$
&
$\alpha_{i1}$
\\
& 
&
otherwise
&
$\id$ 
\\
\hline
$4$
&
$d+1, \dots, m+d-p$
&
$n-q+2,\dots, n$
&
$\alpha_{(i+p-d)(j-n+q-1)}$
\\
&
&
otherwise
& 
$\id$  
\\
\hline
$5$
&
$(m+d-p+1)$
& 
$n-q+2,\dots,n$
&
$\alpha_{1(j-n-q)}\dots\alpha_{d(j-n-q)}$  
\\
&
&
otherwise
&
$\id$  
\\
\hline
$6$
&
$m+d-p+2, \dots, m+1$
& 
$n-q+1,\dots,n$  
&
$\alpha_{(i-m+p-1)(j-n+q)}$ 
\\
&
& 
otherwise
&
$\id$ 
\\
\hline
\end{tabular}
\\
where $\sigma_{\underline{c}} \in S_r$, similar to 
Defn.~\ref{defn-the-map-g}, is the permutation of the rows
which arises when we distribute  
$\alpha_{21}, \dots, \alpha_{(r+1)1}$ in composable order
onto the positions in the rows $2,\dots,r+1$ chosen by
$\sigma_{\underline{c}}$. 
\end{defn}

\begin{defn}
\label{defn-map-phi-n}
For each $n \geq 1$ define a degree $-1$ map 
$$ \Phi_n\colon \hochcx_\bullet(\A^{\otimes n}; t) \rightarrow \hochcx_\bullet(\A^{\otimes n}; t) $$
to be the map 
\begin{equation}
\label{eqn-defn-of-psi-n}
\frac{1}{n}\sum_{s=1}^n \Phi'_n \circ t^{s-1}, 
\end{equation}
where $\Phi'_n$ is the map
which sends any chain 
\begin{equation*}
\underline{\alpha}
:= 
\begin{pmatrix}
\alpha_{11} & \alpha_{12} & \dots & \alpha_{1n}  
\\
\alpha_{21} & \alpha_{22} & \dots & \alpha_{2n}  
\\
& & \dots & 
\\
\alpha_{m1} & \alpha_{m2} & \dots & \alpha_{mn}. 
\end{pmatrix} 
\end{equation*}
to
\begin{equation}
\label{eqn-defn-of-psi'-n}
\sum_{p, q, d, r, \underline{c}}
\sum_{\tau }
\sum_{\upsilon}
(-1)^{\sigma_{\underline{c}}}
(-1)^{\tau}
(-1)^{\upsilon} 
(-1)^{m+(m+d)(p+d)}
\tau \upsilon B_{p,q,d,r,\underline{c}} \left( \underline{\alpha} \right). 
\end{equation}
where the summation ranges for $p,q,d,r, \underline{c}$ are as in
Definition \ref{defn-cut-and-slice-matrix-operation-bpqdrc}, 
$\tau$ ranges over  
$S_{\left\{r+2, \dots, m-p+d\right\}, \left\{d+1, \dots,
p\right\}, m+1}$, $\upsilon$ ranges over  
$S_{\left\{2, \dots, r+1\right\}, \left\{r+2, \dots, m+1\right\},
m+1}$.
The shuffles $\tau$ and $\upsilon$ act by permuting
the rows of the matrix form of the chain. Finally, 
the permutation $\sigma_{\underline{c}} \in S_r$ is as in  
Definition \ref{defn-cut-and-slice-matrix-operation-bpqdrc}. 
\end{defn}

\begin{theorem}
\label{theorem-phi-is-the-homotopy-of-gf-minus-id}
We have an equality of maps 
$\hochcx_\bullet(\A^{\otimes n}; t)_t 
\rightarrow \hochcx_\bullet(\A^{\otimes n}; t)_t$:
$$ d\Phi_n = g_n f_n - \id. $$
\end{theorem}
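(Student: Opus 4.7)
The plan is to verify the identity by direct computation of $d\Phi_n$, organizing the terms arising from the Hochschild differential and matching the surviving ones against $g_n f_n - \id$. Since $\Phi_n$ is defined as the $t$-average of $\Phi_n'$ and we work in the coinvariants $\hochcx_\bullet(\A^{\otimes n};t)_t$, the averaging can be absorbed into passing to the quotient, so it suffices to compute $d\Phi_n'(\underline{\alpha})$ modulo the image of $1-t$ and compare with $g_n f_n(\underline{\alpha}) - \underline{\alpha}$, whose image in the coinvariants is $t$-symmetrized automatically.

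First I would apply the Hochschild differential to each summand $\tau\upsilon B_{p,q,d,r,\underline{c}}(\underline{\alpha})$ of \eqref{eqn-defn-of-psi'-n}. The differential replaces a consecutive pair of entries in the composable cyclic order \eqref{eqn-m-times-n-hochschild-chain-with-the-composable-order-indicated} by their composition, plus the wrap-around term. I would classify the resulting terms by where in the matrix the composition takes place: either within one of the six row groups of Defn.~\ref{defn-cut-and-slice-matrix-operation-bpqdrc}, or at the boundary between two consecutive row groups, or the outer wrap. Compositions of two identity entries vanish, so only compositions involving an $\alpha$-entry survive from each case.

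The bulk of the work is showing these surviving terms cancel in pairs except for those that assemble into $g_n f_n(\underline{\alpha}) - \underline{\alpha}$. A composition within row group 4 or 6 paired with a shuffle $\tau'$ or $\upsilon'$ differing from the original by a transposition of adjacent rows produces a term of opposite sign; this is the standard Eilenberg--Zilber shuffle cancellation mechanism. Boundary compositions between adjacent row groups should be matched by incrementing or decrementing the parameters $(p, q, d, r, \underline{c})$: e.g.\ composing the last entry of row group 2 with the first of row group 3 reproduces a $B$-term with $r$ replaced by $r+1$, and the sign factor $(-1)^{\sigma_{\underline{c}}}(-1)^{m + (m+d)(p+d)}$ is designed so that this pairing has opposite signs. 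The only terms that fail to pair are those coming from (a) the outer wrap-around composition of row groups 1 and 6, which after reassembling the shuffled rows yields precisely the matrices in \eqref{eqn-quasi-iso-g-long-cycle} with first-row composition $\alpha_{1(s+1)}\dots\alpha_{1s}$ — matching $g_n f_n(\underline{\alpha})$ under the identification of the distribution patterns $(p,d,r,\underline{c})$ with the column choices $c$ of Defn.~\ref{defn-the-map-g} — and (b) the extreme case $p=d$, $q=n$, $r=0$ with trivial $\tau,\upsilon$, which collapses to $-\underline{\alpha}$.

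The main obstacle will be the sign bookkeeping. The combinatorial data $(p,q,d,r,\underline{c},\tau,\upsilon)$ is rich, and while conceptually each cancellation is an elementary bijection of parameters, verifying that the prefactor $(-1)^{\sigma_{\underline{c}}}(-1)^\tau(-1)^\upsilon (-1)^{m+(m+d)(p+d)}$ produces exactly opposite signs in each of the boundary cases between row groups requires a careful case-by-case check. A useful reduction would be to first prove the statement for the long cycle $t$ acting on $\A^{\otimes 1}$ (where the matrix degenerates to a column and most parameters vanish) as a sanity check, then extend to general $n$ by induction on $q$, exploiting that the parameters $p, d, r, \underline{c}$ are intrinsic to the slicing while $q$ controls the width of the extracted block.
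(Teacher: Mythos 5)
Your overall strategy --- differentiate each summand $\tau\upsilon B_{p,q,d,r,\underline{c}}(\underline{\alpha})$ of \eqref{eqn-defn-of-psi'-n} and cancel term by term, with the leftovers assembling into $g_nf_n(\underline{\alpha})-\underline{\alpha}$ --- is indeed the shape of the argument, but as written the plan has two genuine gaps. First, you have misidentified the differential you need to apply. A chain in $\hochcx_{m-1}(\A^{\otimes n};t)$ is a tensor product of \emph{rows}, each row being a morphism in $\A^{\otimes n}$; the Hochschild differential merges two \emph{adjacent rows} (multiplying their entries columnwise, with Koszul signs), and the wrap-around term is twisted by $t$, i.e.\ it cyclically shifts the columns before composing the last row into the first. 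It does not compose consecutive single entries along the composable snake order of \eqref{eqn-m-times-n-hochschild-chain-with-the-composable-order-indicated}; that would be the differential of a flattened chain in $\hochcx_{mn-1}(\A)$, which is a different complex. Relatedly, ``compositions of two identity entries vanish'' is false: the complex in use is the standard, un-normalized one, and $\id\circ\id=\id$ survives. The correct case analysis must therefore be organized by which pair of adjacent rows is merged and to which of the six row groups of Defn.~\ref{defn-cut-and-slice-matrix-operation-bpqdrc} those rows belong, not by positions along the snake path.

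Second, and more seriously, the identity $d\Phi_n=g_nf_n-\id$ is an identity in the Hom-complex, i.e.\ $d\circ\Phi_n+\Phi_n\circ d=g_nf_n-\id$; your plan only computes $d\bigl(\Phi'_n(\underline{\alpha})\bigr)$ and asserts that its terms ``cancel in pairs except for those that assemble into $g_nf_n(\underline{\alpha})-\underline{\alpha}$.'' That assertion is false: a large class of merge-terms (for instance merging two rows inside row group $4$ or $6$, or across the boundaries of row groups $3$--$3$, $4$--$4$, $6$--$6$, $4$--$5$) cannot be cancelled internally or against $g_nf_n-\id$; they can only be matched with summands of $\Phi'_n(d\underline{\alpha})$, i.e.\ with terms of the form $B_{\bullet}(d_i\underline{\alpha})$. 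Omitting the $\Phi_n\circ d$ half makes the bookkeeping impossible to close. Your guesses for where the residual terms come from are also off: the copies of $g_nf'_n(\underline{\alpha})$ arise from the $t$-twisted wrap of the bottom row (row group $5$, in the case $d=p$, $q=2$) into row group $1$, matched with the column-choice data $\underline{c}$ of Defn.~\ref{defn-the-map-g}, while $-\underline{\alpha}$ arises from the case $r=0$, $d=1$, $p=m$, $q=n$; the $1$--$6$ wrap you single out cancels against another $\Phi'_n$-summand. Finally, the suggested ``induction on $q$'' has no evident inductive structure, since $q$ is merely an internal summation index; the verification is a single, if lengthy, case-by-case cancellation.
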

\begin{proof}
Let $f'_n$ be the $s = 1$ summand of 
$f_n$, so that $f_n = \frac{1}{n} \sum_{s=1}^n f'_n \circ t^{s-1}$. 
It suffices to show that we have an equality of maps 
$\hochcx_\bullet(\A^{\otimes n}; t) 
\rightarrow \hochcx_\bullet(\A^{\otimes n}; t)$:
$$ d\Phi'_n = g_n f'_n - \id. $$

For any $m \geq 1$ take any basic Hochschild chain 
$\alpha \in \hochcx_{m-1}(\A^{\otimes n})$ 
\begin{equation*}
\underline{\alpha}
:= 
\begin{pmatrix}
\alpha_{11} & \alpha_{12} & \dots & \alpha_{1n}  
\\
\alpha_{21} & \alpha_{22} & \dots & \alpha_{2n}  
\\
& & \dots & 
\\
\alpha_{m1} & \alpha_{m2} & \dots & \alpha_{mn}. 
\end{pmatrix} 
\end{equation*}

We need to show that 
\begin{equation}
\label{eqn-dpsi-equals-minus-psid+gf-id}
d\left(\Phi'_n\left(\underline{\alpha}\right)\right) = 
- \Phi'_n\left(d\underline{\alpha}\right) + g_nf'_n(\underline{\alpha}) - 
\underline{\alpha}. 
\end{equation}

We verify this by going through each summand in 
the sum \eqref{eqn-defn-of-psi'-n} for $\Phi'_n(\underline{\alpha})$ on the LHS
of \eqref{eqn-dpsi-equals-minus-psid+gf-id}
and differentiating it. Each summand of the result 
is a basic chain of $\hochcx_{m-1}(\A^{\otimes n})$ given 
by an $(m \times n)$-matrix involving $\alpha_{ij}$ 
with a coefficient $\pm 1$ coming from the the sign twists in 
\eqref{eqn-defn-of-psi'-n} and in the Hochschild differential. 
For each such matrix, we point
out that it occurs exactly one more time in the
sum \eqref{eqn-defn-of-psi'-n} for $d(\Phi'_n(\underline{\alpha}))$ 
on the LHS, in the sum \eqref{eqn-defn-of-psi'-n} for $\Phi'_n(d
\underline{\alpha})$ on the RHS, 
or in the term $g_nf'_n(\underline{\alpha}) - \underline{\alpha}$. 
We point out where it occurs and verify that the signs are such 
that the cancellation occurs. 

At the end of the proof, we will list the special summands of 
$d(\Phi'_n(\underline{\alpha}))$ on the LHS which cancel out 
those in the the term  $g_nf'_n(\underline{\alpha}) -
\underline{\alpha}$. For most of the proof, we will take 
the summands of $d(\Phi'_n(\underline{\alpha}))$ on the LHS, and 
identify the corresponding summands of 
$d(\Phi'_n(\underline{\alpha}))$ on the LHS or of 
$\Phi'_n(d \underline{\alpha})$ on the RHS which cancel them out.  
Our analysis makes it clear that all of the summands 
of $d(\Phi'_n(\underline{\alpha}))$ on the RHS 
are cancelled out. 

We first assume that $\tau = \upsilon = \id_{S_{m+1}}$. This is the
main substance of our analysis. We now consider the following cases: 

\fbox{$p, q, d, r$, and $\underline{c}$ generic:}

We start with generic values of $p, q, d, r$, and $\underline{c}$
in \eqref{eqn-defn-of-psi'-n} for which all six row
ranges listed in the table definining $B_{p, q, d, r
\underline{c}}(\underline{\alpha})$ in
Defn.~\ref{defn-cut-and-slice-matrix-operation-bpqdrc} are present and
non-empty. Explicitly, this means that $r \geq 2$, $d \geq r+1$, 
$d < p < m$, and $q < n$. 

We produce the cancelling summand for each summand of
$d B_{p, q, d, r \underline{c}}(\underline{\alpha})$. 
Denote by $d_i B_{p, q, d, r \underline{c}}(\underline{\alpha})$
the summand where $i$th and $(i+1)$st rows of
$B_{p, q, d, r \underline{c}}(\underline{\alpha})$ are merged.  
The way we produce the cancelling summand for 
$d_i B_{p, q, d, r \underline{c}}(\underline{\alpha})$
depends only on the row group $i$ and the row group of $i+1$ 
in the table of the row groups of 
$B_{p, q, d, r \underline{c}}(\underline{\alpha})$
given in Definition
\ref{defn-cut-and-slice-matrix-operation-bpqdrc}. 
It thus suffices to check the following cases, where by
case $(pq)$ we mean the case where $i$ lies in group $p$
and $i+1$ lies in group $q$:
\begin{itemize}
\item \underline{(12)}: This is the case $i = 1$. It has  
two subcases. If $c_1 = 1$, the 
the cancelling summand is  
$$B_{p-1, q, d-1, r-1,(c_2, \dots, c_r)}(d_1 \underline{\alpha}).$$
If $c_1 > 1$, then it is 
$$
d_{m+1} \tau B_{p, q, d, r, (c_2, \dots, c_r, c_1 - 1)}(\underline{\alpha})
\quad \text{ with } \quad \tau = \left((m+1)m\dots(r+1)\right).$$

\item \underline{(22)}: There are two subcases. If $c_i =
c_{i+1}$, the cancelling summand is 
$$B_{p-1, q, d-1, r-1,(c_1, \dots, c_{i}, c_{i+2}, \dots,
c_r)}(d_{\sigma_{\underline{c}}(i)} \underline{\alpha}).$$ 
If $c_i \neq c_{i+1}$, then it is 
$$d_{i} B_{p, q, d, r,(c_1, \dots, c_{i-1},
c_{i+1}, c_{i}, c_{i+2} \dots, c_r)} (\underline{\alpha}).$$ 

\item \underline{(23)}: This is the case $i = r+1$. 
The cancelling summand is 
$$ d_{r+1} \tau B_{p, q, d, r,\underline{c}} (\underline{\alpha})
\quad \text{ with } \quad \tau = \left((r+1)(r+2)\right). $$

\item \underline{(33)}: The cancelling summand is 
$$ B_{p-1, q, d-1, r, \underline{c}} (d_i \underline{\alpha}). $$

\item \underline{(34)}: This is the case $i=d$ The cancelling summand is 
$$ d_d \upsilon B_{p, q, d, r, \underline{c}} (\underline{\alpha})
\quad \text{ with } \quad \upsilon = \left(d(d+1)\right). $$

\item \underline{(44)}: The cancelling summand is 
$$ B_{p, q, d, r, \underline{c}} (d_{i + p - d} \underline{\alpha}).$$

\item \underline{(45)}: This is the case $i = m-p+d$. 
The cancelling summand is 
$$ B_{p, q, d, r, \underline{c}} (d_{m} \underline{\alpha}).$$

\item \underline{(56)}: This is the case $i = m-p+d+1$. 
The cancelling summand is 
$$ d_{m-p+d+1} \upsilon B_{p, q, d+1, r, \underline{c}} (\underline{\alpha})
\quad \text{ with } \quad \upsilon = \left((m-p+d+1)\dots(d+1)\right). $$

\item \underline{(66)}: The cancelling summand is 
$$ B_{p-1, q, d, r, \underline{c}} (d_{i-m+p-1} \underline{\alpha}).$$

\item \underline{(61)}: This is the case $i = m+1$. 
The cancelling summand is 
$$ d_1 \tau \upsilon B_{p-1, q, d, r, \underline{c}} (\underline{\alpha})
\quad \text{ with } \quad 
\tau = \left(2\dots(r+2)\right),
\upsilon = \left((r+2)\dots(d+1)\right). $$
\end{itemize}

\fbox{$p, q, d, r$, and $\underline{c}$ non generic:}

These are the cases when one or more of the six row
ranges listed in the table definining $B_{p, q, d, r
\underline{c}}(\underline{\alpha})$ in
Defn.~\ref{defn-cut-and-slice-matrix-operation-bpqdrc} are not
present. 

This adds to the above analysis the following additional cases 
where we need to describe the cancelling summand: 

\begin{itemize}
\item \underline{(13)}: This is the case $r = 0$ but $d \geq 2$. 
There are two subcases. If $q = n$, the the cancelling summand is  
$$B_{p-1, n, d-1, 0,\emptyset}(d_1 \underline{\alpha}),$$
where $\emptyset$ is the value of $\underline{c}$ when $r=0$. 
If $q < n$, the cancelling summand is
$$
d_{m+1} \tau B_{p, q, d, 1, (n-q)}(\underline{\alpha})
\quad \text{ with } \quad \tau = \left((m+1)m\dots2\right).$$

\item \underline{(14)}: This is the case $r = 0$, $d = 1$, $p < m$. The
cancelling summand is 
$$ B_{p+1, q, 1, 0, \emptyset}(d_{m+1}\underline{\alpha}). $$

\item \underline{(15)}: This is the case $r = 0$, $d = 1$, $p = m$ 
The are two cases. If $q = n$, then the cancelling summand is 
$\underline{\alpha}$ itself on the RHS of \eqref{eqn-defn-of-psi'-n}. 
If $q \neq n$, then the cancelling summand is
$$ B_{1, q+1, 1, 0, \emptyset}(d_{m+1}\underline{\alpha}). $$

\item \underline{(24)}: This is the case $d > 1$, $r = d-1$, $p < m$.
The cancelling summand is 
$$ d_d \tau B_{p, q, d, d-1, \underline{c}} (\underline{\alpha})
\quad \text{ with } \quad 
\tau = \left(d(d+1)\right).$$

\item \underline{(25)}: This is the case $d > 1$, $r = d-1$, $p = m$. 
The cancelling summand is 
$$ d_{d} \tau B_{m, q, d, d-1, \underline{c}} (\underline{\alpha})
\quad \text{ with } \quad 
\tau = \left(d(d+1)\right).$$

\item \underline{(35)}: This is the case $d > 1$, $r < d-1$, $p = m$.
The cancelling summand is:
$$ d_{d} B_{m,q,d-1,r,\underline{c}}(\underline{\alpha}). $$

\item \underline{(51)}: This is the case $d=p$. There are two
subcases. If $q = 2$, the cancelling summand is 
the summand of $g_n f'_n(\underline{\alpha})$ corresponding
to the summand of $g_n$ indexed by 
\begin{equation}
\label{eqn-indexing-c-for-the-summand-of-gf'}
\underline{c}' = (1,c_1, \dots, c_r,n-1, \dots, n-1, n, \dots, n) \in 
\left\{ 1, \dots, n\right\}^m,
\end{equation}
where there are $p-1-r$ and $m-p$ instances of $n-1$ and $n$,
respectively. If $q \neq 2$, the cancelling summand is
$$ d_{1} \tau B_{m,q-1,d,r,(c_1, \dots, c_r, n-q+1, \dots, n-q+1)}(\alpha)
\quad \text{ with } \quad \tau = (23\dots(d+1)).$$
\end{itemize}

Finally, let us assume that $\upsilon$ is non-trivial. 
Then 
$d \left(\upsilon B_{p,q,d,r,\underline{c}}(\underline{\alpha})\right)$ 
has three kinds of summands. First are those where the differential 
merges two rows belonging one each to the two row subsets which
$\upsilon$ shuffles. Then let $\upsilon'$ be the composition of
$\upsilon$ with the transposition interchanging these two rows. Then 
$d \left(\upsilon' B_{p,q,d,r,\underline{c}}(\underline{\alpha})\right)$ 
has an identical summand coming from merging the same two rows. All 
the sign twists in \eqref{eqn-defn-of-psi'-n} stay the same, 
except for $(-1)^{\upsilon'} = - (-1)^{\upsilon}$. We conclude that
the two summands cancel each other out. 

The second type of summands in 
$d \left(\upsilon B_{p,q,d,r,\underline{c}}(\underline{\alpha})\right)$
are those which do not belong to the first group, but
where the two rows being merged were also adjacent in 
$B_{p,q,d,r,\underline{c}}(\underline{\alpha})$. 
Thus there is a summand of 
$d(B_{p,q,d,r,\underline{c}}(\underline{\alpha}))$ 
where these two rows are merged.  
The analysis above give the cancelling counterpart $X$ for this summand. 
Let $\upsilon' \in S_m$ be the permutation obtained from $\upsilon$
where we treat the two merged rows as a single row. 
Then $\upsilon' X$ would cancel the desired summand of 
$d \left(\upsilon B_{p,q,d,r,\underline{c}}(\underline{\alpha})\right)$. 
It remains to show that $\upsilon'X$ exists as a summand in  
\eqref{eqn-dpsi-equals-minus-psid+gf-id}. We verify this
by analysing every applicable cancelling summand in the analysis
above for the case $\tau = \upsilon = \id$. Firstly, when 
$X = B_\bullet(d_\bullet \underline{\alpha})$, it is clear 
that $\upsilon' X$ is also a summand of $\Phi'_n(d \alpha)$. 
The remaining cases are:
\begin{itemize}
\item \underline{(12)}, the subcase $c_1 > 1$. Then 
$$ \upsilon' X = \upsilon' 
d_{m+1} \tau B_{p, q, d, r, (c_2, \dots, c_r, c_1 - 1)}(\underline{\alpha})
= 
d_{m+1} \tau \upsilon 
B_{p, q, d, r, (c_2, \dots, c_r, c_1 - 1)}(\underline{\alpha})
$$
where $\tau = \left((m+1)m\dots(r+1)\right)$.
\item \underline{(22)}, the subcase $c_i \neq c_{i+1}$. Then 
$$
\upsilon' X = \upsilon' 
d_{i} B_{p, q, d, r,(c_1, \dots, c_{i-1},
c_{i+1}, c_{i}, c_{i+2} \dots, c_r)} (\underline{\alpha})
= 
d_{i} \upsilon B_{p, q, d, r,(c_1, \dots, c_{i-1},
c_{i+1}, c_{i}, c_{i+2} \dots, c_r)} (\underline{\alpha}).$$ 
\item \underline{(23)}. Then 
$$ \upsilon' X = 
\upsilon' d_{r+1} \tau B_{p, q, d, r,\underline{c}}(\underline{\alpha})
= 
d_{r+1} \tau \upsilon B_{p, q, d,
r,\underline{c}}(\underline{\alpha}), $$
where $\tau = \left((r+1)(r+2)\right)$. 
\item \underline{(56)}. Then 
$$ \upsilon' X = 
\upsilon' d_{m-p+d+1} \upsilon'' B_{p, q, d+1, r, \underline{c}}
(\underline{\alpha})
= d_{m-p+d+1} \upsilon \upsilon'' B_{p, q, d+1, r, \underline{c}}
(\underline{\alpha}) $$
where $\upsilon'' = \left((m-p+d+1)\dots(d+1)\right).$
\item \underline{(61)}. Then
$$ \upsilon' X = 
\upsilon' d_1 \tau \upsilon'' B_{p-1, q, d, r, \underline{c}} (\underline{\alpha})
=
d_1 \tau (t_{m+1} \upsilon t_{m+1}^{-1} \upsilon'')  B_{p-1, q, d, r, \underline{c}} (\underline{\alpha})
$$
where $\tau = \left(2\dots(r+2)\right), 
\upsilon'' = \left((r+2)\dots(d+1)\right), t_{m+1} = (1\dots{m+1}).$
\item \underline{(13)}, the subcase $q < n$.
Then 
$$ \upsilon' X =
\upsilon'
d_{m+1} \tau B_{p, q, d, 1, (n-q)}(\underline{\alpha})
= d_{m+1} \upsilon \tau B_{p, q, d, 1, (n-q)}(\underline{\alpha}), $$
where $\tau = \left((m+1)m\dots2\right)$.

\item \underline{(51)}. In the subcase $q = 2$, recall that
$X$ is the summand of $g_n f'_n(\underline{\alpha})$ 
corresponding to the summand of $g_n$ indexed by $\underline{c}'$
as in \eqref{eqn-indexing-c-for-the-summand-of-gf'}. 
Then $\upsilon' X$ is the summand of 
$g_n f'_n(\underline{\alpha})$ corresponding to $\upsilon' \underline{c}'$. 

In the subcase $q \neq 2$, 
\begin{align*}
\upsilon' X & = 
\upsilon' d_{1} \tau B_{m,q-1,d,r,(c_1, \dots, c_r, n-q+1, \dots,
n-q+1)}(\underline{\alpha}) = \\
& = 
d_{1} \tau' B_{m,q-1,d,r,(c_1, \dots, c_r, n-q+1, \dots,
n-q+1)}(\underline{\alpha}),
\end{align*}
where $\tau = (23\dots(d+1))$ and $\tau' = t_{m+1} \upsilon
t_{m+1}^{-1} \tau$. 
\end{itemize}
 
The third and the last type of summands are those which do not belong
to the first group and where the two rows being merged were not adjacent in 
$B_{p,q,d,r,\underline{c}}(\underline{\alpha})$. This necessarily
means that one of the rows being merged is involved in the shuffle $\upsilon$, 
and the other isn't. There are three such cases, where by the case
$(ij)$ we mean the case where the rows being merged, before
they were permuted by $\upsilon$, belonged to the row
groups $i$ and $j$ of the table in 
Definition \ref{defn-cut-and-slice-matrix-operation-bpqdrc}:
\begin{itemize}
\item \underline{(24)}: This occurs when the shuffle $\upsilon$
permutes $d+1$-st row (the top row in group 4) just below the $r+1$-st
row (the bottom row of the group 2). The cancelling summand to
$d_{r+1} \left(\upsilon B_{p,q,d,r,\underline{c}}(\underline{\alpha})\right)$
is then 
$$ d_{r+1} \left(\tau \upsilon
B_{p,q,d,r,\underline{c}}(\underline{\alpha})\right) \quad \quad
\text{ with } \tau = ((r+1)(r+2)).$$
\item \underline{(35)}: This occurs when $\upsilon$ shuffles the
bottom row of the 3rd row group all the way down to the row $m-p+d+1$, 
just above the $5$th row group.  The cancelling summand is 
$$ \upsilon d_{d + m - p} B_{p,q,d-1,r,\underline{c}}(\underline{\alpha}). $$
where $\upsilon' \in S_m$ is obtained from $\upsilon \in S_{m+1}$ by
skipping $d$ in $\{1, \dots, m+1\}$ on the source side, and 
$\upsilon(d)$ on the target side.
\item \underline{(14)}: This occurs when the 2nd row group is empty, 
3rd row group is present, but $\upsilon$ shuffles the top row of the
$4$th group all the way up to the second row. 
The cancelling summand is 
$$ \upsilon' B_{p+1, q, d, r, \emptyset}(d_{m+1}\underline{\alpha}), $$
where $\upsilon' \in S_m$ is obtained from $\upsilon \in S_{m+1}$ by
skipping $d+1$ in $\{1, \dots, m+1\}$ on the source side, and 
$\upsilon(d+1)$ on the target side. 
\end{itemize}

Similar analysis reduces the case of arbitrary $\tau$ to the case 
$\tau = \id$.  
\end{proof}

\subsection{Homotopy $\Psi$}

We first need to introduce some integer-valued functions:
\begin{defn}
For any integers $k, l \geq 1$ define:
\begin{equation}
N_{k,l} := \sum_{c \in \left\{1,\dots, l\right\}^k} (-1)^{\sigma_c}.
\end{equation}
where the permutation $\sigma_c \in S_k$ is as in Definition 
\ref{defn-the-map-g}.
\end{defn}
In other words, $N_{k,l}$ is the number of the ways to choose a single 
entry in each row of a $k \times l$ matrix when counted with the
parity determined by the sign of the induced permutation of the rows.
The permutation is obtained by travelling down the columns 
from left to right, taking the order in which we encounter 
the chosen entries, and ordering their rows accordingly. 

\begin{lemma}
$$N_{k,l} = \sum_{i=1}^l \sum_{j=1}^{k} N_{k-j,i-1} Y_{j,k-j}.$$
\end{lemma}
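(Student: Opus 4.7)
The plan is to establish the identity combinatorially by partitioning the set of column-selections $c = (c_1,\dots,c_k) \in \{1,\dots,l\}^k$ according to two pieces of data: the largest column index $i = \max_r c_r$ that actually appears, and the subset $S = \{r : c_r = i\} \subseteq \{1,\dots,k\}$ of rows assigned to column $i$. Writing $j = |S| \geq 1$ and $T = \{1,\dots,k\} \setminus S$, the restricted selection $c|_T$ lies in $\{1,\dots,i-1\}^{k-j}$, and every reduced selection in $\{1,\dots,i-1\}^{k-j}$ arises this way. The overall strategy is to show that summing over the reduced selection and then over $S$ produces the factors $N_{k-j,i-1}$ and $Y_{j,k-j}$ respectively, yielding the claimed double sum.

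The key step, which I would verify first, is the sign factorisation
$$(-1)^{\sigma_c} = (-1)^{\tau_S}\,(-1)^{\sigma_{c|_T}},$$
where $\tau_S \in S_k$ is the \emph{gathering} permutation that sends the elements of $T$ in increasing order to $\{1,\dots,k-j\}$ and those of $S$ in increasing order to $\{k-j+1,\dots,k\}$, and $\sigma_{c|_T} \in S_{k-j}$ is the permutation associated to the reduced $(k-j)\times(i-1)$ selection. This is a direct consequence of the reading-order definition of $\sigma_c$: the entries in columns $1,\dots,i-1$ are read before any entry in column $i$, so the rows of $T$ fill the first $k-j$ output positions in exactly the order prescribed by $\sigma_{c|_T}$, while the rows of $S$, all lying in the single column $i$, occupy the last $j$ positions in their natural order.

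With the sign factorisation in hand, fix $i$ and $S$ and sum over reduced selections to get $(-1)^{\tau_S}\,N_{k-j,i-1}$. It remains to evaluate $\sum_{|S|=j}(-1)^{\tau_S}$. Observing that $\tau_S^{-1}$ sends $1,\dots,k-j$ to the sorted elements of $T$ and $k-j+1,\dots,k$ to the sorted elements of $S$, one recognises $\tau_S^{-1}$ as a $(k-j,j)$-shuffle, and this correspondence is a bijection as $S$ ranges over size-$j$ subsets of $\{1,\dots,k\}$. Since $(-1)^{\tau_S} = (-1)^{\tau_S^{-1}}$, formula \eqref{eqn-formula-for-Ypq} yields $\sum_{|S|=j}(-1)^{\tau_S} = Y_{k-j,j}$, which equals $Y_{j,k-j}$ by the symmetry of the explicit binomial formula defining $Y_{p,q}$. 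Assembling the pieces produces the claim, with the degenerate cases $j=k$ (forcing $T=\emptyset$) and $i=1$ (forcing $j=k$) handled by the conventions $N_{0,m}=1$ and $Y_{k,0}=1$. The only real obstacle is the sign factorisation above; once that is nailed down, everything that follows is routine bookkeeping.
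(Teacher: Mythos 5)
Your proof is correct and follows essentially the same route as the paper, which disposes of the identity in one sentence by observing that the $(i,j)$ summand counts, with sign, the selections having exactly $j$ chosen entries in column $i$ and none in columns $i+1,\dots,l$. Your sign factorisation $(-1)^{\sigma_c}=(-1)^{\tau_S}(-1)^{\sigma_{c|_T}}$ and the identification of the gathering permutations with $(k-j,j)$-shuffles (together with the symmetry $Y_{k-j,j}=Y_{j,k-j}$) simply make explicit the bookkeeping the paper leaves implicit.
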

\begin{proof}
The summands of the RHS each count the (signed) number of the choices of 
a single entry in each row of a $k \times l$ matrix where there are exactly $j$
chosen entries in the $i$-th column and no entries in columns $i+1,\ldots, l$.
\end{proof}

\begin{lemma}
\label{lemma-N(k+1)l-equals-Nkl-for-odd-k}
For odd $k$ we have $N_{k+1,l} = N_{k,l}$.  
\end{lemma}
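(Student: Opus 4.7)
The plan is to establish the much stronger closed form $N_{k,l} = l^{\lceil k/2 \rceil}$ by induction on $k$, from which the lemma is immediate: when $k = 2m+1$ is odd we have $\lceil k/2\rceil = m+1 = \lceil (k+1)/2\rceil$, so both $N_{k,l}$ and $N_{k+1,l}$ equal $l^{m+1}$. The base cases $N_{0,l}=1$ and $N_{1,l}=l$ are direct from the definition. For the inductive step I will apply the previous lemma, which expresses $N_{k,l}$ in terms of $N_{k',l'}$ with $k' < k$.

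The strategy for the inductive step relies on the key observation about the parity structure of $Y_{p,q}$: because $Y_{p,q}$ vanishes unless $p$ or $q$ is even, the terms in the recursion with $p+q = k$ that survive are precisely those where $j$ has the same parity as $k$ (when $k$ is even) and those with either parity (when $k$ is odd). In the even case $k=2m$, only $j = 2v$ contributes, with $Y_{2v,2m-2v} = \binom{m}{v}$. After re-indexing $w = m-v$ and substituting the inductive hypothesis $N_{2w,i-1} = (i-1)^w$, the inner sum becomes $\sum_{w=0}^{m-1}(i-1)^w\binom{m}{w} = i^m - (i-1)^m$ by the binomial theorem, and summing over $i$ gives $l^m$ by telescoping.

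For the odd case $k=2m+1$, both parities of $j$ contribute, with $Y_{j,k-j} = \binom{m}{\lfloor j/2\rfloor}$. Splitting the sum by parity, re-indexing each piece (odd $j=2v-1$ gives $N_{2(m-v+1),i-1}\binom{m}{v-1}$ and even $j=2v$ gives $N_{2(m-v)+1,i-1}\binom{m}{v}$), and using the inductive hypothesis for both $N_{2w,i-1} = (i-1)^w$ and $N_{2w+1,i-1} = (i-1)^{w+1}$, one obtains:
\begin{equation*}
N_{2m+1,l} = \sum_{i=1}^l \left[ \sum_{w=0}^m (i-1)^w \binom{m}{w} + (i-1)\sum_{w=0}^{m-1}(i-1)^w \binom{m}{w} \right].
\end{equation*}
The first bracket equals $i^m$ and the second equals $(i-1)[i^m - (i-1)^m]$, so the summand collapses to $i^{m+1} - (i-1)^{m+1}$, and summing over $i$ telescopes to $l^{m+1}$.

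I do not anticipate any real obstacle: the argument is a direct computation using Pascal's rule (implicit in the binomial expansion of $(1+(i-1))^m$) and telescoping. The only care needed is in the bookkeeping of the re-indexing between $j$, $v$, and $w$ in the odd case, and in checking that the treatment handles the boundary indices $w=0$ and $w=m$ correctly.
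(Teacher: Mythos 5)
Your proof is correct, but it takes a genuinely different route from the paper's. The paper proves the statement $N_{k+1,l}=N_{k,l}$ (for odd $k$) directly by strong induction, by taking the recursion $N_{2s+2,l}=\sum_{i}\sum_{j}N_{2s+2-j,i-1}Y_{j,2s+2-j}$, discarding the terms with both indices of $Y$ odd, and then splitting each surviving $Y_{2t,2s+2-2t}$ via the recursion $Y_{p,q}=Y_{p-1,q}+(-1)^pY_{p,q-1}$ so that the resulting expression is recognised, term by term, as the recursion defining $N_{2s+1,l}$; no closed form is ever produced. You instead prove the stronger statement $N_{k,l}=l^{\lceil k/2\rceil}$ by induction on $k$, using the explicit binomial values $Y_{2v,2m-2v}=\binom{m}{v}$ and $Y_{j,2m+1-j}=\binom{m}{\lfloor j/2\rfloor}$ together with the binomial theorem and telescoping over $i$; I checked the parity analysis, the re-indexings, and the collapses to $i^m-(i-1)^m$ and $i^{m+1}-(i-1)^{m+1}$, and they are all correct, so the lemma follows since $\lceil k/2\rceil=\lceil (k+1)/2\rceil$ for odd $k$. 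Your approach buys an explicit evaluation of $N_{k,l}$ (which also makes the lemma conceptually transparent), at the cost of slightly heavier bookkeeping; the paper's argument is shorter and purely structural, needing only the $Y$-recursion and the parity vanishing. One small point to make explicit in a write-up: the paper defines $N_{k,l}$ only for $k,l\geq 1$, so the values $N_{0,l}=1$ and $N_{k,0}=0$ ($k\geq 1$) that both you and the recursion of the preceding lemma rely on should be fixed as conventions (the empty tuple with sign $+1$, respectively the empty sum); with that convention your base cases and boundary terms are all consistent.
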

\begin{proof}
Let us prove this by double induction on $k,l$. 
For all $l$, we have $N_{1,l}=l$. To compute $N_{2,l}$ let us sum over the position $m$ of the entry in the first row:
$$
N_{2,l}=\sum\limits_{m=1}^l (l-m+1)-(m-1)=\sum\limits_{m=1}^l l+2-2m = l(l+2)-2\frac{l(l+1)}{2}=l.
$$
Suppose we have the statement for all $N_{i,j}$ for $i\leq k$, $j\leq l$, and $i+j<k+l$. 
Let $k=2s+1$. Then 
\begin{align*}
N_{2s+2, l}  = & \sum\limits_{i=1}^l \sum\limits_{j=1}^{2s+2} N_{2s+2-j,i-1}Y_{j,2s+2-j}\\
= & \sum\limits_{i=1}^l \sum\limits_{t=1}^{s} N_{2s+2-2t,i-1}Y_{2t,2s+2-2t}+1
\qquad\qquad (\text{since }Y_{a,b}=0 \text{ for odd } a, b)\\
= & \sum\limits_{i=1}^l \sum\limits_{t=1}^{s} N_{2s+2-2t, i-1} (Y_{2t-1,2s+2-2t} + Y_{2t, 2s+2-2t-1}) +1\\
= & \sum\limits_{i=1}^l \sum\limits_{t=1}^{s} N_{2s+1-(2t-1), i-1} Y_{2t-1,2s+2-2t}  + \sum\limits_{i=1}^l \sum\limits_{t=1}^s N_{2s+1-2t, i-1} Y_{2t, 2s+2-2t-1}+1\\
= &N_{2s+1, l}.
\end{align*}
\end{proof}

\begin{defn}
Let $n,m,p,q \in \mathbb{Z}$ with $n, m, p \geq 1$, $q \geq 0$, $p+q \leq m$. 

We define
$$ T^{p,q}_{m,n} := \frac{1}{n} (-1)^{(p-1)(m-p-q)} Y_{p-1,q}
\sum_{s=1}^n N_{p-1,s-1} N_{q,n-s}. $$
\end{defn}

\begin{lemma}
\label{lemma-technical-results-on-Tpqmn}
Let $n,m,p,q \in \mathbb{Z}$ with $n, m, p \geq 1$, $q \geq 0$, $p+q \leq m$. 
Then:
\begin{enumerate}
\item 
\label{item-sign-alternating-formula-for-Tpq(m-1)n}
$T^{p,q}_{m,n} = (-1)^{p-1} T^{p,q}_{m-1,n}$.  
\item 
\label{item-Tpqmn-vanishes-for-even-p-and-odd-q}
For even $p$ and odd $q$, we have $T^{p,q}_{m,n} = 0$.  
\item 
\label{item-Tpqmn-via-T(p-1)qmn-and-Tp(q-1)mn-for-odd-p-and-even-q}
For odd $p > 1$ and even $q > 0$, we have 
\begin{equation}
T^{p,q}_{m,n} = (-1)^{m} T^{p-1,q}_{m,n} + T^{p,q-1}_{m,n}
\end{equation}
\end{enumerate}
\end{lemma}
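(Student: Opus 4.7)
The plan is to prove parts (1) and (2) essentially by inspection of the definition, and then reduce part (3) to the recursion \eqref{eqn-recursive-formula-for-Ypq} for $Y_{p,q}$ together with Lemma \ref{lemma-N(k+1)l-equals-Nkl-for-odd-k}.

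For part (1), I observe that in the definition of $T^{p,q}_{m,n}$ the only factor depending on $m$ is the sign $(-1)^{(p-1)(m-p-q)}$. Passing from $m$ to $m-1$ changes the exponent $(p-1)(m-p-q)$ by $p-1$, which contributes a factor of $(-1)^{p-1}$. For part (2), when $p$ is even and $q$ is odd, both $p-1$ and $q$ are odd, so by the very definition of $Y_{p-1,q}$ we have $Y_{p-1,q}=0$ and hence $T^{p,q}_{m,n}=0$.

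For part (3), assume $p$ is odd and $q$ is even, so $p-1$ is even, $p-2$ is odd, and $q-1$ is odd. First I verify that the sign prefactors agree on both sides. On the LHS the sign is $(-1)^{(p-1)(m-p-q)}=1$ since $p-1$ is even; in the second summand of the RHS the sign is $(-1)^{(p-1)(m-p-q+1)}=1$ for the same reason; in the first summand it is $(-1)^m(-1)^{(p-2)(m-p+1-q)}=(-1)^m(-1)^{m-p-q+1}=(-1)^{p+q+1}=1$ since $p$ is odd and $q$ is even. So the claim reduces to
\begin{equation*}
Y_{p-1,q}\sum_{s=1}^n N_{p-1,s-1}N_{q,n-s}
=Y_{p-2,q}\sum_{s=1}^n N_{p-2,s-1}N_{q,n-s}
+Y_{p-1,q-1}\sum_{s=1}^n N_{p-1,s-1}N_{q-1,n-s}.
\end{equation*}
Since $p-2$ is odd, Lemma \ref{lemma-N(k+1)l-equals-Nkl-for-odd-k} gives $N_{p-1,s-1}=N_{p-2,s-1}$; since $q-1$ is odd, the same lemma gives $N_{q,n-s}=N_{q-1,n-s}$. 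Hence every $\sum_s$ above equals the common sum $\Sigma:=\sum_{s=1}^n N_{p-2,s-1}N_{q-1,n-s}$, and the identity reduces to the purely numerical statement
\begin{equation*}
Y_{p-1,q}=Y_{p-2,q}+Y_{p-1,q-1},
\end{equation*}
which is exactly the recursion \eqref{eqn-recursive-formula-for-Ypq} applied with $a=p-1$ even, so that $(-1)^a=1$.

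The only mildly delicate step is the sign bookkeeping in part (3); everything else is algebraic manipulation. Once the parities of $p-1$, $p-2$, $q$, $q-1$ are tracked carefully, the two applications of Lemma \ref{lemma-N(k+1)l-equals-Nkl-for-odd-k} let one factor out the common sum $\Sigma$ and reduce to \eqref{eqn-recursive-formula-for-Ypq}. No further combinatorial input about $N_{k,l}$ is needed.
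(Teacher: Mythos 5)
Your proof is correct and follows essentially the same route as the paper: parts (1) and (2) are read off directly from the definition of $T^{p,q}_{m,n}$ and the vanishing of $Y_{p-1,q}$ when both indices are odd, and part (3) is reduced, after checking that all sign prefactors are $+1$, to the recursion \eqref{eqn-recursive-formula-for-Ypq} for $Y$ together with Lemma \ref{lemma-N(k+1)l-equals-Nkl-for-odd-k} applied to $N_{p-1,\cdot}=N_{p-2,\cdot}$ and $N_{q,\cdot}=N_{q-1,\cdot}$. Your write-up just supplies the parity bookkeeping that the paper's (much terser) proof leaves implicit.
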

\begin{proof}
For \eqref{item-sign-alternating-formula-for-Tpq(m-1)n}, note that 
the only thing that depends on $m$ in the definition of
$T^{p,q}_{m,n}$ is the sign twist $(-1)^{(p-1)(m-p-q)}$. For 
\eqref{item-Tpqmn-vanishes-for-even-p-and-odd-q}, note that by
definition $Y_{p-1,q} = 0$ for even $p$ and odd $q$. Finally, 
\eqref{item-Tpqmn-via-T(p-1)qmn-and-Tp(q-1)mn-for-odd-p-and-even-q}
follows from Lemma \ref{lemma-N(k+1)l-equals-Nkl-for-odd-k}
and the formula \eqref{eqn-recursive-formula-for-Ypq}. 
\end{proof}

\begin{prps}
Let $m,n \in \mathbb{Z}$ with $m, n \geq 1$. 
Let 
$$ 
\underline{\alpha} : = 
\left(
\begin{matrix}
\alpha_1 \\
\alpha_2 \\
\dots  \\
\alpha_m 
\end{matrix}
\right)
\in \hochcx_{m-1}(\A)
$$
be any Hochschild chain of length $m$. 

Let 
\begin{equation}
\begin{tikzcd}
\hochcx_\bullet(\A^{\otimes n};t)_t
\ar[shift left = 1ex]{r}{f_n}
&
\hochcx_\bullet(\A).
\ar[shift left = 1ex]{l}{g_n}
\end{tikzcd}
\end{equation}
be the maps defined in Definitions
\ref{defn-the-map-g} and \ref{defn-the-map-f}, respectively. 
Then 
\begin{small}
\begin{equation}
\label{eqn-the-summands-of-f_ng_n}
 f_n g_n (\underline{\alpha}) 
=
\sum_{
\begin{smallmatrix}
1 \leq p \leq m
\\
0 \leq q \leq m-p
\end{smallmatrix}
}
T^{p,q}_{m,n}
\shfl\left(
\left(
\begin{matrix}
\alpha_{m-q+1} \dots \alpha_m \alpha_1 \dots \alpha_p \\
\alpha_{p+1} \\
\dots \\
\dots \\
\dots \\
\alpha_{m-q} 
\end{matrix}
\right), 
\left(
\begin{matrix}
1 \\
\dots \\
p+q \\ 
\text{ in total } \\
\dots \\
1
\end{matrix}
\right)
\right).
\end{equation}
\end{small}
\end{prps}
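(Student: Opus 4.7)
We compute $f_n g_n(\underline{\alpha})$ by unwinding both definitions and then regrouping. Substituting first $g_n$ and then $f_n$ yields
\[
f_n g_n(\underline{\alpha}) \;=\; \frac{1}{n} \sum_{s=1}^n \sum_{c} (-1)^{\sigma_c}\, f_n^{(s)}(M_c),
\]
where $c = (1, c_2, \dots, c_m) \in \{1, \dots, n\}^m$ ranges over choice vectors with $c_1 = 1$, $M_c$ is the matrix with $\alpha_{\sigma_c(i)}$ at position $(i, c_i)$ (and identities elsewhere), and $f_n^{(s)}$ is the $s$-th summand of $f_n$. For each $(c, s)$, the output $f_n^{(s)}(M_c)$ is a basic Hochschild chain in $\hochcx_{m-1}(\A)$ with a composed top entry and the $s$-th column of $M_c$ (from row $2$ down) at the bottom.

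The structural observation is that $\sigma_c$, being the lexicographic sorting of pairs $(c_i, i)$, sends the indices $\{i : c_i = k\}$ to the consecutive block $\{N_{k-1}+1, \dots, N_k\}$ (with $N_k := |\{i : c_i \leq k\}|$) in the natural $i$-order. It follows that the non-identity bottom entries of $f_n^{(s)}(M_c)$ are precisely $\alpha_{p+1}, \dots, \alpha_{m-q}$ placed in the increasing rows with $c_i = s$, where $(p, q) = (N_{s-1}, m - N_s)$ for $s \geq 2$ and $(1, m - N_1)$ for $s = 1$ (in this last case $\beta_{11} = \alpha_1$ is carried to the end of the top composition by the formula for $f_n^{(s)}$). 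Traversing $M_c$ in the cyclic column order $s+1, s+2, \dots, s-1, s$ identifies the top entry as the composition $\alpha_{m-q+1} \alpha_{m-q+2} \cdots \alpha_m \alpha_1 \cdots \alpha_p$. Thus each summand on the LHS already has the shape of a basic summand of a shuffle appearing on the RHS, and we regroup the summands by $(p, q)$ and by $\pi := \{i \geq 2 : c_i = s\}$.

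It remains to compute the scalar coefficient of each $(p, q, \pi)$-summand and identify it with $T^{p,q}_{m,n}$ times the associated shuffle sign. For fixed $(s, \pi)$ the remaining freedom in $c$ factors through the partition $F := \{2, \dots, m\} \setminus \pi = A \sqcup B$ where $A := \{i \in F : c_i < s\}$ and $B := \{i \in F : c_i > s\}$ (of sizes $p-1$ and $q$), together with the choice of $c_i$ in $\{1, \dots, s-1\}$ on $A$ and in $\{s+1, \dots, n\}$ on $B$. Using $(-1)^{\sigma_c} = (-1)^{\mathrm{inv}(c)}$, the sign factorises into inversions internal to $A$, internal to $B$, and type-inversions between $A$, $\pi$, $B$ in $\{2, \dots, m\}$. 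Summing the internal parts over $c|_A$ and $c|_B$ yields $N_{p-1, s-1}$ and $N_{q, n-s}$; a direct inversion count shows the type-inversion sign further splits, modulo $2$, as $(p-1)(m-p-q)$ plus the $A$-vs-$B$ shuffle-sign exponent in $F$ plus an exponent depending only on $\pi$ that matches the shuffle-sign on the RHS. Summing the $A$-vs-$B$ sign over subsets $A \subseteq F$ of size $p-1$ produces $Y_{p-1, q}$ by the very definition of $Y$, and summing over $s$ together with the $\tfrac{1}{n}$ prefactor produces exactly $T^{p,q}_{m,n}$ times the shuffle sign, as desired.

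The main obstacle is the sign analysis in this last step: one has to factorise $(-1)^{\sigma_c}$ into four independent pieces and verify that the $\pi$-dependent piece agrees with the Hochschild shuffle sign on the RHS. The remaining combinatorics (the identification of the internal sums with $N_{p-1,s-1}$, $N_{q,n-s}$, and of the $A$-vs-$B$ sum with $Y_{p-1, q}$) are routine manipulations of shuffle and inversion counts.
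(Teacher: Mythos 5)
Your proposal is correct and follows essentially the same route as the paper's proof: unwind $g_n$ and then $f_n$, observe that each summand indexed by a choice vector $c$ and a column $s$ is already a summand of one of the shuffle products on the right-hand side, with top entry $\alpha_{m-q+1}\cdots\alpha_m\alpha_1\cdots\alpha_p$ where $p$ and $q$ count the choices in columns before and after $s$, and then match coefficients, arriving in both cases at $\tfrac{1}{n}\sum_{s}N_{p-1,s-1}N_{q,n-s}$ times $Y_{p-1,q}$ together with the sign $(-1)^{(p-1)(m-p-q)}$ absorbed into $T^{p,q}_{m,n}$. The only real difference is organisational: you factorise the inversion count of $\sigma_c$ globally into internal-$A$, internal-$B$, $A$-versus-$B$ and a $\pi$-dependent piece (your asserted splitting does check out: the cross-type inversions are $\#\{i\in\pi,j\in A, i<j\}+\#\{i\in B,j\in\pi,i<j\}+\#\{i\in B,j\in A,i<j\}$, and rewriting the first term as $|A||\pi|$ minus its complement gives exactly $(p-1)(m-p-q)$ plus the identity-before-$\alpha$ count, i.e. the RHS shuffle exponent, modulo $2$), whereas the paper first acts by order-preserving permutations of $c_2,\dots,c_m$, which change both sides by the same sign, to reduce to a single block-form summand per $(p,q)$, then extracts $Y_{p-1,q}$ from the free $S_{p-1,q}$-orbit, so that for the remaining contiguous-block choices $\sigma_c$ is a product of four independent permutations and the count $N_{p-1,s-1}N_{q,n-s}$ is immediate, with no global sign bookkeeping needed.
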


\begin{proof}
By definition, the summands of $g_n(\alpha)$ are enumerated by $c \in
\left\{1,\dots,n\right\}^m$ with $c_1 = 1$. We think of $c$ as a
choice of one entry in each row of an $m \times n$ matrix. The
corresponding summand is this matrix with $\alpha_1, \dots, \alpha_n$ 
placed in the chosen entries in the cyclic order 
\eqref{eqn-m-times-n-hochschild-chain-with-the-composable-order-indicated}. 
All the other entries are filled with identity maps. 
The sign of this summand is the sign of 
the permutation $\sigma_c$ defined by setting $\sigma_c(i)$ to be 
the index of $\alpha_\bullet$ which was placed in row $i$,
cf.~\S\ref{section-map-g}. 

Applying $f_n$ to such matrix yields a sum indexed by its columns. 
For each column, we take 
the other columns of the matrix, compose them in the cyclic order 
\eqref{eqn-m-times-n-hochschild-chain-with-the-composable-order-indicated}
and pre-compose them with the top entry of the column. There is no
sign change, thus all $m$ summands have the sign $(-1)^{\sigma_c}$,
but gain a coefficient $\frac{1}{n}$. 

It follows that the summands of $f_n g_n(\alpha)$ on the LHS 
are precisely those in the sum on the RHS. It remains to show 
that the coefficients match. 

Fix any $1 \leq p \leq m$ and $0 \leq q \leq m-p$. Let $X$ be
any summand in the corresponding shuffle product on the RHS.  
Take any choice of $c \in \left\{1,\dots,n\right\}^m$ 
and of a column $1 \leq s \leq m$ which contribute to $X$ 
on the LHS. Permuting the elements $c_2, \dots, c_m$ by any permutation 
$\tau \in S_{m-1}$ which preserves the relative order of $c_i = s$ 
and the relative order of $c_i \neq s$ produces a new 
choice $c'$ of entries in the rows with $\sigma_c' = \tau \sigma_c$. 
On the LHS the summand $c'$ and $s$ contribute to is $\tau(X)$
and the sign of the contribution differs by $(-1)^\tau$ 
from that of $c$ and $s$. Hence on the LHS the coefficitients 
of $X$ and $\sigma(X)$ differ by $(-1)^\tau$. On the other hand, 
the same holds on the RHS by the definition of the shuffle product. 

It is therefore enough to show that the coefficient of any one summand 
in the shuffle product on the RHS matches its coefficient on the LHS.  

Let $X$ be the summand in the shuffle product which has $p-1$
identity maps above $\alpha_{p+1}, \dots, \alpha_{m-q}$ and has the
remaining $q$ identity maps below them. Take any 
$c \in \left\{1,\dots,n\right\}^m$ and $1 \leq s \leq m$ 
which contribute to $X$ on the LHS. This means
that $c_{p+1} = \dots = c_{m-q} = s$ and there are $p$ elements $c_i$ 
with $c_i < s$ and $q$ with $c_i > s$. Now let $S_{p-1+q} < S_{m-1}$ be
the subgroup which permutes first $p-1$ and the last $q$ entries and
let $\tau \in S_{p-1,q} \subset S_{p-1+q}$ be any shuffle. Then, 
similar to above, applying $\tau$ to $c_2, \dots, c_m$ produces 
$c'$ such that $\sigma' = \tau \sigma_c$. Since the swapped rows 
of the matrix all contain $\id$ in the $s$-th column, 
on the LHS $c'$ and $s$ contribute to the same summand $X$ 
but with the sign differing by $(-1)^\tau$. 

The action of $S_{p-1,q}$ on $c$ has no stabiliser and
its orbit contains exactly one element $c'$ where 
$c'_1, \dots, c'_p < s$ and 
$c'_{m-q+1}, \dots, c'_{q} > s$. It follows that to count 
the contribution to $X$ on the LHS from all 
$c \in \left\{1,\dots,n\right\}^m$ and fixed $s$, we can count 
contributions only from $c$ with $c_1, \dots, c_p < s$ 
and $c_{m-q+1}, \dots, c_{q} > s$ and multiply the result by 
$Y_{p-1, q} = \sum_{\tau \in S_{p-1,q}} (-1)^\tau$. 

It remains to count the contributions to $X$ from $c$ and $s$
with 
$$
\begin{array}{l l}
\bullet\; c_1 = 1, \quad
& \bullet\; 1 \leq c_2, \dots, c_p \leq s-1, 
\\
\bullet\; c_{p+1} = \dots = c_{m-q} = s, \quad
& \bullet\; s+1 \leq c_{m-q+1}, \dots, c_{q} \leq n. 
\end{array}
$$
In terms of choosing the entries in the rows of a $m \times n$ matrix, 
this means that our chosen entries must lie in four submatrices of 
size $1 \times 1$, $(p-1) \times (s-1)$, $(m-p-q) \times 1$ 
and $q \times (n-s)$. We are free to choose any placement of entries within
each submatrix. The cyclic order 
\eqref{eqn-m-times-n-hochschild-chain-with-the-composable-order-indicated}
traverses the four submatrices sequentially: first all the entries of
the first submatrix, then all the entries of the second, etc. 
This implies that $\sigma_c$ is the product 
of the four permutations corresponding to each submatrix. 
Thus all possible $c$ and $s$, counted with the sign $(-1)^{\sigma_c}$
contribute 
$$ \frac{1}{n} \sum_{s = 1}^{n} N_{1,1} N_{p-1,s-1} N_{s,1} N_{q,n-s} = 
\frac{1}{n} \sum_{s = 1}^{n} N_{p-1,s-1} N_{q,n-s}. $$

Multiplying by $Y_{p-1,q}$ we obtain the total coefficient of $X$ 
on the LHS. By inspection, this matches the coefficient $T^{p,q}_{m,n}
(-1)^{(p-1)(m-p-q)}$ of $X$ on the RHS. Here $(-1)^{(p-1)(m-p-q)}$
is the sign with which $X$ appears in the shuffle product. 
\end{proof}

This leads us to define the desired homotopy $\Psi$ as follows:

\begin{defn}
For each $n \geq 1$ define a degree $-1$ map 
$$ \Psi_n\colon \hochcx_\bullet(\A) \rightarrow \hochcx_\bullet(\A) $$
to be the map which sends any chain $$ 
\underline{\alpha} : = 
\left(
\begin{matrix}
\alpha_1 \\
\alpha_2 \\
\dots  \\
\alpha_m 
\end{matrix}
\right)
\in \hochcx_{m-1}(\A)
$$
to 
\begin{small}
\begin{equation}
\label{eqn-the-definition-of-the-map-psi}
(-1)^m
\sum_{
\begin{smallmatrix}
1 \leq p \leq m 
\\
0 \leq q \leq m-p 
\\
p + q \text{ even }
\end{smallmatrix}
}
T^{p,q}_{m,n}
\shfl\left(
\left(
\begin{matrix}
\alpha_{m-q+1} \dots \alpha_m \alpha_1 \dots \alpha_p \\
\alpha_{p+1} \\
\dots \\
\dots \\
\dots \\
\alpha_{m-q} 
\end{matrix}
\right), 
\left(
\begin{matrix}
1 \\
\dots \\
p+q+1 \\
\text{ in total } \\
\dots \\
1
\end{matrix}
\right)
\right).
\end{equation}
\end{small}
\end{defn}

\begin{theorem}
\label{theorem-psi-is-the-homotopy-of-fg-minus-id}
We have  
$$ d\Psi_n = f_n g_n - \id. $$
\end{theorem}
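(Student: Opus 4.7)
The plan is to follow the template of the proof of Theorem \ref{theorem-phi-is-the-homotopy-of-gf-minus-id}: expand $d\Psi_n(\underline{\alpha})$ summand by summand using the Leibniz rule for the shuffle product, then match each resulting chain with a summand of the formula for $f_n g_n(\underline{\alpha})$ given in the preceding proposition, using the coefficient identities of Lemma \ref{lemma-technical-results-on-Tpqmn}.

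Since the Eilenberg-Zilber map is a chain map, for each summand of \eqref{eqn-the-definition-of-the-map-psi} the Leibniz rule yields
\begin{equation*}
d\,\shfl(\underline{\beta}_{p,q}, 1^{\otimes(p+q+1)}) = \shfl(d\underline{\beta}_{p,q}, 1^{\otimes(p+q+1)}) + (-1)^{m-p-q}\,\shfl(\underline{\beta}_{p,q}, d\,1^{\otimes(p+q+1)}),
\end{equation*}
where I write $\underline{\beta}_{p,q}$ for the base chain appearing in the $(p,q)$-summand of $\Psi_n$. A direct computation in $\hochcx_\bullet(\kk)$ gives $d\,1^{\otimes(k+1)} = 1^{\otimes k}$ for $k$ even and $0$ for $k$ odd, which explains the even-$p+q$ restriction in \eqref{eqn-the-definition-of-the-map-psi}: the second Leibniz term then reproduces the $(p,q)$-summand of $f_n g_n(\underline{\alpha})$ with matching coefficient $T^{p,q}_{m,n}$. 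For the first Leibniz term I would decompose the Hochschild boundary $d\underline{\beta}_{p,q}$ into the head composition (producing $\underline{\beta}_{p+1,q}$), the cyclic tail composition (producing $\underline{\beta}_{p,q+1}$ with sign $(-1)^{m-p-q}$), and the interior compositions. Collecting head and tail contributions that land at a shape $(p',q')$ with $p'+q'$ odd reproduces the coefficient $T^{p',q'}_{m,n}$ via part (3) of Lemma \ref{lemma-technical-results-on-Tpqmn}; the boundary cases $p'=1$, $q'=0$, or $p'+q'=m$ are handled by parts (1) and (2) of the same lemma together with the recursion \eqref{eqn-recursive-formula-for-Ypq} and Lemma \ref{lemma-N(k+1)l-equals-Nkl-for-odd-k}.

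Two ingredients remain to address. First, the $(p',q')=(1,0)$ summand of $f_n g_n(\underline{\alpha})$ has $T^{1,0}_{m,n}=1$ and base chain equal to $\underline{\alpha}$, so this summand is exactly $\underline{\alpha}$; no contribution to this shape arises in $d\Psi_n(\underline{\alpha})$ because the only potential sources would come from $(p,q)=(1,0)$, $(0,0)$, or $(1,-1)$, all outside the summation range of \eqref{eqn-the-definition-of-the-map-psi}, so this shape is precisely what is cancelled by $-\id$. Second, the interior compositions in $d\underline{\beta}_{p,q}$ shuffled with $1^{\otimes(p+q+1)}$ produce chains whose bimodule slot is not of the form $\underline{\beta}_{p',q'}$; the main obstacle of the proof, as in Theorem \ref{theorem-phi-is-the-homotopy-of-gf-minus-id}, is to exhibit an explicit sign-reversing pairing of these chains across neighbouring $(p,q)$ summands to make them cancel in pairs, with the even-parity restriction of the summation in $\Psi_n$ being precisely what is needed for the cancellation signs to match.
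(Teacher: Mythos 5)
Your overall template matches the paper's: the Leibniz rule for the shuffle product, the parity computation $d\,1^{\otimes(k+1)}=1^{\otimes k}$ for $k$ even and $0$ for $k$ odd (which is indeed why only even $p+q$ appear in \eqref{eqn-the-definition-of-the-map-psi}), the head/interior/tail decomposition of the boundary of the base chain, and the observation that the $(p,q)=(1,0)$ shape of \eqref{eqn-the-summands-of-f_ng_n} is exactly what $-\id$ cancels. The second Leibniz term and the head/tail matching via Lemma \ref{lemma-technical-results-on-Tpqmn}(3) for odd $p$, even $q$ are also as in the paper.

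The gap is in how you dispose of the interior-composition terms, and more generally in the fact that you never compute $\Psi_n(d\underline{\alpha})$. The statement $d\Psi_n=f_ng_n-\id$ concerns the differential of $\Psi_n$ as a degree $-1$ map, i.e.\ $d\circ\Psi_n+\Psi_n\circ d$ (exactly as in the proof of Theorem \ref{theorem-phi-is-the-homotopy-of-gf-minus-id}, where the identity verified is $d(\Phi'_n(\underline{\alpha}))=-\Phi'_n(d\underline{\alpha})+g_nf'_n(\underline{\alpha})-\underline{\alpha}$), while your expansion only treats $d(\Psi_n(\underline{\alpha}))$. This matters because the proposed ``sign-reversing pairing across neighbouring $(p,q)$ summands'' cannot exist: a chain $\shfl(\underline{\beta}^{(j)}_{p,q},1^{\otimes(p+q+1)})$ obtained from an interior face of the $(p,q)$-summand carries two composed blocks, the head $\alpha_{m-q+1}\cdots\alpha_m\alpha_1\cdots\alpha_p$, which records $(p,q)$, and the pair $\alpha_{p+j}\alpha_{p+j+1}$, which records $j$; hence chains from distinct $(p,q,j)$ are pairwise distinct basic chains, and none of them occurs in $f_ng_n(\underline{\alpha})-\underline{\alpha}$, whose summands have only the head block composed. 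In the paper these terms are cancelled by the matching interior summands of $\Psi_n(d\underline{\alpha})$: the coefficients are $(-1)^{m+j}T^{p,q}_{m,n}$ against $(-1)^{m+p+j}T^{p,q}_{m-1,n}$, and part (1) of Lemma \ref{lemma-technical-results-on-Tpqmn}, $T^{p,q}_{m,n}=(-1)^{p-1}T^{p,q}_{m-1,n}$, is precisely what makes them cancel; note the subscript $m-1$ can only arise from applying $\Psi_n$ to the length-$(m-1)$ faces of $\underline{\alpha}$, so part (1) has no role in an argument confined to $d(\Psi_n(\underline{\alpha}))$. The same omission affects the shapes with $p$ even and $q$ odd: there $d(\Psi_n(\underline{\alpha}))$ contributes $(-1)^mT^{p-1,q}_{m,n}+T^{p,q-1}_{m,n}$, which is in general nonzero, whereas the target coefficient is $T^{p,q}_{m,n}=0$ by part (2); part (3) does not apply to this parity, and the vanishing again comes from cancellation against the cyclic and interior-face contributions to $\Psi_n(d\underline{\alpha})$ combined with part (1). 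So the missing ingredient is not a cleverer pairing within $d(\Psi_n(\underline{\alpha}))$ but the explicit coefficient bookkeeping for $\Psi_n(d\underline{\alpha})$ and its addition to that of $d(\Psi_n(\underline{\alpha}))$, which is exactly how the paper concludes.
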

\begin{proof}
For any $m \geq 1$ take any Hochschild chain of length $m$.
$$ 
\underline{\alpha} : = 
\left(
\begin{matrix}
\alpha_1 \\
\alpha_2 \\
\dots  \\
\alpha_m 
\end{matrix}
\right)
\in \hochcx_{m-1}(\A)
$$
The shuffle product satisfies Leibnitz rule. Moreover, 
for any $1 \leq p \leq m$ and $0 \leq q \leq m-p$ with $p+q$ even, we have
\begin{small}
\begin{align*}
d 
&\left(
\begin{matrix}
\alpha_{m-q+1} \dots \alpha_m \alpha_1 \dots \alpha_p \\
\alpha_{p+1} \\
\dots \\
\dots \\
\dots \\
\alpha_{m-q} 
\end{matrix}
\right)
= 
\left(
\begin{matrix}
\alpha_{m-q+1} \dots \alpha_m \alpha_1 \dots \alpha_{p+1} \\
\alpha_{p+2}\\
\dots \\
\dots \\
\dots \\
\alpha_{m-q} 
\end{matrix}
\right)
+ 
\\
+
\sum_{j = 1}^{m-p-q-1}
(-1)^{j}
&\left(
\begin{matrix}
\alpha_{m-q+1} \dots \alpha_m \alpha_1 \dots \alpha_p \\
\alpha_{p+1} \\
\dots \\
\alpha_{p+j}\alpha_{p+j+1} \\
\dots \\
\alpha_{m-q} 
\end{matrix}
\right)
+
(-1)^{m}
\left(
\begin{matrix}
\alpha_{m-q} \dots \alpha_m \alpha_1 \dots \alpha_p \\
\alpha_{p+1} \\
\dots \\
\dots \\
\dots \\
\alpha_{m-q-1} 
\end{matrix}
\right)
\end{align*}
\end{small}
and
\begin{small}
\begin{equation*}
d\left(
\begin{matrix}
1 \\
\dots \\
p+q+1 \\
\text{ in total } \\
\dots \\
1
\end{matrix}
\right)
= 
\left(
\begin{matrix}
1 \\
\dots \\
p+q \\
\text{ in total } \\
\dots \\
1
\end{matrix}
\right)
\end{equation*}
\end{small}

It follows that that $d(\Psi_n(\underline{\alpha}))$ is the sum of
the following summands:
\begin{itemize}
\item For all $1 \leq p \leq m$ and $0 \leq q \leq m-p$ 
$$ \shfl\left(
\left(
\begin{matrix}
\alpha_{m-q+1} \dots \alpha_m \alpha_1 \dots \alpha_p \\
\alpha_{p+1} \\
\dots \\
\dots \\
\dots \\
\alpha_{m-q} 
\end{matrix}
\right), 
\left(
\begin{matrix}
1 \\
\dots \\
p+q \\
\text{ in total } \\
\dots \\
1
\end{matrix}
\right)
\right) $$
with the coefficient
$$
\begin{cases}
T^{p,q}_{m,n} & p + q \text{ even } \\
(-1)^m T^{p-1,q}_{m,n} + T^{p,q-1}_{m,n} & p + q \text{ odd }. 
\end{cases}
$$
\item For all $1 \leq p \leq m$, $0 \leq q \leq m-p$ and $1 \leq j
\leq m-p-q-1$
$$ \shfl\left(
\left(
\begin{matrix}
\alpha_{m-q+1} \dots \alpha_m \alpha_1 \dots \alpha_p \\
\alpha_{p+1} \\
\dots \\
\alpha_{p+j}\alpha_{p+j+1} \\
\dots \\
\alpha_{m-q} 
\end{matrix}
\right), 
\left(
\begin{matrix}
1 \\
\dots \\
p+q+1 \\
\text{ in total } \\
\dots \\
1
\end{matrix}
\right)
\right) $$
with the coefficient
$$
\begin{cases}
(-1)^{m+j}\; T^{p,q}_{m,n}  & p + q \text{ even } \\
0             & p + q \text{ odd }. 
\end{cases}
$$
\end{itemize}
On the other hand, 
\begin{equation}
d\underline{\alpha} 
= d \left(
\begin{matrix}
\alpha_1 \\ 
\alpha_2 \\
\dots \\ 
\dots \\
\dots \\ 
\alpha_m 
\end{matrix}
\right)
 = \sum_{j=1}^{m-1} 
(-1)^{j-1}
\left(
\begin{matrix}
\alpha_1 \\
\alpha_2 \\
\dots  \\
\alpha_j \alpha_{j+1} \\ 
\dots \\ 
\alpha_m \\
\end{matrix}
\right)
+ 
(-1)^{m-1}
\left(
\begin{matrix}
\alpha_m \alpha_1 \\
\alpha_2 \\
\dots \\
\dots \\
\dots \\
\alpha_m 
\end{matrix}
\right),
\end{equation}
and so $\Psi_n(d\underline\alpha)$ is the sum of the following summands:
\begin{itemize}
\item For all $1 \leq p \leq m$ and $0 \leq q \leq m-p$ 
$$ \shfl\left(
\left(
\begin{matrix}
\alpha_{m-q+1} \dots \alpha_m \alpha_1 \dots \alpha_p \\
\alpha_{p+1} \\
\dots \\
\dots \\
\dots \\
\alpha_{m-q} 
\end{matrix}
\right), 
\left(
\begin{matrix}
1 \\
\dots \\
p+q \\
\text{ in total } \\
\dots \\
1
\end{matrix}
\right)
\right) $$
with the coefficient
$$
\begin{cases}
(-1)^{m-1}\; T^{p-1,q}_{m-1,n} + T^{p,q-1}_{m-1,n} & 
p \text{ even}, q \text{ odd }, \\
0 & \text{ otherwise }. 
\end{cases}
$$
\item For all $1 \leq p \leq m$, $0 \leq q \leq m-p$ and $1 \leq j
\leq m-p-q-1$
$$ \shfl\left(
\left(
\begin{matrix}
\alpha_{m-q+1} \dots \alpha_m \alpha_1 \dots \alpha_p \\
\alpha_{p+1} \\
\dots \\
\alpha_{p+j}\alpha_{p+j+1} \\
\dots \\
\alpha_{m-q} 
\end{matrix}
\right), 
\left(
\begin{matrix}
1 \\
\dots \\
p + q + 1 \\
\text{ in total } \\
\dots \\
1
\end{matrix}
\right)
\right) $$
with the coefficient
$$
\begin{cases}
(-1)^{m+p+j}\; T^{p,q}_{m-1,n}  & p + q \text{ even } \\
0             & p + q \text{ odd }. 
\end{cases}
$$
\end{itemize}

The desired assertion now follows by adding the coefficients 
of the summands in $d(\Psi_n(\underline{\alpha}))$ and 
$\Psi_n(d\underline\alpha)$ to obtain the coefficients
in $(d\Psi_n)(\underline{\alpha})$ and then applying 
Lemma \ref{lemma-technical-results-on-Tpqmn}. 
\end{proof}

\section{Noncommutative orbifold decomposition}
\label{section-noncommutative-baranovsky-decomposition}

Let $\A$ be a small DG category. In this section, we use the mutually
inverse homotopy equivalences 
\begin{equation*}
\begin{tikzcd}
\hochcx_\bullet(\A^{\otimes n};t_n)_{t_n}
\ar[shift left = 1ex]{r}{f}
&
\hochcx_\bullet(\A).
\ar[shift left = 1ex]{l}{g}
\end{tikzcd}
\end{equation*}
constructed explicitly in \S\ref{section-long-cycle-case} to define
(also explicitly) the noncommutative orbifold decomposition:
\begin{equation}
\label{eqn-noncommutative-baranovsky-decomposition-initial}
\hochhom_\bullet(\sym^n \A) \simeq \bigoplus_{\underline{\lambda} \vdash n}
\Sym^{\underline{r}(\underline{\lambda})} \hochhom_\bullet(\A),
\end{equation}
where our notation $\Sym^{\underline{r}(\underline{\lambda})}$ for
symmetric powers of a graded vector space is explained in 
\S\ref{section-symmetric-powers}.

We construct the decomposition 
\eqref{eqn-noncommutative-baranovsky-decomposition-initial}
in two steps. 

\subsection{From the Hochschild homology of a quotient stack 
to the group-twisted Hochschild homologies}
\label{section-from-hh-of-quotient-stack-to-twisted-hhs}

The first step is very general and works for any quotient stack
(under a strong group action), not
just the symmetric one. 

\begin{defn}
\label{defn-maps-nu-xi_g}
Let $\A$ be a small DG category with a strong action of a finite group
$G$, cf.~\S\ref{section-equivariant-dg-categories}. Define maps 
\begin{equation}
\label{eqn-decomposition-of-orbifold-hh-into-the-sum-of-twisted-hhs}
\begin{tikzcd}
\hochcx_\bullet(\A \rtimes G)
\ar[shift left = 1ex]{r}{\nu}
&
\Bigl(
\bigoplus_{ g\in G}
\hochcx_{\bullet}(\A;g)
\Bigr)_{G} 
\ar[shift left = 1ex]{l}{\xi}
\end{tikzcd}
\end{equation}
as follows. The map $\nu$ sends 
\begin{equation*}
\begin{tikzcd}
(\alpha_0, \sigma_0) \otimes \dots \otimes (\alpha_m, \sigma_m) 
\in \hochcx_\bullet(\A \rtimes G) 
\ar[|->]{d}
\\
(\sigma_0\dots\sigma_m)^{-1}(\alpha_0) \otimes (\sigma_1 \dots
\sigma_m)^{-1}(\alpha_1) \otimes \dots \otimes 
(\sigma_{m})^{-1}(\alpha_m)
\in \hochcx_{m}(\A^n;(\sigma_0 \dots \sigma_m)^{-1}). 
\end{tikzcd}
\end{equation*}
It is a generalisation of \cite[Proposition
3.5]{Baranovsky-OrbifoldCohomologyAsPeriodicCyclicHomology}. 
The map $\xi$ is the composition 
\begin{equation}
\label{eqn-definition-of-the-map-xi}
\Bigl(
\bigoplus_{ g\in G}
\hochcx_{\bullet}(\A;g)
\Bigr)_{G} 
\rightarrow 
\bigoplus_{ g\in G}
\hochcx_{\bullet}(\A;g)
\xrightarrow{\sum_{g \in G} \xi_g}
\hochcx_\bullet(\A \rtimes G)
\end{equation}
where the first map is 
\begin{equation}
\label{eqn-coinvariant-embedding-map-symmetriser}
[\underline{\alpha}] \mapsto \frac{1}{|G|} \sum_{h \in G} 
h.\underline{\alpha}
\end{equation}
and each 
$$
\xi_g\colon \hochcx_{\bullet}(\A;g) \rightarrow \hochcx_\bullet(\A \rtimes G)
$$ 
is the map 
\begin{equation}
\alpha_0 \otimes \dots \otimes \alpha_m
\mapsto 
\left((\id,g^{-1}) \circ (\alpha_0, \id)\right) \otimes (\alpha_1, \id) \otimes \dots \otimes
(\alpha_{m-1},\id) \otimes  (\alpha_m,\id). 
\end{equation}
\end{defn}

\begin{lemma}
The maps $\nu$ and $\xi_g$ in Defn.~\ref{defn-maps-nu-xi_g} are 
well defined: they are compatible with the Hochschild differential 
in the twisted complexes $\hochcx_\bullet(\A \rtimes G)$, 
$\Bigl( \bigoplus_{ g\in G} \hochcx_{\bullet}(\A;g) \Bigr)_{G}$, 
and $\hochcx_\bullet(\A;g)$.
\end{lemma}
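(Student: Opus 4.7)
The plan is to verify both chain map conditions by termwise expansion of the Hochschild differential and matching of faces. The key bookkeeping ingredient is the sequence $g_i := (\sigma_i \sigma_{i+1} \cdots \sigma_m)^{-1}$, which satisfies the identity $g_i \circ \sigma_i = g_{i+1}$, used in tandem with the composition law $(\alpha_1, h_1) \circ (\alpha_2, h_2) = (\alpha_1 \circ h_1.\alpha_2,\, h_1 h_2)$ in $\A \rtimes G$.

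For $\nu$ applied to $x = (\alpha_0, \sigma_0) \otimes \cdots \otimes (\alpha_m, \sigma_m)$, one has $\nu(x) = g_0(\alpha_0) \otimes g_1(\alpha_1) \otimes \cdots \otimes g_m(\alpha_m) \in \hochcx_\bullet(\A; g)$ with $g = g_0$. I would expand $d x$ face by face. Each inner face replaces the pair $(\alpha_i, \sigma_i) \otimes (\alpha_{i+1}, \sigma_{i+1})$ by $(\alpha_i \cdot \sigma_i(\alpha_{i+1}), \sigma_i \sigma_{i+1})$, leaving the total $\sigma$-product unchanged, so the image under $\nu$ again sits in $\hochcx_\bullet(\A; g)$. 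The identity $g_i(\alpha_i \cdot \sigma_i(\alpha_{i+1})) = g_i(\alpha_i) \cdot g_{i+1}(\alpha_{i+1})$ then forces the result to coincide with the $i$-th inner face of $\nu(x)$, with matching Koszul signs. The cyclic face rotates the total $\sigma$-product by conjugation, sending $x$ to a chain whose image under $\nu$ lands in $\hochcx_\bullet(\A; g')$ for a $G$-conjugate $g'$ of $g$; passing to $G$-coinvariants identifies this summand with the $g$-summand, and a direct unwinding matches the resulting expression with the cyclic face of $\nu(x)$, whose formula in $\hochcx_\bullet(\A; g)$ uses the left action by $g$ on the bimodule $\leftidx{_g}{\A}$.

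For $\xi_g$, the check is more transparent because all but one of the inserted group elements are trivial. Each inner face either combines two pairs of the form $(\alpha_i, \id) \otimes (\alpha_{i+1}, \id)$ into $(\alpha_i \alpha_{i+1}, \id)$, or involves the distinguished first factor $(\id, g^{-1}) \circ (\alpha_0, \id) = (g^{-1}(\alpha_0), g^{-1})$; in both cases the result is the $\xi_g$-image of the corresponding inner face of the original chain in $\hochcx_\bullet(\A; g)$. The cyclic face pulls $(\alpha_m, \id)$ past the distinguished factor, and the composition law produces an $\A$-morphism whose underlying datum unwinds, after cancelling $g \circ g^{-1}$, to precisely the cyclic face of the original chain, in which the left action by $\alpha_m$ is twisted through the functor $g$. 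The preceding symmetrization $[\underline{\alpha}] \mapsto \frac{1}{|G|}\sum_{h \in G} h.\underline{\alpha}$ is well-defined in $\chr(\kk) = 0$, and its output is $G$-invariant, so $\sum_g \xi_g$ applies unambiguously.

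The main obstacle I anticipate is not the algebraic matching of faces, which is essentially forced by the structure, but the careful tracking of Koszul signs --- in particular the sign $(-1)^{m + |\alpha_m|(|\alpha_0| + \cdots + |\alpha_{m-1}|)}$ governing the cyclic face in $\hochcx_\bullet(\A; g)$ must be shown to agree with the sign produced by the cyclic face in $\hochcx_\bullet(\A \rtimes G)$, after accounting for any sign arising from the coinvariant identification through the action of $\sigma_m$.
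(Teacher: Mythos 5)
Your proposal is correct and follows essentially the same route as the paper: the paper also treats the non-cyclic faces as immediate and concentrates on the cyclic summand $d_{cyc}$, showing for $\nu$ that $\nu d_{cyc}$ and $d_{cyc}\nu$ differ precisely by the action of $\sigma_m$ (so agree only after passing to $G$-coinvariants, exactly your conjugation argument), and for $\xi_g$ that the composition law in $\A \rtimes G$ gives equality on the nose via $(\alpha_m,\id)\circ(\id,g^{-1}) = (\id,g^{-1})\circ(g(\alpha_m),\id)$. The sign bookkeeping you flag is unproblematic since both $\nu$ and $\xi_g$ preserve degrees and the $G$-action is degree-preserving, so the Koszul factors coincide verbatim, as in the paper's displayed computation.
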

\begin{proof}
It is immediate that the maps 
$\nu$ and $\xi_g$ commute with the regular summands of 
the Hochschild differential. 
It remains to verify that this also holds 
for the cyclic permutation summand $d_{cyc}$. 

For map $\nu$, we have 
\begin{small}
\begin{align*}
& \nu\bigl(
d_{cyc} \bigl((\alpha_0, \sigma_0) \otimes \dots \otimes 
(\alpha_{m-1}, \sigma_{m-1}) \otimes (\alpha_m, \sigma_m) \bigr)\bigr)
=
\\
=\;
& \nu\bigl(
(-1)^{m} (-1)^{\alpha_m | \alpha_0, \dotsm \alpha_{m-1}} 
(\alpha_m, \sigma_m) (\alpha_0, \sigma_0) \otimes \dots \otimes 
(\alpha_{m-1}, \sigma_{m-1})\bigr)
= 
\\
=\; 
& 
(-1)^{m} (-1)^{\alpha_m | \alpha_0, \dotsm \alpha_{m-1}} 
(\sigma_m \sigma_0 \dots \sigma_{m-1})^{-1} (\alpha_m)
(\sigma_0 \dots \sigma_{m-1})^{-1}(\alpha_0) \otimes 
\dots \otimes \sigma_{m-1}^{-1}(\alpha_{m-1}), 
\end{align*}
\end{small}
while 
\begin{small}
\begin{align*}
& d_{cyc} \nu\bigl(
(\alpha_0, \sigma_0) \otimes \dots \otimes 
(\alpha_{m-1}, \sigma_{m-1}) \otimes (\alpha_m, \sigma_m) \bigr)
=
\\
=\;
&
d_{cyc}
\bigl(
(\sigma_0\dots\sigma_m)^{-1}(\alpha_0) \otimes \dots \otimes
(\sigma_{m-1} \sigma_m)^{-1}(\alpha_{m-1}) \otimes
(\sigma_{m})^{-1}(\alpha_m)
\bigr)
=
\\
=\;
&
(-1)^{m} (-1)^{\alpha_m | \alpha_0, \dotsm \alpha_{m-1}} 
(\sigma_{m}\sigma_0\dots\sigma_m)^{-1}(\alpha_m)
(\sigma_0\dots\sigma_m)^{-1}(\alpha_0) \otimes 
\dots \otimes 
(\sigma_{m-1}\sigma_m)^{-1}(\alpha_{m-1}) 
= 
\\
=\;
& \sigma_m^{-1} \Bigl(
(-1)^{m} (-1)^{\alpha_m | \alpha_0, \dotsm \alpha_{m-1}} 
(\sigma_{m}\sigma_0\dots\sigma_{m-1})^{-1}(\alpha_m)
(\sigma_0\dots\sigma_{m-1})^{-1}(\alpha_0) \otimes 
\dots \otimes 
(\sigma_{m-1})^{-1}(\alpha_{m-1}) 
\Bigr).
\end{align*}
\end{small}
Thus $d_{cyc} \nu$ and $\nu d_{cyc}$ become equal after 
taking the $G$-coinvariants.  

For each $g \in G$, for map $\xi_g$ we have
\begin{small}
\begin{align*}
& \xi_g \left(d_{cyc} \left( \alpha_0 \otimes \dots \otimes \alpha_m
\right) \right) = 
\\
=
& 
\xi_g \left( 
(-1)^{m} (-1)^{\alpha_m | \alpha_0, \dotsm \alpha_{m-1}} 
g(\alpha_m) \alpha_0 \otimes \dots \otimes \alpha_{m-1}  \right) = 
\\
=
&(-1)^{m} (-1)^{\alpha_m | \alpha_0, \dotsm \alpha_{m-1}} 
\left(\left(\id,g^{-1}\right) \circ \left(g(\alpha_m) 
\alpha_0, \id\right)\right)
\otimes \dots \otimes \left(\alpha_{m-1}, \id\right),
\end{align*}
\end{small}
while
\begin{small}
\begin{align*}
&  d_{cyc}\left( \xi_g\left( \alpha_0 \otimes \dots \otimes \alpha_m
\right) \right) = 
\\
=
& 
d_{cyc} \left( 
\left(\left(\id,g^{-1}\right) \circ \left(\alpha_0, \id\right)\right)
\otimes \dots \otimes \left(\alpha_{m}, \id \right)
\right)
=
\\
=
&
\left(
\left(\alpha_m,\id\right) \circ \left(\id,g^{-1}\right) 
\circ \left(\alpha_0, \id \right)
\right)
\otimes \dots \otimes \left(\alpha_{m-1}, \id \right)
=
\\
=
&
(-1)^{m} (-1)^{\alpha_m | \alpha_0, \dotsm \alpha_{m-1}} 
\left(\left(\id,g^{-1}\right) \circ \left(g(\alpha_m) \alpha_0, \id\right)
\right) \otimes \dots \otimes \left(\alpha_{m-1}, \id \right). 
\end{align*}
\end{small}
Thus $d_{cyc} \xi_g$ and $\xi_g d_{cyc}$ are equal.  
\end{proof}

\begin{prps}
\label{prps-nu-and-xi-are-mutually-inverse-homotopy-equivalences}
The maps $\nu$ and $\xi$ in Defn.~\ref{defn-maps-nu-xi_g} are mutually 
inverse homotopy equivalences and hence induce mutually inverse isomorphisms 
\begin{equation}
\label{eqn-decomposition-of-orbifold-hh-into-the-sum-of-twisted-hhs}
\begin{tikzcd}
\hochhom_\bullet(\A \rtimes G)
\ar[shift left = 1ex]{r}{\nu}
&
\Bigl(
\bigoplus_{ g\in G}
\hochhom_{\bullet}(\A;g)
\Bigr)_{G}.
\ar[shift left = 1ex]{l}{\xi}
\end{tikzcd}
\end{equation}
\end{prps}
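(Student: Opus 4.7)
The plan is to establish that $\nu$ and $\xi$ are mutually inverse up to homotopy in two stages: first verify $\nu \circ \xi = \id$ exactly on the coinvariant complex, and second construct a chain homotopy $\xi \circ \nu \sim \id$ on $\hochcx_\bullet(\A \rtimes G)$. Together these yield mutually inverse homotopy equivalences, and hence mutually inverse isomorphisms on Hochschild homology.

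For the first identity, a direct computation suffices. Taking $\underline{\alpha} = \alpha_0 \otimes \dots \otimes \alpha_m \in \hochcx_\bullet(\A;g)$, the composition law in $\A \rtimes G$ gives
$$\xi_g(\underline{\alpha}) = (g^{-1}(\alpha_0), g^{-1}) \otimes (\alpha_1, \id) \otimes \dots \otimes (\alpha_m, \id);$$
since $\sigma_0 \cdots \sigma_m = g^{-1}$, the map $\nu$ applies $g$ to the first factor to recover $\alpha_0$ and leaves the remaining factors unchanged, so $\nu \circ \xi_g = \id$ on $\hochcx_\bullet(\A;g)$. Combining this with the symmetrization $\tfrac{1}{|G|}\sum_h h\cdot(-)$ in the definition of $\xi$, and using $[h \cdot \underline{\alpha}] = [\underline{\alpha}]$ in the $G$-coinvariants, then gives $\nu \circ \xi = \id$ on the target of $\nu$.

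For $\xi \circ \nu \sim \id$, the chain-level identity $\nu \xi = \id$ implies that $\xi \nu$ is an idempotent chain endomorphism of $\hochcx_\bullet(\A \rtimes G)$, splitting the complex as a direct sum $\img(\xi) \oplus \ker(\nu)$ of subcomplexes, with $\nu|_{\img(\xi)}$ a chain isomorphism onto the coinvariant target. It then suffices to show $\ker(\nu)$ is contractible. The plan is to filter $\hochcx_\bullet(\A \rtimes G)$ by the number of non-identity $\sigma_i$ with $i \geq 1$; a routine check shows that this filtration is preserved by both the regular summands of the Hochschild differential, which merge adjacent factors via $(\alpha_i, \sigma_i)(\alpha_{i+1}, \sigma_{i+1}) = (\alpha_i \sigma_i(\alpha_{i+1}), \sigma_i \sigma_{i+1})$, and by the cyclic summand, which conjugates the total group product by $\sigma_m$. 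On each positive-filtration graded piece, an extra-degeneracy style contracting homotopy inserting and removing identity group elements at the positions where $\sigma_i \neq \id$ should yield the null-homotopy. Alternatively, one could invoke the Feigin--Tsygan--Getzler--Jones decomposition as extended to DG categories by Nordstr\"om to know a priori that the two cohomologies have equal dimension, which combined with the splitting forces $\ker(\nu)$ to be acyclic and hence contractible over our field.

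The main obstacle will be writing the contracting homotopy on $\ker(\nu)$ down explicitly, in the combinatorial spirit of the $\Phi$ and $\Psi$ constructions of Section 3: this requires tracking how the Hochschild differential reshapes the tuple $(\sigma_0, \dots, \sigma_m)$ together with careful sign bookkeeping through repeated insertions of identity group elements. Appealing to Nordstr\"om's theorem as a black box sidesteps these details, at the cost of losing the chain-level explicitness that is the main emphasis of the paper.
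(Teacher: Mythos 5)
Your verification that $\nu\xi=\id$ on the nose is correct and agrees with the paper, and the reduction via the idempotent $e=\xi\nu$ to the splitting $\hochcx_\bullet(\A\rtimes G)=\img(\xi)\oplus\ker(\nu)$, so that it suffices to contract $\ker(\nu)$, is sound (over a field it would even suffice to show $\ker(\nu)$ acyclic, i.e.\ that $\nu$ is a quasi-isomorphism). The gap is in the proposed contraction. An ``extra-degeneracy'' homotopy that inserts and removes identity group elements is a universal combinatorial formula, hence would work over any base field; but the statement genuinely requires $|G|$ to be invertible. For $\A=\kk$ with trivial $G$-action and $G=\mathbb{Z}/p$ in characteristic $p$, the left-hand side is $\hochhom_\bullet(\kk[\mathbb{Z}/p])$, nonzero in all degrees, while the right-hand side is concentrated in degree $0$, so there $\ker(\nu)$ is not even acyclic; consequently no characteristic-independent contraction of it can exist. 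Moreover the specific claim fails already in characteristic $0$: for $k\geq 1$ one has $\mathrm{gr}_k(\ker\nu)=\mathrm{gr}_k\bigl(\hochcx_\bullet(\A\rtimes G)\bigr)$ (correct any chain of filtration $k$ by $\xi\nu$ of it, which lies in filtration $0$), and if all these positive graded pieces were contractible, the spectral sequence of your filtration would identify $\hochhom_\bullet(\A\rtimes G)$ with the homology of the filtration-$0$ subcomplex. For $\A=\kk$, $G=S_3$ the latter is $\kk^{6}$ concentrated in degree $0$, whereas $\hochhom_0(\kk[S_3])=\kk^{3}$. The positive-filtration part is precisely what imposes the conjugation-coinvariance relations in homology, so it carries homology and cannot be contracted away; any correct homotopy for $\xi\nu\simeq\id$ must involve the averaging $\tfrac{1}{|G|}\sum_{g\in G}$, as the explicit degree $-1$ homotopy written out in the paper's proof indeed does.

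Your fallback is also insufficient as stated. Nordstr\"om's result gives an abstract isomorphism of homologies, but the proposition assumes only a small DG category, so the Hochschild homology groups may be infinite dimensional; from $H_n\bigl(\hochcx_\bullet(\A\rtimes G)\bigr)\cong H_n(\mathrm{coinvariants})\oplus H_n(\ker\nu)$ and an abstract isomorphism of the outer terms one cannot conclude $H_n(\ker\nu)=0$ (a space can be isomorphic to a proper direct summand of itself). The dimension count would only work under finiteness hypotheses (e.g.\ smooth and proper $\A$) that the proposition does not make, and in any case it forfeits the explicit chain-level maps the paper is after. The paper's own proof avoids all of this: it states $\nu\xi=\id$ and then exhibits an explicit $G$-averaged degree $-1$ endomorphism of $\hochcx_\bullet(\A\rtimes G)$ whose differential realizes $\xi\nu\simeq\id$ directly, with no filtration argument.
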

\begin{proof}
The composition $\nu \xi$ is the identity map of 
$\Bigl( \bigoplus_{ g\in G}
\hochcx_{\bullet}(\A;g)
\Bigr)_{G}$. The composition $\xi \nu$ is the endomorphism 
of $\hochcx_\bullet(\A \rtimes G)$
which sends any chain 
$$
(\alpha_0, \sigma_0) \otimes (\alpha_1, \sigma_1) \otimes \dots \otimes (\alpha_m, \sigma_m) 
\in \hochcx_m(\A \rtimes G) $$ 
to the chain 
$$
\frac{1}{|G|}\sum\limits_{g\in G}
(g\alpha_0, g\sigma_0\dots \sigma_mg^{-1}) \otimes
(g(\sigma_1\dots\sigma_m)^{-1}(\alpha_1), \id)
\dots \otimes (g\sigma_m^{-1}(\alpha_m), \id) 
\in \hochcx_m(\A \rtimes G). $$ 
Denote $\beta_i^j=(\sigma_i\ldots\sigma_j)^{-1}\alpha_i$ for $0\leq i\leq j \leq m$.
The homotopy between $\xi \nu$ and \eqref{eqn-coinvariant-embedding-map-symmetriser}
on $\hochcx_\bullet(\A \rtimes G)$
is given by the degree $-1$ endomorphism which sends any chain 
$$ 
(\alpha_0, \sigma_0) \otimes (\alpha_1, \sigma_1) \otimes \dots \otimes (\alpha_m, \sigma_m) \in \hochcx_m(\A \rtimes G) $$ 
to the chain 
\begin{multline*}
\frac{1}{|G|}\sum\limits_{g\in G} g.\Biggl(
\sum\limits_{p=0}^{m-1}
\sum\limits_{r, (s_1,\ldots, s_r)\vdash m-p}
\sum\limits_{J\subset \{p+1,\ldots, m+1\}}
(-1)^\eqref{eqn-sign-main-homotopy-xinu-id}
(\beta_{m-p-r+1}^m\ldots\beta_m^m \alpha_0,\sigma_0) \otimes
\\
 \otimes (\alpha_1,\sigma_1)\otimes \ldots \otimes (\alpha_p,\sigma_p)
\otimes X_{p+1}(p,\underline{s},J) \otimes \ldots \otimes X_{m+1}(p,\underline{s},J)\Biggr)+
\\
+\frac{1}{|G|}\sum\limits_{g\in G}\sum\limits_{i=0}^m
(-1)^{i-1}
(g(\beta_1^m\ldots\beta_m^m), g\sigma_0)\otimes (\id,\sigma_1) \otimes \ldots \otimes (\id,\sigma_i)
\otimes (\id, g^{-1}) 
\\
\otimes (\id, g\sigma_{i+1}g^{-1})\otimes \ldots \otimes (\id, g\sigma_mg^{-1}),
\end{multline*}
where
$J$ is a subset of $\{p+1,\ldots, m+1\}$ such that for all $i\in \{p+1,\ldots, m+1\}$
we have $\sum_{j\in J,\ j<i} s_j  \geq |\{j\notin J, p<j\leq i\}|$; in particular, $p+1\in J$ always.
Furthermore, let $k(i)=|\{j\in J,\ j<i\}$, then let
$$
X_i(p,s_1,\ldots, s_r,J)=(\id, \sigma_{p+s_1+\ldots s_{k(i)}+1}\ldots\sigma_{p+s_1+\ldots s_{k(i)+1}})
$$
if $i\in J$, and
$$
X_i(p,s_1,\ldots, s_r, J) = (\beta_{i-k(i)}^{p+s_1+\ldots + s_{k(i)}-i+k(i)}, \id)
$$
if $i\notin J$.
In other words, after $(\alpha_p, \sigma_p)$ we disjoin every $(\alpha_i, \sigma_i)$ into
$(\alpha_i, \id)$ and $(\id, \sigma_i)$, commute some $(\id, \sigma_i)$ towards the left
according to the relations in $\A\rtimes S_n$,
multiply some of them together, and rotate some $(\beta_q^m,\id)$ from the right end
into the first term in the chain, all of it so that $m+2$ total terms remain.
We think of $(s_1,\ldots, s_r)$ as the ordered partition of the set $\{\sigma_{p+1},\ldots, \sigma_m\}$ and
the set $J$ as the set of indices where the products of $(\id, \sigma_i)$ end up.
Then $\sigma_0, \ldots, \sigma_{p+s_1+\ldots+s_{k(i)}}$ are the ones that are to the left of the term number $i$.

The sign in the second summation is given by
\begin{equation}
\label{eqn-sign-main-homotopy-xinu-id}
\left(\sum\limits_{i=0}^{m-p-r} \deg\alpha_i\right)\cdot 
\left(\sum\limits_{i=m-p-r+1}^m \deg\alpha_i\right)
+(p+m-1)r
+\sum\limits_{j\in J} j-p-k(j).
\end{equation}
\end{proof}

\subsection{From $\hochcx_\bullet(\A^{\otimes
n};\sigma_{\underline{\lambda}})_{C(\sigma_{\underline{\lambda}})}$ 
to $\Sym^{\underline{r}(\underline{\lambda})}\hochcx_\bullet(\A)$}
\label{section-from-underline-n-twisted-hh-to-sym-underline-n-hh}

For each conjugacy class $\underline{\lambda} \vdash n$ of $S_n$, choose
the representative 
$$ \sigma_{\underline{\lambda}} := \bigl(1\dots n_1\bigr) 
\bigl((n_1+1) \dots (n_1 + n_2) \bigr) \dots \bigl( (n_1 + \dots +
n_{r(\underline{\lambda}-1)} + 1) \dots n \bigr) \in S_n $$
where $n_1 \leq n_2 \leq \dots \leq n_{r(\underline{\lambda})}$ is the 
unique ordering of the parts of $\underline{\lambda}$ in non-decreasing
order. For example if $\underline{\lambda} = \left\{ 
1 1 1 2 3 3 3 3 4 4 \right\}$, then 
$$ \sigma_{\underline{\lambda}} = (1) (2) (3) (45) (678) (9\;10\;11) (12\;
13\; 14) (15\; 16\; 17) (18\; 19\; 20\; 21) (22\; 23\; 24\; 25). $$

This (or any other) choice defines an isomorphism 
\begin{equation}
\label{eqn-choice-of-conjugacy-class-representative-iso}
\Bigl(
\bigoplus_{ \sigma \in S_n}
\hochcx_{\bullet}(\A; \sigma)
\Bigr)_{S_n}
\xrightarrow{\sim}
\bigoplus_{\underline{\lambda} \vdash n}
\hochcx_{\bullet}(\A^{\otimes
n};\sigma_{\underline{\lambda}})_{C(\sigma_{\underline{\lambda}})},
\end{equation}
where $C(-)$ denotes the centraliser of a group element. This
isomorphism is given by the projection to the corresponding direct
summands. 

We then define:

\begin{defn}
\label{defn-maps-f-underline-n-and-g-underline-n}
Let $\A$ be a small DG category. For any $n \geq 0$ and 
any $\underline{\lambda} \vdash n$, define the maps
\begin{small}
 
\begin{equation}
\begin{tikzcd}
\hochcx_\bullet(\A^{\otimes
n};\sigma_{\underline{\lambda}})_{C(\sigma_{\underline{\lambda}})}
\ar[shift left = 1ex]{r}{f_{\underline{\lambda}}}
&
\Sym^{\underline{r}(\underline{\lambda})} \hochcx_\bullet(\A)
\ar[shift left = 1ex]{l}{g_{\underline{\lambda}}}
\end{tikzcd}
\end{equation}
to be given by the composition
\begin{equation*}
\begin{tikzcd}
\hochcx_\bullet(\A^{\otimes n};\sigma_{\underline{\lambda}})_{C(\sigma_{\underline{\lambda}})}
\ar[shift left = 1ex]{r}{\awmap}
&
\ar[shift left = 1ex]{l}{\ezmap}
\Bigl(\bigotimes_{i = 1}^{r(\underline{\lambda})} \hochcx_\bullet(\A^{\otimes
n_i};t_{n_i})_{t_{n_i}}\Bigr)_{S_{\underline{r}(\underline{\lambda})}}
\ar[shift left = 1ex]{r}{\otimes f_{n_i}}
&
\ar[shift left = 1ex]{l}{\otimes g_{n_i}}
\Bigl(\bigotimes_{i = 1}^{r(\underline{\lambda})} \hochcx_\bullet(\A) \Bigr)_{S_{\underline{r}(\underline{\lambda})}}
\ar[equals]{d}
\\
&
&
\Sym^{\underline{r}(\underline{\lambda})} \hochcx_\bullet(\A),
\end{tikzcd}
\end{equation*}
\end{small}
where 
\begin{itemize}
\item $\ezmap$ and $\awmap$ are the Eilenberg-Zilber and the
Alexander-Whitney maps which lift to the level of
Hochschild complexes the K{\"u}nneth isomorphism and its inverse, 
cf.~\S\ref{subsection-kunneth-isomorphism}.
We first apply these without the coinvariants,
using $\A^{\otimes{n}} =
\bigotimes_{i=1}^{r(\underline{\lambda})} \A^{n_i}$ and 
$\sigma_{\underline{\lambda}} = t_{n_1} \times \dots \times t_{n_{r(\underline{\lambda})}} 
\in S_{n_1} \times \dots \times S_{n_{r(\underline{\lambda})}} < S_n$. 
To see that they descend to the coinvariant spaces, we observe
that $\ezmap$ and $\awmap$ intertwine the action of each $t_{n_i}$
on the RHS with the action on the LHS of 
$t_{n_i} \in S_{n_i} < S_{n_1} \times \dots \times
S_{n_{r(\underline{\lambda})}} < S_n$. Furthermore, they
intertwine the action of $S_{\underline{r}(\underline{\lambda})}$
on the LHS and with the action on the RHS of 
$S_{\underline{r}(\underline{\lambda})} < S_n$, the subgroup which
permutes the parts of the same size in $n_1, \dots,
n_{\underline{r}(\underline{\lambda})}$. Finally,  
$C(t_{n_1} \times \dots \times t_{n_{r(\underline{\lambda})}})$
is generated by $\left<t_{n_1}\right> \times \dots 
\left<t_{n_{r(\underline{\lambda})}}\right>$ 
and $S_{\underline{r}(\underline{\lambda})}$. 
\item The maps $f_{n_i}$ and $g_{n_i}$ are the mutually inverse
homotopy equivalences constructed in \S\ref{section-long-cycle-case},
cf.~\S\ref{section-map-f} and \S\ref{section-map-g}. 

\item The second vertical equality is due to our notation
for the symmetric power indexed by an ordered partition,
cf.~\S\ref{section-symmetric-powers}.

\end{itemize}
\end{defn}

\subsection{Noncommutative orbifold decomposition}

We now put together all the maps we constructed
so far to prove the main theorem of this paper:

\begin{theorem}[Noncommutative orbifold decomposition]
\label{theorem-noncommutative-baranovsky-decomposition}
Let $\A$ be a small DG category and $n \geq 0$. The following
compositions 
are mutually inverse homotopy equivalences of twisted complexes over $\modk$:
\begin{small}
\begin{equation}
\label{eqn-noncommutative-baranovsky-decomposition-complexes}
\begin{tikzcd}
\hochcx_\bullet(\sym^n A)
\ar[shift left = 1ex]{r}{\nu}
&
\Bigl(
\bigoplus_{ \sigma \in S_n}
\hochcx_{\bullet}(\A^{\otimes n};\sigma)
\Bigr)_{S_n} 
\ar[shift left = 1ex]{l}{\xi}
\ar[shift left = 1ex]{d}{\eqref{eqn-choice-of-conjugacy-class-representative-iso}}
& 
\\
&
\bigoplus_{\underline{\lambda} \vdash n}
\hochcx_{\bullet}(\A^{\otimes n};\sigma_{\underline{\lambda}})_{C(\sigma_{\underline{\lambda}})}
\ar[shift left = 1ex]{u}{\eqref{eqn-choice-of-conjugacy-class-representative-iso}^{-1}}
\ar[shift left = 1ex]{r}{\sum f_{\underline{\lambda}}}
&
\ar[shift left = 1ex]{l}{\sum g_{\underline{\lambda}}}
\bigoplus_{\underline{\lambda} \vdash n}
\Sym^{\underline{r}(\underline{\lambda})} \hochcx_{\bullet}(\A),
\end{tikzcd}
\end{equation}
\end{small}
where $\nu$ and $\xi$ are the maps constructed in 
\S\ref{section-from-hh-of-quotient-stack-to-twisted-hhs}
and $f_{\underline{\lambda}}$ and $g_{\underline{\lambda}}$ -- in 
\S\ref{section-from-underline-n-twisted-hh-to-sym-underline-n-hh}. 

In particular, denote the rightward through composition by $\zeta_n$ and 
the leftward by $\eta_n$. These induce mutually inverse isomorphisms: 
\begin{equation}
\label{eqn-noncommutative-baranovsky-decomposition-hh}
\begin{tikzcd}
\hochhom_\bullet(\sym^n \A)
\ar[shift left = 1.25ex]{r}{\zeta_n}[']{\simeq}
&
\ar[shift left = 1.25ex]{l}{\eta_n}
\bigoplus_{\underline{\lambda} \vdash n}
\Sym^{\underline{r}(\underline{\lambda})} \hochhom_{\bullet}(\A).
\end{tikzcd}
\end{equation}
\end{theorem}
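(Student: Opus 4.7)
The plan is to assemble the theorem from three layers along the diagram, each of which will separately be established as a pair of mutually inverse homotopy equivalences of twisted complexes over $\modk$. Their composition is then automatically of the same kind, and passing to cohomology produces \eqref{eqn-noncommutative-baranovsky-decomposition-hh}.

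For the top layer, the mutually inverse homotopy equivalences between $\hochcx_\bullet(\sym^n \A)$ and $\bigl(\bigoplus_{\sigma \in S_n} \hochcx_\bullet(\A^{\otimes n}; \sigma)\bigr)_{S_n}$ provided by $\nu$ and $\xi$ are exactly Proposition~\ref{prps-nu-and-xi-are-mutually-inverse-homotopy-equivalences} applied to $\A^{\otimes n}$ with its strong $S_n$-action; nothing further is needed. For the middle layer, $S_n$ acts on the indexing set of the direct sum $\bigoplus_{\sigma \in S_n} \hochcx_\bullet(\A^{\otimes n}; \sigma)$ by conjugation, so the orbits are conjugacy classes and the stabilisers are the centralisers $C(\sigma_{\underline{\lambda}})$. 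Standard coinvariants-of-induced-representations then yields \eqref{eqn-choice-of-conjugacy-class-representative-iso} on the nose as an isomorphism of twisted complexes.

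The main work lies in the bottom layer: that $f_{\underline{\lambda}}$ and $g_{\underline{\lambda}}$ of Defn.~\ref{defn-maps-f-underline-n-and-g-underline-n} are mutually inverse homotopy equivalences. By construction each is a two-step composition of an Eilenberg--Zilber/Alexander--Whitney map for the product decomposition $\A^{\otimes n} = \bigotimes_i \A^{\otimes n_i}$ with a tensor product $\bigotimes_i f_{n_i}$ (resp.\ $\bigotimes_i g_{n_i}$). The first step is a mutually inverse homotopy equivalence by the classical K\"unneth theorem of \S\ref{subsection-kunneth-isomorphism}. The second step is a mutually inverse homotopy equivalence by Theorems~\ref{theorem-phi-is-the-homotopy-of-gf-minus-id} and \ref{theorem-psi-is-the-homotopy-of-fg-minus-id} applied factor-by-factor, with the total homotopy obtained as the usual tensor product of the factor-wise homotopies $\Phi_{n_i}$ and $\Psi_{n_i}$ with identities on the remaining factors. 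The only nontrivial point is that both steps descend to the coinvariant quotients appearing on either side.

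The main obstacle I anticipate is precisely this equivariance check, which has to be carried out under two separate group actions. Under each $\langle t_{n_i}\rangle$ the descent follows from the fact that the maps $f_{n_i}$ and $g_{n_i}$ were already defined as maps from/to $t_{n_i}$-coinvariants in \S\ref{section-map-f} and \S\ref{section-map-g}, whereas for EZ and AW the corresponding cyclic property is the one underlying Connes' cyclic structure and is standard. Under the block-permutation subgroup $S_{\underline{r}(\underline{\lambda})} < C(\sigma_{\underline{\lambda}})$ that permutes tensor factors of equal size, the tensor products $\bigotimes_i f_{n_i}$ and $\bigotimes_i g_{n_i}$ are equivariant up to the expected Koszul signs simply because the same map appears in equal-$n_i$ positions, and the EZ/AW maps are equivariant under permutations of tensor factors because they are built out of shuffles in a symmetric way. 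Once the equivariance is pinned down, exactness of $G$-coinvariants for finite $G$ in characteristic zero promotes the homotopy equivalences to the coinvariant spaces, the three layers compose to give \eqref{eqn-noncommutative-baranovsky-decomposition-complexes}, and passing to cohomology yields the stated isomorphisms $\zeta_n$ and $\eta_n$.
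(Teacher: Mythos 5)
Your proposal is correct and follows essentially the same route as the paper: $\nu$ and $\xi$ are handled by Proposition~\ref{prps-nu-and-xi-are-mutually-inverse-homotopy-equivalences}, the middle isomorphism \eqref{eqn-choice-of-conjugacy-class-representative-iso} is immediate, the maps $f_{\underline{\lambda}}$ and $g_{\underline{\lambda}}$ are treated as compositions of the Eilenberg--Zilber/Alexander--Whitney maps with $\otimes f_{n_i}$, $\otimes g_{n_i}$ using Theorems~\ref{theorem-phi-is-the-homotopy-of-gf-minus-id} and \ref{theorem-psi-is-the-homotopy-of-fg-minus-id}, and exactness of coinvariants in characteristic zero gives the passage to homology. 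The equivariance/descent check you flag is exactly the point the paper disposes of inside Definition~\ref{defn-maps-f-underline-n-and-g-underline-n}, so your argument matches the paper's in substance as well as structure.
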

\begin{proof}
By Prop.~\ref{prps-nu-and-xi-are-mutually-inverse-homotopy-equivalences},
the maps $\nu$ and $\xi$ are mutually inverse homotopy equivalences.
By construction, the 
map $\eqref{eqn-choice-of-conjugacy-class-representative-iso}$ is an
isomorphism. Finally, by their construction in 
Defn.~\ref{defn-maps-f-underline-n-and-g-underline-n}, 
the maps $f_{\underline{\lambda}}$ and $g_{\underline{\lambda}}$ are 
compositions of the Eilenberg-Zilber and the
Alexander-Whitney maps $\ezmap$ and $\awmap$ with the maps 
$\otimes f_{n_i}$ and $\otimes g_{n_i}$. The former two are
mutually inverse homotopy equivalences lifting the K{\"u}nneth
isomorphism and its inverse, cf.~\S\ref{subsection-kunneth-isomorphism}.
The latter two are mutually inverse homotopy equivalences by Theorems
\ref{theorem-phi-is-the-homotopy-of-gf-minus-id} and 
\ref{theorem-psi-is-the-homotopy-of-fg-minus-id}. 

Finally, over a field of characteristic $0$ taking the coinvariants is an
exact functor. Thus passing to the cohomology in
\eqref{eqn-noncommutative-baranovsky-decomposition-complexes}
we obtain
\eqref{eqn-noncommutative-baranovsky-decomposition-hh}. 
\end{proof}

\subsection{Symmetric algebra structure}

In this section, we observe that our noncommutative orbifold decomposition 
\eqref{eqn-noncommutative-baranovsky-decomposition-complexes} defines
an explicit isomorphism between the total Hochschild homology 
$\bigoplus_{n \geq 0} \hochhom_\bullet(\sym^n \A)$ and the
symmetric algebra $S^* \left( \hochhom_\bullet(\A) \otimes t
\kk[t] \right)$. We then use this in 
\S\ref{section-hopf-algebra-and-lambda-ring-structures} to translate 
some well-known structures which exist on the symmetric algebra to 
the total Hochschild homology. 

We first introduce some notation. Recall that for any $E \in
\pretriagmns(\modk)$ and any ordered partition $(n_1, \dots, n_m)$
of $n$ we write 
$$\Sym^{n_1, \dots, n_m} E := (E^{\otimes n})_{S_{n_1, \dots, n_m}} \simeq 
\Sym^{n_1} E \otimes \Sym^{n_2} E \otimes \dots \otimes \Sym^{n_m} E.$$
View $t \kk[t]$ as a $\kk$-vector space and hence as an object in $\modk$. 
It has a basis $\left\{ t, t^2, t^3, \dots, \right\}$. 
Given any $E \in \pretriagmns(\modk)$, we have 
$$ E \otimes t\kk[t] \simeq \bigoplus_{i \geq 1} E \otimes t^i. $$
Write, for brevity, $E t^i$ for $E \otimes t^i$. We have
\begin{align}
S^*(E \otimes t \kk[t]) := 
& \bigoplus_{m \geq 0} \Sym^{m}(E \otimes t \kk[t])
\simeq \bigoplus_{m \geq 0} \Sym^{m}(\bigoplus_{i \geq 1} E t^i) \simeq 
\\
\nonumber
\simeq 
& 
\bigoplus_{m \geq 0} \;\;
\bigoplus_{
\begin{smallmatrix}
m_1, m_2, \dots \in \mathbb{Z}_{\geq 0}, \\
\sum m_i = m
\end{smallmatrix}
}
\Sym^{m_1,m_2,m_3,\dots} E\;\; (t^1)^{m_1}(t^2)^{m_2}\dots 
\end{align}
As we view $t \kk[t]$ only as a vector space, in 
the symmetric algebra $(t^i)^j \neq t^{ij}$. 

We next regroup the summands using the fact the each 
ordered partition $m_1, m_2, \dots$ of $m$ is
$\underline{r}(\underline{\lambda})$ of the unique unordered partition 
$\underline{\lambda} \vdash n$ where $n = \sum  m_i i$ and 
$\underline{\lambda}$ has $m_1$ parts of size 1, $m_2$ parts of size $2$, etc. 
For example, in the sum above the ordered partition $(4,0,1,2,0,\dots)$ 
of $7$ indexes the monomial 
$$ (t^1)^4 t^3 (t^4)^2 = t^1 t^1 t^1 t^1 t^3 t^4 t^4. $$
We can think of the same monomial as being indexed by the
unordered partition $\left\{1111344\right\}$ of $15$. 
We thus define for any $n \geq 0$ and any $\underline{\lambda} \vdash n$
$$ t^{\underline{\lambda}} := t^{n_1}t^{n_2} \dots t^{n_{r(\underline{\lambda})}}, $$
and regroup the above as:
\begin{equation}
\bigoplus_{n \geq 0} \;\;
\bigoplus_{
\underline{\lambda}\vdash n 
}
\left(\Sym^{\underline{r}(\underline{\lambda})} E \right)t^{\underline{\lambda}}. 
\end{equation}

We introduce the following notation. For any $n \geq 0$ and
$\underline{\lambda} \vdash n$, any element of 
$\Sym^{\underline{r}(\underline{\lambda})} E$ can be written as 
the sum of basic elements 
$$ \underline{s}_1 \otimes \dots \otimes \underline{s}_n
\quad \quad \underline{s}_i \in \Sym^{\underline{r}(\underline{\lambda})} E,
$$
and we further write
$$ \underline{s}_i = s_{i1} \vee \dots \vee s_{ir_i(\underline{\lambda})} \quad \quad s_{ij} \in E. $$
Here we distinguish between the skew-commutative product $\vee$ within
each $\Sym^{r_i(\underline{\lambda})} E$ and ordinary tensor product $\otimes$ in 
$\Sym^{r_1(\underline{\lambda})} E \otimes \dots \otimes 
\Sym^{r_n(\underline{\lambda})} E$. 

For example, when $n = 15$ and $\underline{\lambda} = (1111344)$, we
have
$$ t^{\underline{\lambda}} = t^1 t^1 t^1 t^1 t^3 t^4 t^4, $$
$$ \Sym^{\underline{r}(\underline{\lambda})} = 
\Sym^4 E \otimes E \otimes \Sym^2 E, $$
and any element of $\Sym^{\underline{r}(\underline{\lambda})} E$ 
can be written as a sum of the elements of form
$$ \underline{s} = (s_{11} \vee s_{12} \vee s_{13} \vee s_{14})
\otimes (s_{31}) \otimes (s_{31} \vee s_{32}). $$

We thus have an isomorphism 
\begin{equation}
\label{eqn-symmetric-algebra-decomposition}
S^*(E \otimes t \kk[t]) \simeq 
\bigoplus_{n \geq 0} \;\;
\bigoplus_{
\underline{\lambda} \vdash n 
}
\left(\Sym^{\underline{r}(\underline{\lambda})} E\right) t^{\underline{\lambda}}. 
\end{equation}
It sends any
$$ s_1 t^{n_1} \vee s_2 t^{n_2} \vee \dots \vee s_{m} t^{n_m} \in
S^*(E \otimes t \kk[t]) $$
with $n_1 \leq n_2 \leq \dots \leq n_m$ to 
$$ (s_1 \vee \dots \vee s_{m_1}) \otimes
(s_{m_1+1} \vee \dots \vee s_{m_1+m_2}) \otimes \dots \otimes
(s_{m_1 + \dots + m_{m-1} + 1} \vee \dots \vee s_n) t^{\underline{\lambda}} $$
where $n = \sum n_i$, $\underline{\lambda} = \left\{n_1, \dots, n_m\right\}$
and $m_i = r_i(\underline{\lambda})$. 
The inverse isomorphism sends any 
$$ \underline{s}_1 \otimes \dots \otimes \underline{s}_n
t^{\underline{\lambda}} \in \left(\Sym^{\underline{r}(\underline{\lambda})}
E\right) t^{\underline{\lambda}},
$$
to 
$$ (s_{11} t^1) \vee (s_{12} t^1) \vee \dots \vee (s_{n
r_n(\underline{\lambda})} t^n). $$

We thus have the folowing description of the total Hochschild homology of $\sym^n \A$: 

\begin{theorem}
\label{theorem-total-hh-of-sym-stacks-iso-to-sym-algebra-of-hh-otimes-tkt}
Let $\A$ be a small DG category. Define the maps 
\begin{small}
\label{eqn-symmetric-algebra-homotopy-equivalence-complexes}
\begin{equation}
\begin{tikzcd}
\bigoplus_{n \geq 0} \hochcx_\bullet(\sym^n A)
\ar[shift left = 1ex]{r}{\zeta}
&
S^*(\hochcx_\bullet(\A) \otimes t \kk[t])
\ar[shift left = 1ex]{l}{\eta}
\end{tikzcd}
\end{equation}
\end{small}
to be the compositions 
\begin{small}
\label{eqn-symmetric-algebra-homotopy-equivalence-complexes}
\begin{equation*}
\begin{tikzcd}
\bigoplus_{n \geq 0} \hochcx_\bullet(\sym^n A)
\ar[shift left = 1ex]{r}{\sum_n \zeta_n}
&
\ar[shift left = 1ex]{l}{\sum_n \eta_n}
\bigoplus_{n \geq 0} 
\bigoplus_{
\underline{\lambda} \vdash n 
}
\Sym^{\underline{r}(\underline{\lambda})} \hochcx_\bullet(\A)\; t^{\underline{\lambda}}
\ar[shift left = 1ex]{r}{\eqref{eqn-symmetric-algebra-decomposition}^{-1}}
&
\ar[shift left = 1ex]{l}{\eqref{eqn-symmetric-algebra-decomposition}}
S^*(\hochcx_\bullet(\A) \otimes t \kk[t])
\end{tikzcd}
\end{equation*}
\end{small}

Then $\zeta$ and $\eta$ are mutually inverse homotopy equivalences. In
particular, they induce mutually inverse isomorphims:
\begin{equation}
\label{eqn-symmetric-algebra-homotopy-equivalence-hh}
\begin{tikzcd}
\bigoplus_{n \geq 0} \hochhom_\bullet(\sym^n A)
\ar[shift left = 1.24ex]{r}{\zeta}[']{\simeq}
&
S^*(\hochhom_\bullet(\A) \otimes t \kk[t]). 
\ar[shift left = 1.25ex]{l}{\eta}
\end{tikzcd}
\end{equation}
\end{theorem}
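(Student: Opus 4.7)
The plan is to assemble the theorem directly from Theorem \ref{theorem-noncommutative-baranovsky-decomposition}, which already supplies the difficult piece, namely the explicit mutually inverse homotopy equivalences $\zeta_n$ and $\eta_n$ between $\hochcx_\bullet(\sym^n \A)$ and $\bigoplus_{\underline{\lambda} \vdash n} \Sym^{\underline{r}(\underline{\lambda})}\hochcx_\bullet(\A)$. Taking the direct sum over $n \geq 0$ preserves the homotopy equivalence, since a homotopy on each summand assembles componentwise into a homotopy on the direct sum. This yields mutually inverse homotopy equivalences $\sum_n \zeta_n$ and $\sum_n \eta_n$ between $\bigoplus_{n \geq 0} \hochcx_\bullet(\sym^n \A)$ and $\bigoplus_{n \geq 0} \bigoplus_{\underline{\lambda} \vdash n} \Sym^{\underline{r}(\underline{\lambda})} \hochcx_\bullet(\A)\, t^{\underline{\lambda}}$, where the $t^{\underline{\lambda}}$ are inert bookkeeping symbols that do not affect the differential.

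Next I would verify that the map \eqref{eqn-symmetric-algebra-decomposition} is a genuine isomorphism of twisted complexes over $\modk$, not merely a homotopy equivalence. This is the purely combinatorial observation that the graded symmetric algebra on a direct sum decomposes as a tensor product of symmetric algebras on the summands:
\begin{equation*}
S^*\Bigl(\bigoplus_{i \geq 1} E t^i\Bigr) \simeq \bigotimes_{i \geq 1} \Sym^*(E t^i),
\end{equation*}
and that each $\Sym^m(E t^i)$ is $\Sym^m E \cdot (t^i)^m$ since the $t^i$ have degree zero and are treated as independent generators. Regrouping the resulting monomials $(t^1)^{m_1}(t^2)^{m_2}\cdots$ by the unordered partition $\underline{\lambda}$ that they encode via $m_i = r_i(\underline{\lambda})$ is the identification \eqref{eqn-symmetric-algebra-decomposition}. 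The forward and inverse maps are given explicitly in the preamble to the theorem, and compatibility with the Hochschild differential is automatic, as the isomorphism only reshuffles direct summands on which the differential acts factorwise.

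Composing $\sum_n \zeta_n$ and $\sum_n \eta_n$ with \eqref{eqn-symmetric-algebra-decomposition}${}^{-1}$ and \eqref{eqn-symmetric-algebra-decomposition} respectively gives the maps $\zeta$ and $\eta$ defined in the statement. Since a composition of a homotopy equivalence with an isomorphism is a homotopy equivalence, and since the two compositions $\zeta \eta$ and $\eta \zeta$ match the corresponding compositions of the factors, $\zeta$ and $\eta$ are mutually inverse homotopy equivalences. Finally, since $\hochhom_\bullet$ is the cohomology of these twisted complexes and cohomology is preserved by homotopy equivalence, passing to cohomology yields the mutually inverse isomorphisms \eqref{eqn-symmetric-algebra-homotopy-equivalence-hh}.

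There is no serious obstacle in this argument; all the technical content lives in Theorem \ref{theorem-noncommutative-baranovsky-decomposition}. The only point that requires care is the bookkeeping in \eqref{eqn-symmetric-algebra-decomposition} — specifically the convention that places the $t^{\underline{\lambda}}$ factor to the right of the $\Sym^{\underline{r}(\underline{\lambda})}$ factor and the matching of the $\vee$-products with the appropriate block sizes $r_i(\underline{\lambda})$ — but this is purely notational and presents no mathematical difficulty.
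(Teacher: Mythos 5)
Your proposal is correct and follows essentially the same route as the paper: the paper's own proof simply observes that \eqref{eqn-symmetric-algebra-decomposition} is an isomorphism (constructed explicitly just before the theorem) and that $\sum_n \zeta_n$ and $\sum_n \eta_n$ are mutually inverse homotopy equivalences by Theorem \ref{theorem-noncommutative-baranovsky-decomposition}. Your extra remarks on summand-wise homotopies and the bookkeeping of the $t^{\underline{\lambda}}$ factors just spell out details the paper leaves implicit.
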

\begin{proof}
The map \eqref{eqn-symmetric-algebra-decomposition} is an isomorphism, 
and by Theorem \ref{theorem-noncommutative-baranovsky-decomposition}
the maps $\sum_n \zeta_n$ and $\sum_n \eta_n$ are mutually inverse
homotopy equivalences.

\end{proof}

\section{Hopf algebra and $\lambda$-ring structures}
\label{section-hopf-algebra-and-lambda-ring-structures}

In this section we use explicit isomorphisms $\eta$ and $\zeta$
between  $\bigoplus_{n \geq 0} \hochhom_\bullet(\sym^n A)$
and $S^*(\hochhom_\bullet(\A) \otimes t \kk[t])$ established in $\S4$ 
to compare -- and define -- some structures on these spaces.  

\subsection{Hopf algebra structure}

For any $\kk$-vector space $E$ its tensor algebra $T^*(E)$ is
naturally a Hopf algebra, and the symmetric algebra $S^*(E)$
is its Hopf subalgebra, cf. \cite[\S4.0]{Sweedler-HopfAlgebras}.
For $S^*(E)$ the Hopf algebra structure agrees with the universal 
enveloping algebra of the Lie algebra $E$ with the zero bracket.

We recall the details to introduce the notation:

\begin{defn}
\label{defn-hopf-algebra-structure-on-the-symmetric-algebra}
Let $\A$ be a small DG category and let $E \in \pretriagmns(\A)$. 
The symmetric algebra $S^*(E)$ is a Hopf algebra with the following
operations:

\begin{itemize}
\item Multiplication 
$\mu\colon S^*(E) \otimes S^*(E) \rightarrow S^*(E)$ given by 
$$ (s_1 \vee \dots \vee s_n) \otimes (t_1 \vee \dots \vee t_m) 
\rightarrow s_1 \vee \dots \vee s_n \vee t_1 \vee \dots \vee t_m, $$

\item Comultiplication
$\Delta\colon S^*(E) \rightarrow S^*(E) \otimes S^*(E) $ given by 
$$ (s_1 \vee \dots \vee s_n) 
\rightarrow \sum_{
\left\{ i_1, \dots, i_m \right\} 
\subseteq \left\{ 1, \dots, n \right\} }
(-1)^{p}
(s_{i_1} \vee \dots \vee s_{i_m}) \otimes (s_{j_1} \vee \dots \vee
s_{j_{n-m}}), $$
where $\left\{ j_1, \dots, j_{n-m} \right\} = \left\{1,\dots, n\right\}
\setminus \left\{ i_1, \dots, i_m \right\}$ and the sign twist $(-1)^p$ 
is the product of $(-1)^{\deg(s_{i_k})\deg(s_{j_{l}})}$ for 
each $i_k > j_l$,

\item Unit $u\colon \kk \rightarrow S^*(E)$ and counit 
$\epsilon\colon S^*(E)\rightarrow \kk$ given by the inclusion 
of and the projection onto $S^0(E) = \kk$,

\item Antipode map $H\colon S^*(E) \rightarrow S^*(E)$ given 
on $S^n(E)$ by $(-1)^n \id$. 
\end{itemize}
\end{defn}

In particular, this defines a Hopf algebra structure on the symmetric
algebra $S^*(\hochhom_\bullet(\A) \otimes t \kk[t])$. We next compute
the corresponding Hopf algebra structure on 
$\bigoplus_{n \geq 0} \hochhom_\bullet(\sym^n A)$:

\begin{theorem}
\label{theorem-hopf-algebra-structure-on-the-total-hh-of-symn-a}
Let $\A$ be a small DG category. The isomorphisms $\zeta$ and $\eta$
of Theorem 
\ref{theorem-total-hh-of-sym-stacks-iso-to-sym-algebra-of-hh-otimes-tkt} 
\begin{equation*}
\begin{tikzcd}
\bigoplus_{n \geq 0} \hochhom_\bullet(\sym^n \A)
\ar[shift left = 1.24ex]{r}{\zeta}[']{\simeq}
&
S^*(\hochhom_\bullet(\A) \otimes t \kk[t]). 
\ar[shift left = 1.25ex]{l}{\eta}
\end{tikzcd}
\end{equation*}
identify the Hopf algebra operations $\mu$, $\Delta$, $u$, $\epsilon$,
and $H$ on $S^*(\hochhom_\bullet(\A) \otimes t \kk[t])$ defined
in Definition \ref{defn-hopf-algebra-structure-on-the-symmetric-algebra}
with the following operations on 
$\bigoplus_{n \geq 0} \hochhom_\bullet(\sym^n \A)$:
\begin{itemize}
\item The map  
$$\mu'\colon
\bigl(\bigoplus_{n \geq 0} \hochhom_\bullet(\sym^n \A)\bigr)
\otimes 
\bigl(\bigoplus_{n \geq 0} \hochhom_\bullet(\sym^n \A)\bigr)
\rightarrow 
\bigoplus_{n \geq 0} \hochhom_\bullet(\sym^n \A)$$ 
is given by the sum over all $n,m \geq 0$ of the maps
\begin{equation}
\label{eqn-multiplication-in-hopf-algebra-structure-on-the-total-hh-of-symn}
\hochcx_\bullet(\sym^n \A) \otimes \hochcx_\bullet(\sym^m \A)
\xrightarrow{\ezmap}
\hochcx_\bullet(\sym^n \A \otimes \sym^m \A)
\xrightarrow{\Ind_{S_n \times S_m}^{S_{n+m}}}
\hochcx_\bullet(\sym^{n+m} \A). 
\end{equation}
\item The map 
$$\Delta'\colon
\bigoplus_{n \geq 0} \hochhom_\bullet(\sym^n \A)
\rightarrow 
\bigl(\bigoplus_{n \geq 0} \hochhom_\bullet(\sym^n \A)\bigr)
\otimes 
\bigl(\bigoplus_{n \geq 0} \hochhom_\bullet(\sym^n \A)\bigr)
$$ 
given by the sum over all $n \geq 0$ and all subsets
$I \subseteq \left\{ 1,\dots, n \right\}$ of the maps
\begin{small}
\begin{equation*}
\begin{tikzcd}
\hochcx_\bullet(\sym^n \A) 
\ar{r}{\Res_{S_{I} \times S_{\bar{I}}}^{S_{n}}}
&
\hochcx_\bullet\left(\A^{\otimes n} \rtimes (S_I \times S_{\bar{I}})\right)
\ar{d}{\simeq}
&
\\
&
\hochcx_\bullet(\sym^{|I|} \A \otimes \sym^{n-|I|} \A)
\ar{r}{\awmap}
&
\hochcx_\bullet(\sym^{|I|} \A) \otimes \hochcx_\bullet(\sym^{n-|I|} \A), 
\end{tikzcd}
\end{equation*}
\end{small}
where $\bar{I} =  \left\{ 1, \dots, n \right\} \setminus I$, 
where $S_I, S_{\bar{I}} < S_n$ 
are the subgroups that only permute 
the elements of $I$ and of $\bar{I}$, and where $|I|$ is the size of $I$. 
\item The maps $u'\colon \kk \rightarrow 
\bigoplus_{n \geq 0} \hochhom_\bullet(\sym^n \A)$
and $\epsilon'\colon 
\bigoplus_{n \geq 0} \hochhom_\bullet(\sym^n \A) \rightarrow \kk$ 
given by the inclusion of and the projection onto 
$\hochhom_\bullet(\sym^0 \A) \simeq \kk$, 
\item The map $H'\colon 
\bigoplus_{n \geq 0} \hochhom_\bullet(\sym^n \A) 
\rightarrow 
\bigoplus_{n \geq 0} \hochhom_\bullet(\sym^n \A)$
given on $\hochhom_\bullet(\sym^n \A)$ by $(-1)^n \id$. 
\end{itemize}
In particular, $\mu'$, $\Delta'$, $u'$, $\epsilon'$ and $H'$ define a
structure of a commutative Hopf algebra on 
$\bigoplus_{n \geq 0} \hochhom_\bullet(\sym^n \A)$. 
\end{theorem}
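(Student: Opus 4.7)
The plan is to verify each of the five identifications separately by tracing through the explicit formulas for $\zeta$ and $\eta$ from Theorem~\ref{theorem-total-hh-of-sym-stacks-iso-to-sym-algebra-of-hh-otimes-tkt}. The unit, counit and antipode identifications are essentially immediate once one observes that $\zeta$ respects the natural grading: the summand $\hochhom_\bullet(\sym^n \A)$ lands in the subspace of $S^*(\hochhom_\bullet(\A) \otimes t\kk[t])$ spanned by monomials of total $t$-exponent $n$, since every $t^{\underline{\lambda}}$ with $\underline{\lambda} \vdash n$ has $t$-exponent $\sum n_i = n$. For $n = 0$ both sides reduce to $\kk$ and $\zeta$ is the identity, giving $u \leftrightarrow u'$ and $\epsilon \leftrightarrow \epsilon'$. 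For the antipode, since both $H$ and $H'$ are determined by a sign depending only on the grading, the identification reduces to a check on a single generator, matching $(-1)^n$ on both sides.

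For the multiplication, I would verify the commutativity of the square with vertical maps $\zeta \otimes \zeta$ and $\zeta$ at the level of Hochschild complexes. The key combinatorial input is that for partitions $\underline{\mu} \vdash n$ and $\underline{\nu} \vdash m$, the $S_{n+m}$-conjugacy class of $\sigma_{\underline{\mu}} \times \sigma_{\underline{\nu}} \in S_n \times S_m < S_{n+m}$ is precisely $\underline{\mu} \sqcup \underline{\nu}$, and every $S_{n+m}$-conjugacy class arises this way from a well-defined multiset of splittings. I would unpack $\mu'$ on a basic Hochschild chain whose $\nu$-image lies in $\hochcx_\bullet(\A^{\otimes n}; \sigma_{\underline{\mu}}) \otimes \hochcx_\bullet(\A^{\otimes m}; \sigma_{\underline{\nu}})$: the Eilenberg-Zilber map assembles these into a twisted chain on $\A^{\otimes(n+m)}$ with twist $\sigma_{\underline{\mu}} \sqcup \sigma_{\underline{\nu}}$, and $\Ind_{S_n\times S_m}^{S_{n+m}}$ symmetrizes the result. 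Applying $\nu$, then Alexander-Whitney and $\bigotimes f_{n_i}$, the contribution in each piece $\Sym^{\underline{r}(\underline{\mu}\sqcup\underline{\nu})}\hochcx_\bullet(\A)\, t^{\underline{\mu}\sqcup\underline{\nu}}$ matches the symmetric-algebra product of the two contributions in $\zeta(\alpha) \vee \zeta(\beta)$ under the isomorphism \eqref{eqn-symmetric-algebra-decomposition}.

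For the comultiplication the argument is dual. A Hochschild chain in $\hochcx_\bullet(\A^{\otimes n}; \sigma_{\underline{\lambda}})$ has the property that $\Res_{S_I \times S_{\bar{I}}}^{S_n}$ is nonzero only when each cycle of $\sigma_{\underline{\lambda}}$ lies entirely in $I$ or entirely in $\bar{I}$. Summing over all subsets $I \subseteq \{1,\dots,n\}$ therefore groups the contributions by splittings $\underline{\lambda} = \underline{\mu} \sqcup \underline{\nu}$ with $\underline{\mu} \vdash |I|$ and $\underline{\nu} \vdash n - |I|$, each occurring with the correct multinomial multiplicity that counts the ways of distributing the parts of $\underline{\lambda}$ between $I$ and $\bar I$. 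After $\awmap$ and $\bigotimes f_{n_i}$ this reproduces exactly the formula for $\Delta$ applied to the corresponding monomial in $S^*(\hochhom_\bullet(\A)\otimes t\kk[t])$, once one uses the isomorphism \eqref{eqn-symmetric-algebra-decomposition} to rewrite it.

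The principal technical obstacle will be the careful bookkeeping of signs and combinatorial factors. Three specific issues will require attention: first, matching the Koszul signs $(-1)^p$ in $\Delta$ on $S^*$ to the signs introduced by $\awmap$ and by the rearrangements within the Hochschild complex of $\A^{\otimes n}$; second, verifying that the normalization $\tfrac{1}{|G|}$ appearing in the symmetrization \eqref{eqn-coinvariant-embedding-map-symmetriser} defining $\xi$ cancels correctly against the coset sums implicit in the induction and restriction functors and the coinvariant quotient under the centralizer in \eqref{eqn-choice-of-conjugacy-class-representative-iso}; and third, confirming that the permutation action of $S_{\underline{r}(\underline{\lambda})}$ on the tensor factors is preserved throughout by $\awmap$ and $\bigotimes f_{n_i}$, so that the passage to coinvariants produces the symmetric product $\vee$ on $S^*$. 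Once these identifications are established, the Hopf axioms for $(\mu',\Delta',u',\epsilon',H')$ follow automatically from those for $(\mu,\Delta,u,\epsilon,H)$ since $\zeta$ is an isomorphism.
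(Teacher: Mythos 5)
Your overall route is the same as the paper's: reduce to the graded pieces, treat $u$, $\epsilon$, $H$ as immediate, and verify $\mu$ and $\Delta$ by a chain-level comparison of $\zeta$ (built from $\nu$/$\xi$, the Eilenberg--Zilber/Alexander--Whitney maps, and $f_{n_i}$/$g_{n_i}$) with the maps $\mu'$, $\Delta'$. The gap is that the two compatibility statements on which the whole comparison rests are asserted rather than argued. First, you treat it as automatic that applying $\nu$ (for $S_{n+m}$) after $\Ind_{S_n\times S_m}^{S_{n+m}}\circ\ezmap$ agrees with applying $\ezmap$ after $\nu\otimes\nu$; this is precisely the nontrivial compatibility of the maps $\xi_g$/$\nu$ with the Eilenberg--Zilber maps, which the paper does not re-derive but imports as \cite[Lemma 5.17]{GyengeLogvinenko-TheHeisenbergAlgebraOfAVectorSpaceAndHochschildHomology} (and, for the coproduct, the corresponding diagram of \cite[Lemma 5.18]{GyengeLogvinenko-TheHeisenbergAlgebraOfAVectorSpaceAndHochschildHomology}); without it your claim that ``the contribution in each piece matches the symmetric-algebra product'' is exactly the statement to be proved, not a consequence of unpacking definitions. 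Second, your item about the normalisation $\tfrac{1}{|G|}$ in \eqref{eqn-coinvariant-embedding-map-symmetriser} ``cancelling against coset sums'' is not how the issue is resolved: the paper first rewrites $\xi$ so that the symmetriser is applied last, and then the relevant square commutes because the induced action of $G$ on $\hochhom_\bullet(\A\rtimes G)$ is trivial (\cite[Lemma 5.15]{GyengeLogvinenko-TheHeisenbergAlgebraOfAVectorSpaceAndHochschildHomology}), so that after passing to homology the symmetriser is the identity and interacts correctly with $\Ind_{S_n\times S_m}^{S_{n+m}}$, which on representables is just the inclusion $\A^{\otimes(n+m)}\rtimes(S_n\times S_m)\hookrightarrow\A^{\otimes(n+m)}\rtimes S_{n+m}$. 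A direct cancellation of $\tfrac1{|G|}$ against coset sums at the chain level is not available, and a proof that stops at ``this will require attention'' has not closed this point.

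A further soft spot is the comultiplication. The map $\Res_{S_I\times S_{\bar I}}^{S_n}$ on Hochschild complexes is not induced by a DG functor in the covariant direction, so you need to say what it is at the chain level before you can argue that its composition with $\awmap$ and $\otimes f_{n_i}$ reproduces $\Delta$; your cycle-splitting heuristic (each cycle of $\sigma_{\underline{\lambda}}$ lands in $I$ or $\bar I$, contributions grouped by $\underline{\lambda}=\underline{\mu}\sqcup\underline{\nu}$ with multinomial multiplicities) is the right picture, but it is exactly the content of the diagram the paper cites rather than something that follows from the formulas you have written down. In short: correct skeleton, same strategy as the paper, but the load-bearing compatibilities (EZ/AW versus $\nu$, $\xi$, the symmetriser, and $\Ind$/$\Res$) are left unproven, and one of them requires an input (triviality of the $G$-action on $\hochhom_\bullet(\A\rtimes G)$) that your plan does not identify.
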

\begin{proof}
The assertions about the unit map $u$, the counit map $\epsilon$, 
and the antipode map $H$ being identified with the maps $u'$,
$\epsilon'$, and the map $H'$ are trivial. 

For the multiplication $\mu$, in view of the isomorphism 
\eqref{eqn-symmetric-algebra-decomposition} it suffices to prove that
for any $m,n \geq 0$, for any $\underline{\nu} \vdash m$ and 
$\underline{\lambda} \vdash n$, and for any $x \in \Sym^{\underline{\nu}}
\hochhom_\bullet(\A)$ and $y \in \Sym^{\underline{\lambda}} \hochhom_\bullet(\A)$
we have 
\begin{equation}
\label{eqn-verification-of-multiplication-structures-coinciding}
\zeta(\mu(x t^{\underline{\nu}} \otimes y t^{\underline{\lambda}})) = 
\mu'(\zeta(x t^{\underline{\nu}}) \otimes \zeta (y t^{\underline{\lambda}}))). 
\end{equation}

One of the composants of $\zeta$, 
the map $\xi$ which was defined in \eqref{eqn-definition-of-the-map-xi}
to be 
$$
\Bigl(
\bigoplus_{ g\in G}
\hochcx_{\bullet}(\A;g)
\Bigr)_{G} 
\xrightarrow{\eqref{eqn-coinvariant-embedding-map-symmetriser}}
\bigoplus_{ g\in G}
\hochcx_{\bullet}(\A;g)
\xrightarrow{\sum_{g \in G} \xi_g}
\hochcx_\bullet(\A \rtimes G)
$$
can be rewritten as
$$
\Bigl(
\bigoplus_{ g\in G}
\hochcx_{\bullet}(\A;g)
\Bigr)_{G} 
\xrightarrow{\sum_{g \in G} \xi_g}
\hochcx_\bullet(\A \rtimes G)_{G}
\xrightarrow{\eqref{eqn-coinvariant-embedding-map-symmetriser}}
\hochcx_\bullet(\A \rtimes G). 
$$
In view of this, 
\eqref{eqn-verification-of-multiplication-structures-coinciding}
is equivalent to the commutation of the following diagrams:
\begin{tiny}
\begin{equation}
\label{eqn-verif-mult-struct-commutation-top-diagram}
\begin{tikzcd}
\left(
\hochhom_\bullet(\A)^{\otimes r(\underline{\nu})}
\right)_{S_{\underline{r}(\underline{\nu})}}
 \otimes 
\left(
\hochhom_\bullet(\A)^{\otimes r(\underline{\lambda})}
\right)_{S_{\underline{r}(\underline{\lambda})}}
\ar{d}{(\otimes g_{m_i}) \otimes (\otimes g_{n_j})}
\ar[->>]{r}
&
\left(
\hochhom_\bullet(\A)^{\otimes r(\underline{\nu} + \underline{\lambda})}
\right)_{S_{\underline{r}(\underline{\nu} + \underline{\lambda})}}
\ar{d}{(\otimes g_{m_i}) \otimes (\otimes g_{n_j})}
\\
\left(
\bigotimes_{\underline{\nu}}
\hochhom_\bullet(\A^{\otimes\bullet};t_{\bullet})_{t_{\bullet}}
\right)_{S_{\underline{r}(\underline{\nu})}}
\otimes 
\left(
\bigotimes_{\underline{\lambda}}
\hochhom_\bullet(\A^{\otimes\bullet};t_{\bullet})_{t_{\bullet}}
\right)_{S_{\underline{r}(\underline{\lambda})}}
\ar[->>]{r}
\ar{d}{\ezmap \otimes \ezmap}
& 
\left(
\bigotimes_{\underline{\nu} + \underline{\lambda}}
\hochhom_\bullet(\A^{\otimes\bullet};t_{\bullet})_{t_{\bullet}}
\right)_{S_{\underline{r}(\underline{\nu} + \underline{\lambda})}}
\ar{d}{\ezmap}
\\
\hochhom_\bullet(\A^{\otimes m}; \sigma_{\underline{\nu}})_{C(\sigma_{\underline{\nu}})}
\otimes 
\hochhom_\bullet(\A^{\otimes n}; \sigma_{\underline{\lambda}})_{C(\sigma_{\underline{\lambda}})}
\ar{r}{\ezmap}
&
\hochhom_\bullet(\A^{\otimes m + n};
\sigma_{\underline{\nu}}\times\sigma_{\underline{\lambda}})_{C(\sigma_{\underline{\nu}} \times \sigma_{\underline{\lambda}})}
\end{tikzcd}
\end{equation}
\begin{equation}
\label{eqn-verif-mult-struct-commutation-bottom-left-diagram}
\begin{tikzcd}
\hochhom_\bullet(\A^{\otimes m}; \sigma_{\underline{\nu}})_{C(\sigma_{\underline{\nu}})}
\otimes 
\hochhom_\bullet(\A^{\otimes n}; \sigma_{\underline{\lambda}})_{C(\sigma_{\underline{\lambda}})}
\ar{r}{\ezmap}
\ar{d}{\xi_{\sigma_{\underline{\nu}}}\otimes\xi_{\sigma_{\underline{\lambda}}}}
& 
\hochhom_\bullet(\A^{\otimes m + n};
\sigma_{\underline{\nu}}\times\sigma_{\underline{\lambda}})_{C(\sigma_{\underline{\nu}})
\times C(\sigma_{\underline{\lambda}})}
\ar{d}{\xi_{\sigma_{\underline{\nu}} \times \sigma_{\underline{\lambda}}}}
\\
\hochhom_\bullet(\sym^m \A)_{S_m}
\otimes 
\hochhom_\bullet(\sym^n \A)_{S_n}
\ar{r}{\ezmap}
\ar{d}{ \eqref{eqn-coinvariant-embedding-map-symmetriser} \otimes \eqref{eqn-coinvariant-embedding-map-symmetriser} }
&
\hochhom_\bullet(\sym^m \A \otimes \sym^n \A)_{S_m \times S_n}
\ar{d}{\eqref{eqn-coinvariant-embedding-map-symmetriser}}
\\
\hochhom_\bullet(\sym^m \A)
\otimes 
\hochhom_\bullet(\sym^n \A)
\ar{r}{\ezmap}
&
\hochhom_\bullet(\sym^m \A \otimes \sym^n \A)
\end{tikzcd}
\end{equation}
\begin{equation}
\label{eqn-verif-mult-struct-commutation-bottom-right-diagram}
\begin{tikzcd}
\hochhom_\bullet(\A^{\otimes m + n};
\sigma_{\underline{\nu}}\times\sigma_{\underline{\lambda}})_{C(\sigma_{\underline{\nu}})
\times C(\sigma_{\underline{\lambda}})}
\ar[->>]{r}
\ar{d}{\xi_{\sigma_{\underline{\nu}} \times \sigma_{\underline{\lambda}}}}
&
\hochhom_\bullet(\A^{\otimes m + n};
\sigma_{\underline{\nu}}\times\sigma_{\underline{\lambda}})_{C(\sigma_{\underline{\nu}} \times \sigma_{\underline{\lambda}})}
\ar{d}{\xi_{\sigma_{\underline{\nu}} \times \sigma_{\underline{\lambda}}}}
\\ 
\hochhom_\bullet(\sym^m \A \otimes \sym^n \A)_{S_m \times S_n}
\ar{r}{\Ind_{S_m \times S_n}^{S_{m+n}}}
\ar{d}{\eqref{eqn-coinvariant-embedding-map-symmetriser}}
&
\hochhom_\bullet(\sym^{m+n} \A)_{S_{m+n}}
\ar{d}{\eqref{eqn-coinvariant-embedding-map-symmetriser}}
\\
\hochhom_\bullet(\sym^m \A \otimes \sym^n \A)
\ar{r}{\Ind_{S_m \times S_n}^{S_{m+n}}}
&
\hochhom_\bullet(\sym^{m+n} \A)
\end{tikzcd}
\end{equation}
\end{tiny}
where:
\begin{itemize}
\item $\bigotimes_{\underline{\nu}}
\hochcx_\bullet(\A^{\otimes\bullet};t_{\bullet})_{t_{\bullet}}$ 
denotes $\bigotimes_{i=1}^{r(\underline{\nu})} 
\hochcx_\bullet(\A^{\otimes m_i};t_{m_i})_{t_{m_i}}$ 
where we use the unique ordering 
$m_1 \leq \dots \leq m_{r(\underline{\nu})}$ 
of the parts of $\underline{\nu}$ in the non-decreasing order. 
\item Two-headed arrows denote the quotient maps due to taking
the coinvariants by the action of a bigger group.
\item The arrows labeled $(\otimes g_{m_i}) \otimes (\otimes g_{n_j})$, 
$\ezmap$, $\xi_\bullet$ and $\xi_\bullet \otimes \xi_\bullet$
mean the images of the corresponding maps after taking 
the coinvariants as indicated. 
\end{itemize}

These diagrams fit into one big commutative diagram as:
\begin{center}
\begin{tabular}{|c|c|}
\hline
\multicolumn{2}{|c|}{\eqref{eqn-verif-mult-struct-commutation-top-diagram}}
\\
\hline
\eqref{eqn-verif-mult-struct-commutation-bottom-left-diagram}
&
\eqref{eqn-verif-mult-struct-commutation-bottom-right-diagram}
\\
\hline
\end{tabular}
\end{center}
In this big diagram the bottom-left half of the perimeter composes
to give the LHS of 
\eqref{eqn-verification-of-multiplication-structures-coinciding}, 
and the top-right half composes to give the RHS of
\eqref{eqn-verification-of-multiplication-structures-coinciding}.

The top square in \eqref{eqn-verif-mult-struct-commutation-top-diagram} 
commutes tautologically: the right vertical
arrow is by definition the unique arrow making this
square commute. The bottom square in 
\eqref{eqn-verif-mult-struct-commutation-top-diagram}
commutes by the associativity of the Eilenberg-Zilber map.   

The top square in
\eqref{eqn-verif-mult-struct-commutation-bottom-left-diagram}
commutes since the maps $\xi_\bullet$ are compatible with 
the Eilenberg-Zilber maps
\cite[Lemma
5.17]{GyengeLogvinenko-TheHeisenbergAlgebraOfAVectorSpaceAndHochschildHomology}.
The bottom square in 
\eqref{eqn-verif-mult-struct-commutation-bottom-left-diagram}
commutes since the symmetrizing maps
\eqref{eqn-coinvariant-embedding-map-symmetriser} are compatible
with the Eilenberg-Zilber maps: the former permute the columns
of the Hochschild chains of tensor powers of $\A$, while the latter
shuffle the rows while preserving the column positions. 

The top square in 
\eqref{eqn-verif-mult-struct-commutation-bottom-right-diagram}
commutes because the induction functor 
$$
\Ind_{S_m \times S_n}^{S_{m+n}}\colon 
\hperf\left(\A^{n+m} \rtimes (S_m \times S_n)\right)
\rightarrow 
\hperf\left(\A^{n+m} \rtimes S_{m+n} \right)
$$
restricts on the representables to the fully faithful embedding
$$
\A^{n+m} \rtimes (S_m \times S_n)
\hookrightarrow 
\A^{n+m} \rtimes S_{m+n}
$$ 
given by inclusion $S_m \times S_n < S_{m+n}$. The bottom square in 
\eqref{eqn-verif-mult-struct-commutation-bottom-right-diagram}
commutes because for any small DG category $\A$ with strong
action of a finite group $G$, 
the action of $G$ on $\A \rtimes G$ becomes trivial on 
the level of Hochschild homology 
\cite[Lemma
5.15]{GyengeLogvinenko-TheHeisenbergAlgebraOfAVectorSpaceAndHochschildHomology}.
Thus $\hochhom_\bullet(\A \rtimes G)_G = \hochhom_\bullet(\A \rtimes G)$
and so $\eqref{eqn-coinvariant-embedding-map-symmetriser}$ is the identity map.    

We thus conclude that the multiplication $\mu$ is identified by
the isomorphisms $\zeta$ with $\mu'$. A similar analysis reduces 
the statement that the comultiplication $\Delta$ is identified with 
$\Delta'$ to the commutation of the diagram in 
\cite[Lemma
5.18]{GyengeLogvinenko-TheHeisenbergAlgebraOfAVectorSpaceAndHochschildHomology}. 
\end{proof}

\subsection{$K$-theoretic parallels and $\lambda$-ring structure}
\label{section-k-theoretic-parallels-and-lambda-ring-structure}

Let $\A$ be a small DG category. Let $\A$ be equivalent,  
as an enhanced triangulated category, to the
bounded derived category $D(X)$ of coherent sheaves on a smooth 
projective variety $X$. That is -- there is
an exact equivalence $D_c(\A) \simeq D(X)$ induced by
a Morita equivalence of $\A$ with the standard DG enhancement of $X$
\cite[\S4.4]{GyengeKoppensteinerLogvinenko-TheHeisenbergCategoryOfACategory}
\cite{Toen-TheHomotopyTheoryOfDGCategoriesAndDerivedMoritaTheory} 
\cite{LuntsOrlov-UniquenessOfEnhancementForTriangulatedCategories}. 
By the global version of HKR theorem 
\cite[Cor 2.6]{Swan-HochschildCohomologyOfQuasiprojectiveSchemes},
\cite[Theorem 4.6.2]{Kontsevich-DeformationQuantizationOfPoissonManifolds}, 
\cite[Theorem 4.9]{Caldararu-TheMukaiPairingIITheHochschildKostantRosenbergIsomorphism},
we have $HH_\bullet(\A) \simeq \bigoplus_{p \geq 0}H^{\bullet - p}(X, \Omega^p)$. When $\kk =
\mathbb{C}$, by Poincar{\`e} Lemma we further have $HH_\bullet(\A) \simeq 
H^\bullet(X,\mathbb{C})$ and then by the Chern character isomorphism 
\cite[\S2.4]{AtiyahHirzebruch-VectorBundlesAndHomogeneousSpaces} 
we have an isomorphism
$HH_\bullet(\A) \simeq K^\bullet(X) \otimes \mathbb{C}$
of $\mathbb{Z}/2$-graded vector spaces,
where $K^\bullet(X)$ denotes the topological $K$-theory of $X$.

For any finite group $G$ acting on $X$, in \cite[Theorem
2]{AtiyahSegal-OnEquivariantEulerCharacteristics} Atiyah and Segal
gave a decomposition of the complexified equivariant topological 
$K$-theory in terms of the fixed point loci of $G$-action: 
\begin{equation}
\label{eqn-decomposition-of-equivariant-topological-k-theory}
K_G^\bullet(X) \otimes \mathbb{C} \simeq \bigoplus_{[g] \in G}
K^\bullet(X_g)_{C(g)} \otimes \mathbb{C},
\end{equation}
where the sum is taken over the representatives of the conjugacy
classes of $G$, $X_g$ denotes the fixed point locus of $g \in G$, 
and $C(g)$ denotes the centraliser of $g \in G$. The equivariant
$K$-theory $K^\bullet_G(X) \otimes \mathbb{C}$ can be viewed as the $K$-theory
of the global quotient stack $[X/G]$, and the decomposition 
\eqref{eqn-decomposition-of-equivariant-topological-k-theory} can be
viewed as an earlier $K$-theoretic analogue of the decomposition 
of the orbifold Hochschild homology (and, more generally,
cyclic homology) of $X$ by the second author
\cite[Theorem 1.1]{Baranovsky-OrbifoldCohomologyAsPeriodicCyclicHomology}
that this paper generalises to the noncommutative case.

Segal \cite[Prop.~1.4]{Segal-EquivariantKTheoryAndSymmetricProducts}
and Wang \cite[Theorem
3]{Wang-EquivariantKTheoryWreathProductsAndHeisenbergAlgebra} used the
decomposition \eqref{eqn-decomposition-of-equivariant-topological-k-theory}
to identify the total symmetric quotient stack $K$-theory of $X$ with 
a symmetric algebra:
\begin{equation}
\label{eqn-k-theoretic-symmetric-ring-structure-on-total-symm-k-theory}
\bigoplus_{n \geq 0} K^\bullet_{S_n}(X^n) \otimes \mathbb{C}
\simeq 
S^* \left(K^\bullet(X) \otimes_\mathbb{C} t\mathbb{C}[t]\right). 
\end{equation}
This is a $K$-theoretic analogue of our Theorem 
\ref{theorem-total-hh-of-sym-stacks-iso-to-sym-algebra-of-hh-otimes-tkt}
for Hochschild homology. Segal and Wang used it to give 
$\bigoplus_{n \geq 0} K^\bullet_{S_n}(X^n) \otimes \mathbb{C}$
the structure of the Fock space for the Heisenberg algebra of $K^\bullet(X)$. 
A Hochschild homology analogue of this Heisenberg algebra
action for our noncommutative setting is established in
\cite{GyengeLogvinenko-TheHeisenbergAlgebraOfAVectorSpaceAndHochschildHomology}.  

Wang also defined Hopf algebra and $\lambda$-ring structures on  
$\bigoplus_{n \geq 0} K^\bullet_{S_n}(X^n) \otimes \mathbb{C}$. The Hopf
algebra structure 
\cite[\S2]{Wang-EquivariantKTheoryWreathProductsAndHeisenbergAlgebra}
is the image under the isomorphism 
\eqref{eqn-k-theoretic-symmetric-ring-structure-on-total-symm-k-theory}
of the standard Hopf algebra structure of a symmetric algebra. 
It is a $K$-theoretic analogue of
our Theorem \ref{theorem-hopf-algebra-structure-on-the-total-hh-of-symn-a}
for the Hochschild homology. 
The $\lambda$-ring structure 
\cite[\S3]{Wang-EquivariantKTheoryWreathProductsAndHeisenbergAlgebra}
was constructed using isomorphism 
\eqref{eqn-k-theoretic-symmetric-ring-structure-on-total-symm-k-theory}
and was shown to be isomorphic to the free $\lambda$-ring generated by
$K^\bullet(X) \otimes \mathbb{C}$. 

In this section, we investigate whether there is an analogous
$\lambda$-ring structure on $\bigoplus_{n \geq 0} \hochhom(\sym^n \A)$
in our noncommutative setting. 

A $\lambda$-ring is a notion introduced by Grothendieck 
\cite[\S{I}.1-2]{Grothendieck-ClassesDeFaisceauxEtTheoremeDeRiemannRoch}
axiomatizing the structure defined on the Grothendieck ring $K_0(X)$
of vector bundles on $X$ by the operation of taking the exterior powers. 
So a $\lambda$-ring is a unital, commutative
ring $R$ equipped with a family of set-theoretic operations
$\lambda^i\colon R \rightarrow R$ for $i \geq 0$ satisfying
certain conditions 
\cite[\S{I}.1-2]{Grothendieck-ClassesDeFaisceauxEtTheoremeDeRiemannRoch}
\cite[Déf.~2.4]{Berthelot-GeneralitesSurLesLambdaAnneaux}. 
We also refer the reader to
\cite{Knutson-LambdaRingsAndTheRepresentationTheoryOfTheSymmetricGroup}, 
\cite{AtiyahTall-GroupRepresentationsLambdaRingsAndTheJHomomorphism}, 
and more modern \cite{Yau-LambdaRings}. A $\lambda$-ring structure can also be 
defined in terms of the \em Adams operations \rm $\psi^i\colon R
\rightarrow R$, and over a field of characteristic $0$
these can be any ring homomorphisms satisfying $\psi^1 = \id$ and $\psi^{m} \circ \psi^{n} = \psi^{mn}$,
see 
\cite[p.~49]{Knutson-LambdaRingsAndTheRepresentationTheoryOfTheSymmetricGroup}
\cite[Prop.~1.2]{Wilkerson-LambdaRingsBinomialDomainsAndVectorBundlesOverCPInfty}.

Given any $E \in \modk$, the symmetric DG algebra $S^*(E \otimes t\kk[t])$ 
has a natural $\lambda$-ring structure given by 
\begin{equation}
\label{eqn-free-lambda-ring-structure-on-symmetric-algebra-of-E-tkt}
\psi^{m}(v t^n) := v t^{mn} \quad \quad \quad \forall\; v \in E, n \geq 0. 
\end{equation}
With this structure, $S^*(E \otimes t\kk[t])$ is the \em free
$\lambda$-ring generated by $E$\rm. It satisfies the universal
property that for any other DG $\lambda$-ring $R$ over $\kk$ whose Adams
operations are $\modk$-homomorphisms, the DG $\lambda$-ring
homomorphisms $S^*(E \otimes t\kk[t]) \rightarrow R$ are in bijection
with $\modk$-morphisms $E \rightarrow R$. The bijection
is given by the restriction to $E \otimes t \subset S^*(E \otimes
t\kk[t])$. 

Thus the isomorphisms $\zeta$ and $\eta$ of Theorem 
\ref{theorem-total-hh-of-sym-stacks-iso-to-sym-algebra-of-hh-otimes-tkt} 
\begin{equation*}
\begin{tikzcd}
\bigoplus_{n \geq 0} \hochhom_\bullet(\sym^n \A)
\ar[shift left = 1.24ex]{r}{\zeta}[']{\simeq}
&
S^*(\hochhom_\bullet(\A) \otimes t \kk[t])
\ar[shift left = 1.25ex]{l}{\eta}
\end{tikzcd}
\end{equation*}
give the total Hochschild homology 
$\bigoplus_{n \geq 0} \hochhom_\bullet(\sym^n \A)$
the structure of a graded $\lambda$-ring with  multiplication 
\eqref{eqn-multiplication-in-hopf-algebra-structure-on-the-total-hh-of-symn}
and  Adams operations 
$$ \psi^m\colon \hochhom_\bullet(\sym^n \A) 
\rightarrow \hochhom_\bullet(\sym^{mn} \A). $$
For $n = 1$ they are given by the composition
$$ \hochcx_\bullet(\A) \xrightarrow{g_m}
\hochcx_\bullet(\A^{\otimes m}; t_m)_{t_m} 
\xrightarrow{\eqref{eqn-coinvariant-embedding-map-symmetriser}}
\hochcx_\bullet(\A^{\otimes m}; t_m)
\xrightarrow{\xi_{t_m}}
\hochcx_\bullet(\sym^m \A),$$
and analogously for higher $n$.  

The $K$-theoretic $\lambda$-ring construction in 
\cite[\S3]{Wang-EquivariantKTheoryWreathProductsAndHeisenbergAlgebra}
differs from our as follows. Unlike $HH_\bullet(\A)$, 
the topological $K$-theory $K^\bullet(X) \otimes \mathbb{C}$ has
has a natural $\lambda$-ring structure with the multiplication 
given by the tensor product of vector bundles and
$\lambda$-operations -- by the exterior powers. 
The corresponding Adams operations $\psi^m_{K(X)}\colon K^\bullet(X)
\rightarrow K^\bullet(X)$ are described using the decomposition 
\eqref{eqn-k-theoretic-symmetric-ring-structure-on-total-symm-k-theory}
as follows. 

Let $V$ be a vector bundle on $X$. Equip 
$V^{\boxtimes m} := V \boxtimes V \boxtimes \dots \boxtimes V$
on $X^m$ with the natural action of $S_m$ which acts
by permuting the factors. The assignment $V \mapsto V^{\boxtimes m}$
defines a map $\boxtimes m\colon K^\bullet(X) \rightarrow K^\bullet_{S_n}(X^n)$. 
Define 
\begin{small}
\begin{equation}
\label{eqn-defn-ktheoretic-adams-operations}
\psi_{K(X)}^m\colon K^\bullet(X) \otimes \mathbb{C} \xrightarrow{\boxtimes m} K^\bullet_{S_m}(X^m)
\otimes \mathbb{C} \simeq \sum_{\underline{\lambda} \vdash m} 
\Sym^{\underline{r}(\underline{\lambda})} K^\bullet(X) \otimes \mathbb{C} 
t^{\underline{\lambda}} \twoheadrightarrow K^\bullet(X) \otimes \mathbb{C} t^m,
\end{equation}
\end{small}
where the last map is the projection to the summand given by the
long cycle partition $(m)$ of $m$. Each 
$\psi_{K(X)}^m: K^\bullet(X) \otimes \mathbb{C} \rightarrow K^\bullet(X)
\otimes \mathbb{C}$ is an automorphism which is identified by the
Chern character with the automorphism of $H^*(X,\mathbb{C})$ which
acts by the multiplication by $m^{k}$ on $H^{2k}(X,\mathbb{C})$
and $H^{2k-1}(X,\mathbb{C})$.

The $\lambda$-ring structure constructed  
in \cite[\S3]{Wang-EquivariantKTheoryWreathProductsAndHeisenbergAlgebra}
on $\bigoplus_{n \geq 0} K^\bullet_{S_n}(X^n) \otimes \mathbb{C}$ is 
induced via the isomorphism 
$\bigoplus_{n \geq 0} K^\bullet_{S_n}(X^n) \otimes \mathbb{C}$
from the $\lambda$-ring structure on the symmetric algebra
$S^* \left(K^\bullet(X) \otimes_\mathbb{C} t\mathbb{C}[t]\right)$ 
whose Adams operations are defined by
$$ 
\psi^{m}(v t^n) := \psi^m_{K(X)}(v) t^{mn} \quad \quad \quad \forall\; 
v \in K^\bullet(X), n \geq 0. $$
Comparing this with the canonical Adams operations 
\eqref{eqn-free-lambda-ring-structure-on-symmetric-algebra-of-E-tkt} 
defining the free $\lambda$-ring generated by a graded
vector space, we see that the $\lambda$-ring structure constructed
on $S^* \left(K^\bullet(X) \otimes_\mathbb{C} t\mathbb{C}[t]\right)$ in
\cite[\S3]{Wang-EquivariantKTheoryWreathProductsAndHeisenbergAlgebra}
is not the canonical free one. However, since $\psi^m_{K(X)}$ are
automorphisms, it is isomorphic to the canonical one via the
$\lambda$-ring automorphism 
of $S^* \left(K^\bullet(X) \otimes_\mathbb{C} t\mathbb{C}[t]\right)$ 
which sends $K^\bullet(X) t^n$ to $K^\bullet(X) t^n$ via $\psi^n_{K(X)}$. 

As $\hochhom_\bullet(\A)$ has no natural $\lambda$-ring structure, 
we used the canonical Adams operations 
\eqref{eqn-free-lambda-ring-structure-on-symmetric-algebra-of-E-tkt} 
to define $\lambda$-ring structure on 
$S^*(\hochhom_\bullet(\A) \otimes t \kk[t])$, and hence on 
$\bigoplus_{n \geq 0} \hochhom_\bullet(\sym^n \A)$. Thus, 
in the commutative case, 
the HKR and Chern character isomorphism 
$\hochhom_\bullet(\A) \simeq K^\bullet(X) \otimes \mathbb{C}$ doesn't 
identify our $\lambda$-ring structure on 
$S^*(\hochhom_\bullet(\A) \otimes t \kk[t])$ with that
of 
\cite[\S3]{Wang-EquivariantKTheoryWreathProductsAndHeisenbergAlgebra}
on $S^* \left(K^\bullet(X) \otimes_\mathbb{C} t\mathbb{C}[t]\right)$. 
But could it be that the induced $\lambda$-ring structures on 
$\bigoplus_{n \geq 0} \hochhom_\bullet(\sym^n \A)$ and 
$\bigoplus_{n \geq 0} K^\bullet_{S_n}(X^n) \otimes \mathbb{C}$
are identified by the isomorphisms 
$\hochhom_\bullet(\sym^n \A) \simeq K^\bullet_{S_n}(X^n) \otimes \mathbb{C}$
cooked up using the orbifold HKR isomorphism of 
\cite[Theorem 1.15]{ArinkinCaldararuHablicsek-FormalityOfDerivedIntersectionsAndTheOrbifoldHKRIsomorphism}?

It is not an oversight that our $\lambda$-ring structure 
is not a noncommutative analogue of the one in
\cite[\S3]{Wang-EquivariantKTheoryWreathProductsAndHeisenbergAlgebra}. 
If it were, we would have constructed a noncommutative analogue on
$\hochhom_\bullet(\A)$ of the Adams operations $\psi^m_{K(X)}$ on 
$K^\bullet(X) \otimes \mathbb{C}$. Since the eigenspaces of $\psi^m_{K(X)}$
recover on $K^\bullet(X) \otimes \mathbb{C}$ 
the cohomological grading of $H^*(X,\mathbb{C})$, 
we would have constructed a noncommutative Hodge decomposition 
$H^{i,j}$ on $\hochhom_\bullet(\A)$. 

Curiously, though, we now have noncommutative analogues
for the classes in $\hochhom_0(\A)$ of the objects of $\A$
of both steps in the definition \eqref{eqn-defn-ktheoretic-adams-operations}.
The assignment $V \mapsto V^{\boxtimes m}$ 
corresponds to sending $a \in \A$ to the
$\hochhom_0$ class of the symmetric idempotent $\sum_{\sigma \in S_m}
\sigma$ of the object $a^{\otimes m}$ in $\sym^m \A$. 
This is precisely the Euler class of the $\sym^m \A$-module 
corresponding to the representable $\A^{\otimes m}$-module $a^{\otimes m}$
equipped with the $S_n$-equivariant structure which permutes its factors. 

On the other hand, the projection $K^\bullet_{S_m}(X^m) \otimes \mathbb{C}
\rightarrow K^\bullet(X) \otimes \mathbb{C}$ corresponds in our case to the composition 
\begin{align}
\label{eqn-projection-from-sym-m-to-A} 
\hochcx_\bullet(\sym^m \A) \xrightarrow{\nu}
\hochcx_\bullet(\A^{\otimes m};t_m)_{t_m} \xrightarrow{f_m}
\hochcx_\bullet(\A). 
\end{align}
This sends the symmetric idempotent 
$\sum_{\sigma \in S_m} \sigma$ of the object $a^{\otimes m}$ 
first to $\id_{a^{\otimes m}}$ in 
$\hochcx_\bullet(\A^{\otimes m};t_m)_{t_m}$, and then to 
$\id_a \in \hochcx_\bullet(\A)$. 

Thus replicating in the noncommutative case the steps
of the $K$-theoretic Adams operation $\psi_m$ produces
a trivial operation on the $\hochhom_0$ classes of the objects of $\A$. 
This agrees with our definition of the $\lambda$-structure on 
$S^*(\hochhom_\bullet(\A) \otimes t \kk[t])$ above. 
It is interesting whether the noncommutative analogue of 
the assignment $V \mapsto V^{\boxtimes m}$ can be extended to 
the whole of $\hochhom_\bullet(\A)$ and whether its composition 
with the projection \eqref{eqn-projection-from-sym-m-to-A} 
would be the identity map. 

\bibliography{references}
\bibliographystyle{amsalpha}
\end{document}